\numberwithin{equation}{section} %
\newtheorem{thm}{Theorem}[section]
\newtheorem{defn}{Definition}[section]
\newtheorem{prop}{Proposition}[section]
\newtheorem{lem}{Lemma}[section]
\newtheorem{rem}{Remark}[section]
\theoremstyle{definition}
\newtheorem{assump}{Assumption}[section]
\newcommand{\brac}[1]{\left(#1\right)}
\newcommand{\abs}[1]{\left\vert#1\right\vert}
\newcommand{\norm}[1]{\left\Vert#1\right\Vert}
\newcommand{\ie}{{\it{i.e.}}}
\newcommand{\diag}{\mbox{diag}}
\newcommand{\diff}{\mbox{d}}
\begin{document}

\baselineskip=2pc

\begin{center}
{\bf \large 
Uniform accuracy of implicit-explicit Runge-Kutta methods for linear hyperbolic relaxation systems}
\end{center}

\vspace{.2in}

\centerline{
Zhiting Ma \footnote{Beijing Institute of Mathematical Sciences and Applications, Beijing 101408, China. E-mail: mazt@bimsa.cn}
\qquad
Juntao Huang \footnote{Department of Mathematics and Statistics, Texas Tech University, Lubbock, TX, 79409, USA. E-mail: juntao.huang@ttu.edu. Research is partially supported by NSF DMS-2309655 and DOE DE-SC0023164.}
}

\vspace{.2in}

\vspace{.2in}

\centerline{\bf Abstract}
In this paper, we study the uniform accuracy of implicit-explicit (IMEX) Runge-Kutta (RK) schemes for general linear hyperbolic relaxation systems satisfying the structural stability condition proposed in \cite{yong_singular_1999}. We establish the uniform stability and accuracy of a class of IMEX-RK schemes with spatial discretization using a Fourier spectral method. Our results demonstrate that the accuracy of the fully discretized schemes is independent of the relaxation time across all regimes. Numerical experiments on applications in traffic flows and kinetic theory verify our theoretical analysis.

\vfill

\newpage

\section{Introduction}
\setcounter{equation}{0}
\setcounter{figure}{0}
\setcounter{table}{0}

In this paper, we consider the uniform stability and uniform accuracy of a class of implicit-explicit (IMEX) Runge-Kutta (RK) schemes for one-dimensional linear hyperbolic relaxation systems
\begin{equation}\label{eq:PDE-general}
	\bm{U}_t + \bm{A} \bm{U}_{x} = \frac{1}{\varepsilon}\bm{Q} \bm{U}.
\end{equation}
Here $\bm{U}=\bm{U}(x,t)\in\mathbb{R}^m$, $x \in\mathbb{R}$, $t\geq 0$, $\bm{A} $ and $\bm{Q}$ are two $m\times m$ constant matrices, the subscripts $t$ and $x$ refer to the partial derivatives with respect to $t$ and $x$. The parameter $\epsilon > 0$ is a small value that denotes the relaxation time. 

Stiff systems of the linearized version of such differential equations describe several physical phenomena of great importance in applications.
Examples include kinetic theories (moment closure systems \cite{levermore1996moment,Di2017nm}, discrete-velocity kinetic models \cite{broadwell1964shock,platkowski1988discrete}), nonlinear optics \cite{hanouzet2000approximation}, radiation hydrodynamics \cite{pomraning2005equations,mihalas2013foundations}, traffic flows \cite{aw2000siam}, dissipative relativistic fluid flows \cite{geroch1990dissipative}, chemically reactive flows \cite{giovangigli2012multicomponent}, invisicid gas dynamics with relaxation \cite{zeng1999gas}, etc.

Developing efficient numerical schemes for such systems is challenging, since in many applications the relaxation time varies widely --- from values of order one to values much smaller than the time scale determined by the characteristic speeds of the system. In the latter case, the hyperbolic system with relaxation is said to be stiff. It is typically very difficult to split the problem into separate regimes and to use different solvers in the stiff and non-stiff regions. Recently developed implicit-explicit schemes, including IMEX-RK method (e.g.\cite{ascher1997,kennedy2003additive,dimarco2013siam,pareschi2005jsc}) and IMEX backward differentiation formulas (IMEX-BDF, e.g \cite{ascher1995siam,hundsdorfe2007imex,dimarco2017siam,albi2020implicit-explicit}) method overcome this difficulty, providing basically the same advantages of the splitting schemes, without the limitation of the time step is of order $O(\varepsilon)$.
Additionally, the formalism of the IMEX method can guarantee the order of accuracy when $\varepsilon$ is of order 1 and also when $\varepsilon\ll 1$ (the so-called asymptotic preserving property \cite{jin1999efficient-AP,hu2017asymptotic}). However, in the intermediate regime, many numerical experiments indicate that IMEX-RK schemes often suffer from order reduction  \cite{boscarino2007error-analysis,hu_uniform_2019,hundsdorfe2007imex,dimarco2017siam,albi2020implicit-explicit}. 

For the Jin-Xin model \cite{jin1995cpam} as a specific relaxation system, the uniform stability and accuracy have been studied in \cite{hu_uniform_2019} for the IMEX-BDF schemes and in \cite{hu_uniform_2023} for the IMEX-RK schemes.
In our previous work \cite{MHY2023}, we prove the uniform stability and accuracy of a class of IMEX-BDF schemes discretized spatially by a Fourier spectral method, which illustrates the accuracy of the fully discretized IMEX-BDF schemes is independent of the relaxation time in all regimes.

In this work, we investigate the uniform accuracy of the IMEX-RK schemes for linear hyperbolic relaxation systems \eqref{eq:PDE-general}. Our main contribution is to extend the analysis in \cite{hu_uniform_2019}, which is limited to the Jin–Xin model of size $2 \times 2$, to more general hyperbolic relaxation systems of arbitrary size that satisfy the structural stability condition proposed in \cite{yong_singular_1999}. Since the structural stability condition is tacitly respected by many well-established physical theories \cite{yong_singular_1999,liu2001basic,Yong2008}, our analysis is expected to have broader applicability.

Furthermore, we emphasize that this extension is highly non-trivial. First, the energy estimates must be carried out in a weighted norm induced by the system's symmetrizer, rather than in a standard norm. Second, the error estimate in \cite{hu_uniform_2019} relies on the introduction of the auxiliary error vector to address the low stage order of intermediate stages. Generalizing this technique from Jin-Xin model to hyperbolic relaxation system of arbitrary size \eqref{eq:PDE-general} presents significant challenges, as the numerical solution involves vectors of varying dimensions across stages. Consequently, the analysis must be conducted at the matrix level. To deal with this difficulty, we employ the Kronecker product and the vectorization operator, which requires numerous intricate calculations, see Section \ref{subsec:4-2} for details.

We also mention the difference between this work and our previous work \cite{MHY2023} which focused on IMEX-BDF schemes, a class of multistep method. The key technique in \cite{MHY2023} is the energy estimate with a new multiplier technique. In contrast, our current work deals with IMEX-RK schemes, where the main difficulty lies in handling the multiple intermediate stages of the solution within each time step.

The rest of the paper is organized as follows. In Section \ref{sec:preliminaries}, we recall the structural stability condition, the regularity of the solution, and a class of IMEX-RK schemes for system \eqref{eq:PDE-general}. In Section \ref{sec:uniform-stability}, we establish the uniform stability of a class of IMEX-RK schemes. We prove rigorously uniform second-order accuracy and uniform third-order accuracy of the IMEX-RK schemes in Section \ref{sec:2nd-uniform-accuracy} and Section \ref{sec:3rd-uniform-accuracy}. Numerical experiments are presented in Section \ref{sec:numerical-test} to validate our theoretical findings.

\section{Preliminaries}\label{sec:preliminaries}
In this section, we introduce the structural stability condition \cite{yong_singular_1999}, the regularity of the solution which has been proved in \cite{MHY2023}, and a class of IMEX-RK schemes for system \eqref{eq:PDE-general}.

\subsection{Structural Stability Condition}\label{subsec:ssc}

For system \eqref{eq:PDE-general}, the structural stability condition \cite{yong_singular_1999,Yong2008} reads as
\begin{enumerate}[(i)]
	\item There is an invertible $m\times m$ matrix $\bm{P}$ and an invertible $r\times r$ $(0<r\le m)$ matrix $\hat{\bm{S}}$ such that
	\begin{equation}\nonumber
		\bm{P} \bm{Q} = 
		\left(
		\begin{array}{cc}
			0 & 0 \\
			0 & \hat{\bm{S}}
		\end{array}
		\right)
		\bm{P}.
	\end{equation}

    \item \label{item:stability-2} There exists a symmetric positive-definite (SPD) matrix $\bm{A_0}$ such that
	\begin{equation}\nonumber
		\bm{A_0} \bm{A}  = \bm{A}^T  \bm{A_0}.
	\end{equation}

	\item The hyperbolic part and the source term are coupled in the sense:
	\begin{equation}\nonumber
		\bm{A_0}  \bm{Q}  + \bm{Q}^T  \bm{A_0}  \le -\bm{P}^T 
		\left(
		\begin{array}{cc}
			0 & 0 \\
			0 & \bm{I_r}
		\end{array}
		\right)
		\bm{P} .
	\end{equation}
\end{enumerate}
Here the superscript $T$ denotes the transpose and $\bm{I}_r$ is the unit matrix of order $r$.

About this set of conditions, we remark as follows. 
Condition (i) is classical for initial-value problems of systems of ordinary differential equations (ODE, spatially homogeneous systems). Condition (ii) means the symmetrizable hyperbolicity of the system of first-order partial differential equations (PDE) in \eqref{eq:PDE-general}. Condition (iii) characterizes a kind of coupling between the ODE and PDE parts. 
As shown in \cite{yong_singular_1999,liu2001basic,Yong2008}, the structural stability condition has been tacitly respected by many well-developed physical theories. 
Recently, it has been shown in \cite{Di2017nm,zhao2017stability,ma2023nonrelativisti} to be proper for certain moment closure systems. 
Under the structural stability condition, the existence and stability of the zero relaxation limit of the corresponding initial-value problems have been established in \cite{yong_singular_1999}.

Assuming the structural stability condition, we introduce $\tilde{\bm{U}} := \bm{P} \bm{U}$
and transform system \eqref{eq:PDE-general} into its equivalent version
\begin{equation}\nonumber
	\tilde{\bm{U}}_t + \tilde{\bm{A}} \tilde{\bm{U}}_{x} = \frac{1}{\varepsilon}
	\left(
	\begin{array}{cc}
		0 & 0 \\
		0 & \hat{ \bm{S} }
	\end{array}
	\right)
	\tilde{\bm{U}},
\end{equation}
where $\tilde{\bm{A}} := \bm{P} \bm{A} \bm{P}^{-1}$.
It is easy to see that the above equivalent version satisfies the structural stability condition with $\tilde{\bm{P}}=\bm{I}_m$ and $\tilde{\bm{A}}_0 = \bm{P}^{-T}\bm{A_0}\bm{P}^{-1}$. 
Thus, throughout this paper we only consider the transformed version (drop the tilde)
\begin{equation}\label{eq:PDE}
	\bm{U}_t + \bm{A} \bm{U}_{x} = \frac{1}{\varepsilon}\left(
\begin{array}{cc}
	0 & 0 \\
	0 & \hat{\bm{S}}
\end{array}
\right) \bm{U} \equiv \frac{1}{\varepsilon} \bm{Q} \bm{U} .
\end{equation}

It was proved in \cite{yong_singular_1999} (Theorem 2.2) that $\bm{P}^{-T}\bm{A_0} \bm{P}^{-1}$ is a block-diagonal matrix (with the same partition as in (i) and (iii)).
Thus, the symmetrizer for \eqref{eq:PDE} has the following block-diagonal form
\begin{equation}\nonumber
\bm{A}_0 =
\left(
\begin{array}{cc}
	\bm{A}_{01} & 0 \\
	0 & \bm{A}_{02}
\end{array}
\right).
\end{equation}
We further assume that $\bm{A}_{02}\hat{\bm{S}}$ is symmetric (negative-definite), which holds true for many physical models \cite{Yong2008}.

\subsection{Regularity result}\label{seubsec:regularity-result}
We state the following results which have been proved in our previous work in Theorem 3.2 in \cite{MHY2023}. 
\begin{lem}[\cite{MHY2023}]\label{thm:regularity-const}
	For any integer $s\ge0$, the solution to \eqref{eq:PDE} satisfies
	\begin{enumerate}
		\item 
		for all $t\ge 0$,
		\begin{equation}\label{eq:regularity-Hs}
			\norm{\bm{U}(\cdot,t)}_{H^s}^2 \le C \norm{\bm{U}(\cdot,0)}_{H^s}^2,
		\end{equation}
		
		\item
		for all $t\ge  2\delta_0^{-1} s\varepsilon\log(1/\varepsilon)$,
		\begin{equation}\label{eq:regularity-U}
			\norm{\partial_t^{r_1}\partial_x^{r_2}\bm{U}(\cdot,t)}^2 \le C \norm{\bm{U}(\cdot,0)}_{H^s}^2, \quad r_1+{r_2}\le s
		\end{equation}
		and
		\begin{equation}\label{eq:regularity-W}
			\norm{\partial_t^{r_1}\partial_x^{r_2}\bm{W}(\cdot,t)}^2 \le C \varepsilon^2 \norm{\bm{U}(\cdot,0)}_{H^s}^2, \quad r_1+{r_2}\le s-1.
		\end{equation}	
  Here $\delta_0>0$ is a constant determined by the SPD matrices $\bm{A_{02}}$ and $\bm{A_{02}}\hat{\bm{S}}$, $C$ is a generic constant independent of $\varepsilon$, $\bm{U}=\begin{pmatrix}
      \bm{V}\\\bm{W}
  \end{pmatrix}$ with $\bm{V} \in\mathbb{R}^{m-r}$ and $\bm{W} \in\mathbb{R}^{r}$, and $r_1, r_2$ are non-negative integers.
	\end{enumerate}
\end{lem}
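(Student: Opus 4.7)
The plan is to reduce \eqref{eq:PDE} to Fourier modes, write
\begin{equation*}
\partial_t \hat{\bm U}(\xi,t) = \bigl(-\mathrm{i}\xi\bm A + \varepsilon^{-1}\bm Q\bigr)\hat{\bm U}(\xi,t),
\end{equation*}
and derive energy estimates mode-by-mode using the symmetrizer $\bm A_0$. Throughout, I would use the block structure $\bm Q = \diag(0,\hat{\bm S})$, $\bm A_0 = \diag(\bm A_{01},\bm A_{02})$ and the splitting $\hat{\bm U}=(\hat{\bm V},\hat{\bm W})^T$ from Section~\ref{subsec:ssc}.

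For \eqref{eq:regularity-Hs}, the natural quantity is the weighted energy $E_s(t)=\sum_\xi (1+|\xi|^{2s})\hat{\bm U}^*\bm A_0\hat{\bm U}$, which is equivalent to $\|\bm U(\cdot,t)\|_{H^s}^2$ since $\bm A_0$ is SPD. Differentiating in $t$, condition (ii) makes the convective contribution $\hat{\bm U}^*(\bm A_0\bm A-\bm A^T\bm A_0)\hat{\bm U}$ vanish, while condition (iii) gives $\hat{\bm U}^*(\bm A_0\bm Q+\bm Q^T\bm A_0)\hat{\bm U}\le -|\hat{\bm W}|^2$. Hence $\dot E_s\le 0$ and \eqref{eq:regularity-Hs} follows at once; as a by-product, $\int_0^t \|\bm W\|^2\,ds$ is controlled independently of $\varepsilon$, which will feed into the next step.

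For \eqref{eq:regularity-W}, the idea is to exploit the fast dissipation in the $\hat{\bm W}$-equation. Because $\bm A_{02}\hat{\bm S}$ is symmetric negative-definite, there exists $\delta_0>0$ with $\bm A_{02}\hat{\bm S}+\hat{\bm S}^T\bm A_{02}\le -2\delta_0\bm A_{02}$. Writing the $\hat{\bm W}$-component of the ODE as
\begin{equation*}
\partial_t\hat{\bm W}= \varepsilon^{-1}\hat{\bm S}\hat{\bm W} + (\text{coupling through }\bm A),
\end{equation*}
and pairing with $\bm A_{02}\hat{\bm W}$ yields a Gr\"onwall inequality with contraction rate $\delta_0/\varepsilon$. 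The homogeneous part therefore decays like $e^{-\delta_0 t/\varepsilon}$, so that after $t\ge 2\delta_0^{-1}\varepsilon\log(1/\varepsilon)$ it is already smaller than $\varepsilon^2$, while Duhamel's forcing term is controlled by \eqref{eq:regularity-Hs}; this produces the $O(\varepsilon)$ bound on $\bm W$.

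Mixed derivatives in \eqref{eq:regularity-U} and \eqref{eq:regularity-W} are obtained by bootstrap. Pure $x$-derivatives follow from repeating the $H^s$ argument on $\partial_x^{r_2}\bm U$. A time derivative is traded for $\partial_t=-\bm A\partial_x+\varepsilon^{-1}\bm Q$, where the dangerous $\varepsilon^{-1}$ only multiplies $\hat{\bm S}\bm W$, which is already $O(\varepsilon)$ once the $\bm W$-transient has elapsed. Iterating this trade $r_1$ times and reapplying the fast-relaxation estimate to each freshly differentiated $\bm W$-component extends the transient by one more $\varepsilon\log(1/\varepsilon)$ per level, accumulating to the total $2\delta_0^{-1}s\varepsilon\log(1/\varepsilon)$. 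The main obstacle is precisely this bootstrap: at each differentiation, the off-diagonal blocks of $\bm A$ generate new coupling terms mixing $\bm V$ and $\bm W$, and each must be closed by the right combination of the $H^s$ estimate and the fast-relaxation estimate while keeping every constant independent of $\varepsilon$. The careful book-keeping of these coupling terms is what forces the $s$-dependent waiting time in the statement.
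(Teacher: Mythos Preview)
The paper does not actually prove this lemma; Section~\ref{seubsec:regularity-result} merely quotes it as a result already established in the authors' earlier work \cite{MHY2023}. Your outlined strategy---Fourier-mode energy estimates with the symmetrizer $\bm A_0$ for \eqref{eq:regularity-Hs}, fast-relaxation decay of $\hat{\bm W}$ via the damping $\bm A_{02}\hat{\bm S}+\hat{\bm S}^T\bm A_{02}\le -2\delta_0\bm A_{02}$ and Duhamel for \eqref{eq:regularity-W}, followed by a bootstrap trading $\partial_t$ for $-\bm A\partial_x+\varepsilon^{-1}\bm Q$ to reach the mixed derivatives---is the standard route for linear relaxation systems under the structural stability condition and is essentially what is carried out in \cite{MHY2023}. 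In particular, your accounting of the $s$-fold iteration producing the waiting time $2\delta_0^{-1}s\varepsilon\log(1/\varepsilon)$ matches the statement.
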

Here and below, $\norm{\cdot}$ denotes the $L^2$ norm, and $\norm{\cdot}_{H^s}$ denotes the standard Sobolev norm in the space $H^s$.

\subsection{IMEX-RK method}\label{esc:IMEX-RK-method}
 
An IMEX-RK scheme applied to the system \eqref{eq:PDE} employs an explicit approach to the non-stiff convection term and an implicit one for addressing the stiff relaxation term \cite{pareschi2005jsc}
\begin{equation}\label{scheme:IMEX-RK}
	\begin{aligned}
		&\bm{U}^{(i)} = \bm{U}^{n} - \Delta t \sum_{j=1}^{i-1} \tilde{h}_{ij} \bm{A} \partial_x \bm{U}^{(j)} + \frac{\Delta t}{\varepsilon}\sum_{j=1}^{i}h_{ij} \bm{Q} \bm{U}^{(j)}, \qquad i = 1, \ldots, s, \\
		&\bm{U}^{n+1} = \bm{U}^{n} - \Delta t \sum_{j=1}^{s} \tilde{b}_{j} \bm{A} \partial_x \bm{U}^{(j)} + \frac{\Delta t}{\varepsilon}\sum_{j=1}^{s}b_{j} \bm{Q} \bm{U}^{(j)}.
	\end{aligned}
\end{equation}
Here $\bm{U}^{n}$ denotes the numerical solution at the time $t^n=n\Delta t$ where $\Delta t$ is the time step size. $s$ is the number of stages. The matrix $\tilde{\bm{H}} = (\tilde{h}_{ij}) \in \mathbb{R}^{s\times s}$ is strictly low-triangular (i.e., $\tilde{h}_{ij}=0$ and $j\geq i$), $\bm{H} = (h_{ij}) \in \mathbb{R}^{s\times s}$ is low-triangular (i.e., $h_{ij}=0$, $j > i$). 
With the vectors $\tilde{\bm{b}} = (\tilde{b}_1, \ldots, \tilde{b}_s)^T$ and $\bm{b} = (b_1, \ldots, b_s)^T$, they can be represented by a double Butcher tableau: 
\begin{equation}\label{tableau:IMEX_RK}
	\begin{tabular}{l|c}
		$\bm{\tilde{c}}$ & $\tilde{\bm{H}}$\\ \hline 
		 & $\tilde{\bm{b}}^T$
	\end{tabular},\qquad 
	\begin{tabular}{l|c}
		$\bm{c}$ & $\bm{H}$\\ \hline
		 & $\bm{b}^T$
	\end{tabular}.
\end{equation}
Here, the vectors $\tilde{\bm{c}} = (\tilde{c}_1, \ldots, \tilde{c}_s)^T$ and $\bm{c} = (c_1, \ldots, c_s)^T$ are define by 
\begin{equation}\nonumber
	\tilde{c}_i = \sum_{j=1}^{i-1}\tilde{h}_{ij}, \qquad c_i = \sum_{j=1}^{i} h_{ij}.
\end{equation}

We assume that $\bm{H}$ has non-negative diagonal elements (i.e., $h_{ii}\ge0$ for $i=1,\ldots,s$)  since this guarantees the solvability of the numerical solution for any $\Delta t>0$ when applied to the linear test equation $y'=\lambda y$ with $\operatorname{Re}(\lambda) < 0$ \cite{prothero1974stability}. It is easy to show that this condition also implies that $\bm{U}^{(i)}$ for $i=1,\ldots,s$ given by \eqref{scheme:IMEX-RK} is well-defined under the structural stability condition in Section \ref{subsec:ssc}.

The tableau \eqref{tableau:IMEX_RK} must satisfy the following standard order conditions \cite{pareschi2005jsc}:
\begin{description}
    \item \textbf{First Order}
\begin{equation}\label{equ:cond-1st-rk}
    \sum_{i=1}^s \tilde{b}_i= \sum_{i=1}^s b_i=1 .
\end{equation}
\item \textbf{Second Order}
\begin{equation}\label{equ:cond-2nd-rk}
    \sum_{i=1}^s \tilde{b}_i \tilde{c}_i= \sum_{i=1}^s b_i c_i=\sum_{i=1}^s \tilde{b}_i c_i=  \sum_{i=1}^s b_i \tilde{c}_i=\frac{1}{2}.
\end{equation}
\item \textbf{Third Order}
\begin{equation}\label{equ:cond-3rd-rk}
    \begin{aligned}
        &\sum_{i,j=1}^s \tilde{b}_i \tilde{h}_{i j} \tilde{c}_j=\sum_{i,j=1}^s b_i h_{i j} c_j=\frac{1}{6}, \quad \sum_{i=1}^s \tilde{b}_i \tilde{c}_i \tilde{c}_i=\sum_{i=1}^s b_i c_i c_i=\frac{1}{3},\\
        &\sum_{i,j=1}^s \tilde{b}_i \tilde{h}_{i j} c_j= \sum_{i,j=1}^s \tilde{b}_i h_{i j} \tilde{c}_j=\sum_{i,j=1}^s \tilde{b}_i h_{i j} c_j=\frac{1}{6}, \\
        &\sum_{i,j=1}^s b_i \tilde{h}_{i j} c_j=\sum_{i,j=1}^s b_i h_{i j} \tilde{c}_j=\sum_{i,j=1}^s b_i \tilde{h}_{i j} \tilde{c}_j=\frac{1}{6}, \\
        &\sum_{i=1}^s \tilde{b}_i c_i c_i=\sum_{i=1}^s \tilde{b}_i \tilde{c}_i c_i=\sum_{i=1}^s b_i \tilde{c}_i \tilde{c}_i=\sum_{i=1}^s b_i \tilde{c}_i c_i=\frac{1}{3} .
    \end{aligned}
\end{equation}
\end{description}

In this paper, we restrict our study to the IMEX-RK schemes of type CK \cite{kennedy2003additive} and with implicitly-stiffly-accurate (ISA) property where the definitions are given as follows:
\begin{defn}[Type CK and Type ARS]\label{defn:CK-ARS}
    The IMEX-RK method given by \eqref{tableau:IMEX_RK} is of type CK if the matrix $\bm{H}$ can be written as
$$
\left(\begin{array}{cc}
0 & 0 \\
\bm{h} & \hat{\bm{H}}
\end{array}\right),
$$
where the vector $\bm{h} \in \mathbb{R}^{s-1}$ and the submatrix $\hat{\bm{H}} \in \mathbb{R}^{(s-1) \times(s-1)}$ is invertible. In particular,   if $\bm{h}=\bm{0}$, $b_1=0$, the scheme is of type ARS.
\end{defn}
\begin{defn}[ISA and GSA]\label{defn:ISA-GSA}
    The IMEX-RK method given by \eqref{tableau:IMEX_RK} is  implicitly-stiffly-accurate (ISA) if $h_{s i}=b_i$, $i=1, \ldots, s$. 
    If further $\tilde{h}_{s i}=\tilde{b}_i$, $i=1, \ldots, s$,  the scheme is said to be globally stiffly accurate (GSA).
\end{defn}

\subsection{Spatial discretization}

We consider the system \eqref{eq:PDE} with periodic boundary conditions. The Fourier-Galerkin spectral method is applied to the semi-discretized IMEX-RK scheme \eqref{scheme:IMEX-RK-1} in the spatial domain:
\begin{equation}\label{scheme:IMEX-RK-1}
	\begin{aligned}
		&(\bm{U}^{(i)})_N = (\bm{U}^{n})_N - \Delta t \sum_{j=1}^{i-1} \tilde{h}_{ij} \bm{A} \partial_x (\bm{U}^{(j)})_N + \frac{\Delta t}{\varepsilon}\sum_{j=1}^{i}h_{ij} \bm{Q} (\bm{U}^{(j)})_N, \quad i = 1, \ldots, s, \\
		&(\bm{U}^{n+1})_N = (\bm{U}^{n})_N - \Delta t \sum_{j=1}^{s} \tilde{b}_{j} \bm{A} \partial_x (\bm{U}^{(j)})_N + \frac{\Delta t}{\varepsilon}\sum_{j=1}^{s}b_{j} \bm{Q} (\bm{U}^{(j)})_N.
	\end{aligned}
\end{equation}
Here $(\bm{U}^n)_N = ((U_1^n)_N, (U_2^n)_N, \ldots, (U_m^n)_N)^T$ where $(U_k^n)_N\in P_N := \textrm{span}\{ e^{ikx}| -N\le k\le N \}$ with $N$ being an integer. We denote $(\bm{U}^0)_N$ as the orthogonal projection of the initial condition $\bm{U}_{in}$ of the system \eqref{eq:PDE} in space $P_N$.
For any function $f\in P_N$, the following inequality holds \cite{hesthaven2007spectral}:  
\begin{equation}\label{equ:spectral}
\norm{\partial_x f}  \le N  \norm{f}.
\end{equation}

\section{Uniform stability}\label{sec:uniform-stability}

In this section, we will prove the uniform stability of the scheme \eqref{scheme:IMEX-RK-1} using the energy method.

For convenience, we rewrite \eqref{scheme:IMEX-RK-1} in the component-wise form
\begin{equation}\label{scheme:IMEX-RK-2}
	\begin{aligned}
		&U^{(i)}_k = U^{n}_k - \Delta t \sum_{j=1}^{i-1} \tilde{h}_{ij} \sum_{l=1}^m a_{kl} \partial_x U_{l}^{(j)} + \frac{\Delta t}{\varepsilon}\sum_{j=1}^{i}h_{ij} \sum_{l=1}^m q_{kl}U^{(j)}_l, \quad k = 1, \ldots, m, \quad i = 1, \ldots, s, \\
		&U^{n+1}_k = U^{n}_k - \Delta t \sum_{j=1}^{s} \tilde{b}_{j} \sum_{l=1}^m a_{kl} \partial_x U^{(j)}_l + \frac{\Delta t}{\varepsilon}\sum_{j=1}^{s}b_{j} \sum_{l=1}^m q_{kl} U^{(j)}_l, \quad k = 1, \ldots, m,
	\end{aligned}
\end{equation}
where $a_{kl}$ and $q_{kl}$ for $k,l=1,2,\ldots,m$ are the entries in the matrices $\bm{A}$ and $\bm{Q}$, respectively. Here and below, we omit the subscript $N$ in the scheme for simplicity.

Define the vectors $\bm{U}_k$ as a collection of the $k$-th component of $\bm{U}$ in $s$ stages:
\begin{equation}\label{equ:vector-u-stages}
    \bm{U}_k = (U^{(1)}_k, \ldots, U^{(s)}_k)^T \in \mathbb{R}^{s \times 1}, \quad k = 1,\ldots,m.
\end{equation}
Then the first $s$ stages in \eqref{scheme:IMEX-RK-2} can be rewritten as
\begin{equation}\label{equ:U-equ}
	\begin{aligned}
		\bm{U}_k =  U^{n}_k \bm{e} - \Delta t \tilde{\bm{H}}\sum_{l=1}^m a_{k l}\partial_x \bm{U}_l + \frac{\Delta t}{\varepsilon} \bm{H} \sum_{l=1}^m q_{kl} \bm{U}_l, \quad k = 1, \ldots, m,
	\end{aligned}
\end{equation}
with $\bm{e} = (1, 1, \ldots, 1)^T \in \mathbb{R}^{s\times 1}$.

We use the energy method to prove the uniform stability of \eqref{scheme:IMEX-RK-1}. Following \cite{hu_uniform_2023}, taking a constant matrix $\bm{M} \in\mathbb{R}^{s\times s}$ to be determined and left multiplying by $\bm{U}_j^T \bm{M}$ on both sides of \eqref{equ:U-equ}, we get a scalar equation
\begin{equation}\nonumber
	\begin{aligned}
		\bm{U}^T_j \bm{M} \bm{U}_k = \bm{U}^T_j \bm{M} U^{n}_k \bm{e}  - \Delta t \bm{U}^T_j \bm{M} \tilde{\bm{H}}\sum_{l=1}^m a_{kl}\partial_x \bm{U}_l + \frac{\Delta t}{\varepsilon} \bm{U}^T_j \bm{M} \bm{H} \sum_{l=1}^m q_{kl} \bm{U}_l,
	\end{aligned}
\end{equation}
which can be further simplified as
\begin{equation}\label{equ:rk-key-equ}
	\begin{aligned}
		U^{(s)}_j U^{(s)}_k = U^{n}_j U^{n}_k - \bm{U}^T_j \bm{M}_\star \bm{U}_k - \Delta t \bm{U}^T_j \bm{M} \tilde{\bm{H}}\sum_{l=1}^m a_{kl}\partial_x \bm{U}_l + \frac{\Delta t}{\varepsilon} \bm{U}^T_j \bm{M} \bm{H} \sum_{l=1}^m q_{kl} \bm{U}_l.
	\end{aligned}
\end{equation}
Here $\bm{M}_\star \in\mathbb{R}^{s\times s}$ is determined by $\bm{M}$:
\begin{equation}\nonumber
	\begin{aligned}
		 \bm{M}_\star = \bm{M}\begin{pmatrix}
			0 & 0 & 0 & \cdots & 0\\
			-1 & 1 & 0 & \cdots & 0\\
			-1 & 0 & 1 & \cdots & 0\\
			\vdots & \vdots & \vdots & \vdots & \vdots \\
			-1 & 0 & 0 & \cdots & 1
		\end{pmatrix} + 
		\begin{pmatrix}
			1 & 0 & 0 & \cdots & 0\\
			0 & 0 & 0 & \cdots & 0\\
			0 & 0 & 0 & \cdots & 0\\
			\vdots & \vdots & \vdots & \vdots & \vdots \\
			0 & 0 & 0 & \cdots & -1
		\end{pmatrix}.
	\end{aligned}
\end{equation}
Note that we use the fact that $U_k^{(1)} = U_k^{n}$ since the scheme is of type CK.

Multiplying \eqref{equ:rk-key-equ} by the symmetrizer $\bm{A}_0 = (a_{0jk})$ in the structural stability condition \eqref{item:stability-2} in section \ref{subsec:ssc}, summing over $j, k$, and integrating over $x$, we can obtain
\begin{equation}\label{equ:err-uniform-stability}
	\begin{aligned}
		\int \sum_{j, k=1}^m a_{0jk} U^{(s)}_j U^{(s)}_k \diff x 
		={}& \int \sum_{j, k=1}^m a_{0jk} U^{n}_j U^{n}_k \diff x \\
        &- \int \sum_{j, k=1}^m a_{0jk} \bm{U}^T_j \bm{M}_\star \bm{U}_k \diff x \\
		&- \Delta t \int \sum_{j, k=1}^m a_{0jk} \bm{U}^T_j \bm{M} \tilde{\bm{H}} \sum_{l=1}^m a_{kl}\partial_x \bm{U}_l \diff x\\
        &+ \frac{\Delta t}{\varepsilon} \int \sum_{j, k=1}^m a_{0jk} \bm{U}^T_j \bm{M} \bm{H} \sum_{l=1}^m q_{kl} \bm{U}_l \diff x\\
        {}& := T_1 + T_2 + T_3 + T_4.
	\end{aligned}
\end{equation}
Since $\bm{A}_0$ is an SPD matrix and $\bm{A}_0 \bm{Q}$ is a symmetric semi-negative-definite matrix from the structural stability condition in section \ref{subsec:ssc}, if further assuming $ \bm{M}_\star $ and $\bm{M} \bm{H} $ are semi-positive-definite matrices, the second term $T_2$ and the fourth term $T_4$ on the RHS of \eqref{equ:err-uniform-stability} may be good terms in the energy estimate.
Therefore, following \cite{hu_uniform_2023}, we assume that, for the matrix $\bm{H}$ in a type CK IMEX-RK scheme, there exists a matrix $\bm{M}$ such that
\begin{assump}\label{assump:M1}

\end{assump}
\begin{itemize}
	\item [($\bm{M1}$)] $\bm{M} \bm{H} + (\bm{M} \bm{H})^T$ is semi-positive-definite and has rank $(s-1)$.
	\item [($\bm{M2}$)] $\bm{M}_\star + \bm{M}_\star^T$ is semi-positive-definite and has rank $(s-1)$.
\end{itemize}
For some widely used IMEX-RK schemes such as ARS(2,2,2) \cite{ascher1997}, ARS(4,4,3) \cite{ascher1997} and BHR(5,5,3)$^\star$ \cite{boscarino2009siam}, \cite{hu_uniform_2023} gives their corresponding $\bm{M}$ matrices and discuss necessary conditions for the existence of $\bm{M}$. For another IMEX-RK scheme ARS(2,3,2) \cite{ascher1997}, it is easy to see the corresponding $\bm{M}$ matrix is the same as for ARS(2,2,2).  

The following lemma from \cite{hu_uniform_2023} will be used in the proof:
\begin{lem}[\cite{hu_uniform_2023}]\label{lemma-M2}
    Under ($\bm{M2}$) in Assumption \ref{assump:M1}, there exists a constant $C_{\bm{M}_\star}>0$ such that
    \begin{equation}\nonumber
        \bm{\xi}^T \bm{M}_\star \bm{\xi} \geq C_{\bm{M}_\star} \sum_{1\leq i < j \leq s} |\xi_i -\xi_j|^2,
    \end{equation}
	for any vector $\bm{\xi} = (\xi_1, \xi_2, \ldots, \xi_s)\in \mathbb{R}^s$. 
\end{lem}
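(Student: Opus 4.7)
The plan is to symmetrize and then compare two positive semi-definite quadratic forms that share the same one-dimensional kernel. First I write $\bm{\xi}^T \bm{M}_\star \bm{\xi} = \tfrac{1}{2}\bm{\xi}^T(\bm{M}_\star + \bm{M}_\star^T)\bm{\xi}$; by assumption ($\bm{M2}$) this form is PSD of rank exactly $s-1$, so $\ker(\bm{M}_\star + \bm{M}_\star^T)$ is one-dimensional and I only need to identify a single nonzero vector lying in it.

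The natural candidate is $\bm{e} = (1,\ldots,1)^T$. Writing the definition of $\bm{M}_\star$ as $\bm{M}\bm{E}_1 + \bm{E}_2$, where $\bm{E}_1$ is the first matrix in the definition (zero first row, and each subsequent $i$-th row having $-1$ in column $1$ and $1$ in column $i$) and $\bm{E}_2$ is the second (with $1$ at position $(1,1)$ and $-1$ at position $(s,s)$), a direct calculation gives $\bm{E}_1 \bm{e} = \bm{0}$ and $\bm{e}^T \bm{E}_2 \bm{e} = 1 - 1 = 0$. Hence $\bm{e}^T \bm{M}_\star \bm{e} = 0$, which by positive semi-definiteness of $\bm{M}_\star + \bm{M}_\star^T$ forces $\bm{e} \in \ker(\bm{M}_\star + \bm{M}_\star^T)$. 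Combined with the rank assumption, this gives $\ker(\bm{M}_\star + \bm{M}_\star^T) = \mathrm{span}(\bm{e})$.

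Next I invoke the elementary identity $\sum_{1 \leq i < j \leq s} |\xi_i - \xi_j|^2 = s|\bm{\xi}|^2 - (\bm{e}^T \bm{\xi})^2$, which is itself PSD with kernel exactly $\mathrm{span}(\bm{e})$. Both quadratic forms therefore depend only on the projection of $\bm{\xi}$ onto $\bm{e}^\perp$, and both are continuous and strictly positive on the unit sphere of $\bm{e}^\perp$. Compactness of that sphere implies the ratio of the two forms is bounded below by some $C > 0$, which yields the claimed inequality on all of $\mathbb{R}^s$. Quantitatively, one may take $C = \lambda_{\min}/s$, where $\lambda_{\min}$ is the smallest positive eigenvalue of $\tfrac{1}{2}(\bm{M}_\star + \bm{M}_\star^T)$, since $s\bm{I} - \bm{e}\bm{e}^T$ has largest eigenvalue $s$ on $\bm{e}^\perp$.

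I do not anticipate a serious obstacle. The essential content is the kernel identification for $\bm{M}_\star + \bm{M}_\star^T$, which is immediate from the explicit block structure and the rank-$(s-1)$ hypothesis; the final comparison is a standard application of compactness for positive-definite forms on the $(s-1)$-dimensional quotient by $\mathrm{span}(\bm{e})$.
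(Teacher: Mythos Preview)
Your argument is correct. The paper does not actually supply its own proof of this lemma; it simply cites the result from \cite{hu_uniform_2023}, so there is no proof in the present paper to compare against. Your approach---symmetrizing, identifying $\ker(\bm{M}_\star + \bm{M}_\star^T) = \mathrm{span}(\bm{e})$ via $\bm{E}_1\bm{e}=\bm{0}$ and $\bm{e}^T\bm{E}_2\bm{e}=0$, and then comparing two positive semi-definite forms with a common one-dimensional kernel---is the natural one and is presumably what the cited reference does as well. The kernel identification is the only substantive step, and you have carried it out cleanly; the explicit constant $C=\lambda_{\min}/s$ is a nice bonus.
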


Here we give our main result of this section:
\begin{thm}[Uniform stability]\label{theorem:imex-rk-us}
    Consider the fully discrete scheme \eqref{scheme:IMEX-RK-1}. Assume that the IMEX-RK time discretization is of type CK and ISA, and there exists a matrix $\bm{M}$ satisfying ($\bm{M1}$) and ($\bm{M2}$) in Assumption \ref{assump:M1}. Let $C_{CFL}>0$ be any fixed positive number. Then for any time $T>0$ and $n\in \mathbb{N}^+$ with $n\Delta t \leq T$, there exists a constant $C$ such that
	\begin{equation}\nonumber
		\norm{(\bm{U}^n)_N}^2_{\bm{A}_0} \leq C \norm{(\bm{U}^0)_N}^2_{\bm{A}_0},
	\end{equation}
	with the condition $\Delta t \leq \min(C_{CFL}/N^2, C)$. Here $C$ is a positive constant independent of $\varepsilon$, $N$ and $\Delta t$.
\end{thm}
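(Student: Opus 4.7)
The plan is to start from the energy identity \eqref{equ:err-uniform-stability}, which is already derived via multiplication by the symmetrizer $\bm{A}_0$. Because the scheme is ISA, the last stage coincides with the updated value, so the left-hand side equals $\|\bm{U}^{n+1}\|^2_{\bm{A}_0}$ (up to integration over $x$). The task then reduces to bounding the four terms on the right.

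\textbf{The $\bm{M}_\star$ term.} Writing the double sum as a quadratic form in stage-vectors and changing variables by $\bm{A}_0^{1/2}$ turns the integrand, for each spatial component, into $\bm{\xi}^T\bm{M}_\star\bm{\xi}$. Applying Lemma \ref{lemma-M2} componentwise and integrating over $x$ yields a strictly nonnegative dissipation of the form
\begin{equation*}
\int\sum_{j,k}a_{0jk}\bm{U}_j^T\bm{M}_\star\bm{U}_k\,dx\ \ge\ \delta_1\sum_{1\le p<q\le s}\|\bm{U}^{(p)}-\bm{U}^{(q)}\|^2_{\bm{A}_0},
\end{equation*}
which I keep in reserve to absorb stage-difference remainders from the other terms.

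\textbf{The relaxation term.} After the transformation in Section~\ref{subsec:ssc} we have $\bm{A}_0\bm{Q}=\mathrm{diag}(0,\bm{A}_{02}\hat{\bm{S}})$, which is symmetric semi-negative-definite by the structural stability condition and the hypothesis that $\bm{A}_{02}\hat{\bm{S}}$ is symmetric negative-definite. Symmetrizing the bilinear form in the pair $(j,p)\leftrightarrow(l,q)$ produces the tensor product of $\bm{A}_0\bm{Q}$ with $\bm{MH}+(\bm{MH})^T$; the latter is PSD by $(\bm{M1})$, so the whole term is nonpositive and can be dropped.

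\textbf{The explicit hyperbolic term (main obstacle).} By structural stability (ii), $\bm{A}_0\bm{A}$ is symmetric. Symmetrizing in $(j,p)\leftrightarrow(l,q)$ and integrating by parts under the periodic boundary condition shows that only the antisymmetric part of $\bm{M}\tilde{\bm{H}}$ contributes, so the sum collapses to indices with $p\ne q$. Writing $U_j^{(p)}=U_j^{(q)}+(U_j^{(p)}-U_j^{(q)})$ and noting that $\sum_{j,l}(\bm{A}_0\bm{A})_{jl}\int U_j^{(q)}\partial_x U_l^{(q)}\,dx=0$ by periodicity leaves
\begin{equation*}
|\,\text{Term 3}\,|\ \le\ C\Delta t\sum_{p\ne q}\|\bm{U}^{(p)}-\bm{U}^{(q)}\|\,\|\partial_x\bm{U}^{(q)}\|\ \le\ C\Delta t\,N\sum_{p\ne q}\|\bm{U}^{(p)}-\bm{U}^{(q)}\|\,\|\bm{U}^{(q)}\|,
\end{equation*}
where the second inequality uses the spectral inverse inequality \eqref{equ:spectral}. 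Young's inequality with a small parameter $\eta>0$ splits this into $\eta\|\bm{U}^{(p)}-\bm{U}^{(q)}\|^2$, absorbed into the $\bm{M}_\star$ dissipation, plus $\tfrac{(\Delta tN)^2}{4\eta}\|\bm{U}^{(q)}\|^2$. The CFL assumption $\Delta t\le C_{\mathrm{CFL}}/N^2$ gives $(\Delta tN)^2\le C_{\mathrm{CFL}}\Delta t$; decomposing $\bm{U}^{(q)}=\bm{U}^n+(\bm{U}^{(q)}-\bm{U}^n)$, the difference part is again absorbed into the $\bm{M}_\star$ dissipation (using the additional smallness $\Delta t\le C$), and only a harmless $C\Delta t\|\bm{U}^n\|^2_{\bm{A}_0}$ remains.

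\textbf{Closure.} Combining the four estimates yields $\|\bm{U}^{n+1}\|^2_{\bm{A}_0}\le(1+C\Delta t)\|\bm{U}^n\|^2_{\bm{A}_0}$, and a discrete Gronwall argument over at most $T/\Delta t$ steps produces the claim. The crux is the explicit term: the combination of symmetry of $\bm{A}_0\bm{A}$, periodic integration by parts, and a telescoping across stage indices lets the dangerous factor $\Delta t\,N$ be absorbed into the $(\bm{M2})$ stage dissipation at the cost of a harmless $O(\Delta t)\|\bm{U}^n\|^2_{\bm{A}_0}$ contribution; this is precisely what forces the parabolic-type CFL condition $\Delta t=O(1/N^2)$ rather than the usual hyperbolic $1/N$.
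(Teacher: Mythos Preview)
Your overall strategy matches the paper's: derive the energy identity \eqref{equ:err-uniform-stability}, show the $\bm{M}_\star$ term yields stage-difference dissipation via Lemma~\ref{lemma-M2}, show the relaxation term is nonpositive via $(\bm{M1})$ and the semi-definiteness of $\bm{A}_0\bm{Q}$, and control the explicit convection term by symmetry of $\bm{A}_0\bm{A}$, integration by parts, Young, the spectral inverse inequality, and the CFL condition. Those four pieces are correct and essentially the paper's argument (your Kronecker-product/symmetrization phrasing for the relaxation and convection terms is a cosmetic variant of the paper's square-root decomposition $\bm{A}_0=\bm{D}^T\bm{D}$ and $-\bm{A}_0\bm{Q}=\bm{K}^T\bm{K}$).

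There is, however, a genuine gap in your closure step. You write ``Because the scheme is ISA, the last stage coincides with the updated value,'' and identify the left-hand side of \eqref{equ:err-uniform-stability} with $\|\bm{U}^{n+1}\|_{\bm{A}_0}^2$. That identification is only valid for GSA schemes (Definition~\ref{defn:ISA-GSA}): ISA means $h_{si}=b_i$, but says nothing about the explicit row, so in general $\tilde{h}_{si}\ne\tilde{b}_i$ and $\bm{U}^{(s)}\ne\bm{U}^{n+1}$. The theorem is stated under ISA alone, and the paper explicitly treats schemes like BHR(5,5,3)$^*$ that are ISA but not GSA. What ISA \emph{does} buy you is that the stiff relaxation contributions cancel in the difference:
\[
U_k^{n+1}=U_k^{(s)}-\Delta t\sum_{j=1}^s(\tilde{b}_j-\tilde{h}_{sj})\sum_{l=1}^m a_{kl}\,\partial_x U_l^{(j)},
\]
so $\bm{U}^{n+1}-\bm{U}^{(s)}$ contains only an explicit convection remainder. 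The paper then multiplies by $2U_i^{n+1}a_{0ik}$, sums and integrates, and picks up an additional good term $-\|\bm{U}^{n+1}-\bm{U}^{(s)}\|_{\bm{A}_0}^2$; the cross term is handled by exactly the same symmetry/periodicity/Young/CFL mechanism you already used for Term~3, with $\|U^{n+1}_l-U^{(j)}_l\|^2$ absorbed partly by $\|\bm{U}^{n+1}-\bm{U}^{(s)}\|_{\bm{A}_0}^2$ and partly by the stage dissipation. Without this extra step your argument proves stability only for GSA schemes, not for the full ISA class claimed in the theorem.
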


\begin{proof}	
In the proof, we first estimate the RHS of \eqref{equ:err-uniform-stability}.

We start with the second term on the RHS of \eqref{equ:err-uniform-stability}.
Since $\bm{A}_0\in\mathbb{R}^{m\times m}$ is an SPD matrix, there exists another SPD matrix $\bm{D} = (d_{ij})\in\mathbb{R}^{m\times m}$ such that $\bm{A}_0=\bm{D}^T \bm{D}$.  
Then the second term on the RHS of \eqref{equ:err-uniform-stability} can be estimated as
\begin{equation}
\begin{aligned}\nonumber
    T_2 :=& -\int \sum_{j, k=1}^m a_{0jk} \bm{U}^T_j \bm{M}_\star \bm{U}_k \diff x \\
    =& -\int \sum_{j, k=1}^m \sum_{l=1}^m d_{jl}d_{lk} \bm{U}^T_j \bm{M}_\star \bm{U}_k \diff x \\
    =& -\int \sum_{l=1}^m \brac{\sum_{j=1}^m d_{jl}\bm{U}^T_j} \bm{M}_\star \brac{\sum_{k=1}^m d_{lk} \bm{U}_k} \diff x. 
\end{aligned}
\end{equation}
Setting $\tilde{\bm{U}}_j := \sum_{k=1}^m d_{jk}\bm{U}_k$ for $j=1,2,\ldots,m$, we obtain
\begin{equation}\label{eq:err-stability-second-term-RHS}
\begin{aligned}
    T_2 ={}& -\int \sum_{l=1}^m \tilde{\bm{U}}_l^T \bm{M}_\star \tilde{\bm{U}}_l \diff x  \\
    \leq & -C_{\bm{M}_\star} \sum_{l=1}^m \sum_{1\leq i < j \leq s} \int |\tilde{U}_l^{(i)} - \tilde{U}_l^{(j)}|^2 \diff x \\
    \leq & -C \sum_{l=1}^m \sum_{1\leq i < j \leq s} \int |U_l^{(i)} - U_l^{(j)}|^2 \diff x\\
    \leq & -C_\star \norm{\delta \bm{U}}^2
\end{aligned}
\end{equation}
with $C_\star$ a positive constant.
Here we denote 
\begin{equation}\nonumber
	\norm{\delta \bm{U}}^2 := \sum_{l=1}^m\norm{\delta U_l}^2, \quad  \norm{\delta U_l}^2:= \sum_{1\leq i < j \leq s}\norm{U^{(i)}_l - U_l^{(j)}}^2.
\end{equation} 
In the first inequality, we use Lemma \ref{lemma-M2}; and in the second inequality, we use the equivalence of norms $\norm{\cdot}$ and $\norm{\cdot}_{\bm{A}_0}$ since $\bm{A}_0$ is an SPD matrix.

The third term on the RHS of \eqref{equ:err-uniform-stability} can be estimated as 
\begin{equation}\label{err:rk-u-deri-u}
	\begin{aligned}
		T_3 := &- \Delta t \int \sum_{j, k=1}^m a_{0jk} \bm{U}^T_j \bm{M} \tilde{\bm{H}}\sum_{l=1}^m a_{kl}\partial_x \bm{U}_l \diff x \\
        ={}& - \Delta t \sum_{i, r=1}^s \sum_{j, l=1}^m  \int  U^{(i)}_j (\bm{M}\tilde{\bm{H}})_{i r} (\bm{A}_0\bm{A})_{jl}\partial_x U^{(r)}_l \diff x \\
        ={}& - \Delta t \sum_{i, r=1}^s \sum_{j, l=1}^m  \int  U^{(i)}_l (\bm{M} \tilde{\bm{H}})_{i r} (\bm{A}_0\bm{A})_{lj}\partial_x U^{(r)}_j \diff x \\
        ={}& - \Delta t \sum_{i, r=1}^s \sum_{j, l=1}^m  \int \frac{1}{2} \brac{(\bm{M} \tilde{\bm{H}})_{i r} (\bm{A}_0\bm{A})_{jl}}\brac{U^{(i)}_j\partial_x U^{(r)}_l + U^{(i)}_l\partial_x U^{(r)}_j} \diff x \\
        \leq {}& C \Delta t \sum_{i, r=1}^s \sum_{j, l=1}^m \abs{\int  \left( U^{(i)}_j \partial_x U^{(r)}_l + U^{(i)}_l \partial_x U^{(r)}_j \right ) \diff x}.
	\end{aligned}
\end{equation}
Here, in the second equality, we utilize the invariance of the summation when interchanging the indices $j$ and $l$; in the third equality, we use the fact that $\bm{A}_0 \bm{A}$ is a symmetric matrix.
Then, each term in the result of \eqref{err:rk-u-deri-u} can be rewritten as 
\begin{equation}\label{equ:err-u-periodic}
	\begin{aligned}
		&\bigg| \Delta t \int \left( U^{(i)}_j \partial_x U^{(r)}_l + U^{(i)}_l \partial_x U^{(r)}_j \right ) \diff x \bigg | \\
        ={}& \bigg|  \Delta t \int (U_j^{(i)} - U_j^{(r)}) \partial_x U_l^{(r)} \diff x + \Delta t \int (U_l^{(i)} - U_l^{(r)}) \partial_x U_j^{(r)} \diff x + \Delta t \int (U_j^{(r)} \partial_x U_l^{(r)} + U_l^{(r)} \partial_x U_j^{(r)}) \diff x \bigg| \\
        ={}& \bigg|  \Delta t \int (U_j^{(i)} - U_j^{(r)}) \partial_x U_l^{(r)} \diff x + \Delta t \int (U_l^{(i)} - U_l^{(r)}) \partial_x U_j^{(r)} \diff x + \Delta t \int \partial_x(U_j^{(r)}  U_l^{(r)}) \diff x \bigg| \\
        ={}& \bigg|  \Delta t \int (U_j^{(i)} - U_j^{(r)}) \partial_x U_l^{(r)} \diff x + \Delta t \int (U_l^{(i)} - U_l^{(r)}) \partial_x U_j^{(r)} \diff x \bigg| \\
        \le{}& \bigg|  \Delta t \int (U_j^{(i)} - U_j^{(r)}) \partial_x U_l^{(r)} \diff x \bigg| + \bigg| \Delta t \int (U_l^{(i)} - U_l^{(r)}) \partial_x U_j^{(r)} \diff x \bigg|,
	\end{aligned}
\end{equation}
where the periodic boundary condition is applied in the second to last equality.
Thus, $T_3$ can be estimated by
\begin{equation}\label{equ:err-u-s}
\begin{aligned}
    T_3 \leq {}& \sum_{1\leq i < r \leq s} \sum_{j, l=1}^m \brac{ C_\star \norm{U_j^{(i)} - U_j^{(r)}}^2 + C \Delta t^2 \norm{\partial_x U_l^{(r)}}^2 }\\
    \leq {}& \frac{C_\star}{4} \norm{\delta \bm{U}}^2 + C\sum_{r=1}^s \sum_{l=1}^m \Delta t \norm{U_l^{(r)}}^2
\end{aligned}
\end{equation}
with $C_\star$ in \eqref{eq:err-stability-second-term-RHS}.
Here, in the first inequality, we use Young’s inequality; and in the second inequality, we use the property \eqref{equ:spectral} of Fourier-Galerkin spectral method with the CFL condition:
\begin{equation}\label{eq:fourer-cfl}
    \Delta t \norm{\partial_x U_l^{(r)}}^2 \leq C_{CFL} \norm{U_l^{(r)}}^2.
\end{equation}
The term $\Delta t \norm{U_l^{(r)}}^2$ in \eqref{equ:err-u-s} can be further estimated by
\begin{equation}\label{equ:err-u-s-2}
	\begin{aligned}
		C \Delta t \norm{U_l^{(r)}}^2 \leq 2C \Delta t \brac{\norm{U_l^{(1)}}^2 + \norm{U_l^{(r)} - U_l^{(1)}}^2} = 2C \Delta t \brac{\norm{U_l^{n}}^2 + \norm{U_l^{(r)} - U_l^{(1)}}^2}.
	\end{aligned}
\end{equation}
Take $\Delta t$ sufficiently small such that 
\begin{equation}\nonumber
\begin{aligned}
    C\sum_{r=1}^s \sum_{l=1}^m \Delta t \norm{U_l^{(r)}}^2 \leq {}C \Delta t \norm{\bm{U}^n}^2 + \frac{C_\star}{4} \norm{\delta \bm{U}}^2.
\end{aligned}
\end{equation}
Therefore, we have 
\begin{equation}\nonumber
    T_3 \leq {} C \Delta t \norm{\bm{U}^n}^2 +  \frac{C_\star}{2} \norm{\delta \bm{U}}^2.
\end{equation}

Next, we provide the estimate of the fourth term on the RHS of \eqref{equ:err-uniform-stability}.
According to the structural stability condition in section \ref{subsec:ssc}, $\bm{A}_0 \bm{Q}= \diag(0, - \bar{\bm{S}})$ with $\bar{\bm{S}}=-\bm{A}_{02}\hat{\bm{S}}\in \mathbb{R}^{r\times r}$ is an SPD matrix. Thus there exists a symmetric positive-semidefinite matrix $\bm{K}=(k_{ij})\in \mathbb{R}^{m\times m}$ such that $-\bm{A}_0 \bm{Q} = \bm{K}^T \bm{K}$. Set $\bar{\bm{U}}_i = \sum_{j= 1}^m k_{ij} \bm{U}_j$ with $i=1, \ldots, m$. 
Then using ($\bm{M1}$), the fourth term on the RHS of \eqref{equ:err-uniform-stability} can be estimated by
\begin{equation}\nonumber
	\begin{aligned}
		T_4 := \frac{\Delta t}{\varepsilon} \int \sum_{j, k, l=1}^m a_{0jk} \bm{U}^T_j \bm{M} \bm{H} q_{kl} \bm{U}_l \diff x 
        ={}& \frac{\Delta t}{\varepsilon} \int \sum_{j, l=1}^m \bm{U}^T_j \bm{M} \bm{H} (\bm{A}_0 \bm{Q})_{jl} \bm{U}_l \\
        ={}& -\frac{\Delta t}{\varepsilon} \int \sum_{j, l=1}^m \sum_{i=1}^m \bm{U}^T_j k_{ji}  \bm{M} \bm{H} k_{il}\bm{U}_l\\
        ={}& - \frac{\Delta t}{\varepsilon} \int \sum_{i=1}^m \bar{\bm{U}}_i^T \bm{M} \bm{H} \bar{\bm{U}}_i \leq 0.
	\end{aligned}
\end{equation}

Combining the above estimates for each term in \eqref{equ:err-uniform-stability}, we obtain the following inequality 
\begin{equation}\label{equ:err-us}
	\begin{aligned}
        &\int \sum_{j, k=1}^m a_{0jk} U^{(s)}_j U^{(s)}_k \diff x \leq  \int \sum_{j, k=1}^m a_{0jk} U^{n}_j U^{n}_k \diff x - \frac{C_\star}{2} \norm{\delta \bm{U}}^2 + C \Delta t \norm{\bm{U}^{n}}^2.
	\end{aligned}
\end{equation}
If the IMEX-RK scheme is GSA, then $\bm{U}^{(s)}=\bm{U}^{n+1}$ and we can move to \eqref{3equ:estimate-stability} and the proof is completed. For a general ISA scheme which does not necessarily satisfy the GSA property (such as BHR(5,5,3)*), it will take some efforts to estimate the difference between  $\bm{U}^{(s)}$ and $\bm{U}^{n+1}$.

Using the ISA property, we rewrite the last equation of \eqref{scheme:IMEX-RK-2} as
\begin{equation}\nonumber
	\begin{aligned}
		U^{n+1}_k = U^{(s)}_k - \Delta t \sum_{j=1}^{s} (\tilde{b}_{j}-\tilde{h}_{sj}) \sum_{l=1}^m a_{kl} \partial_x U^{(j)}_l.
	\end{aligned}
\end{equation}
Multiplying the above equation by $2 U^{n+1}_i a_{0ik}$, summing over $i, k$ and integrating over $x$ give
\begin{equation}\nonumber
	\begin{aligned}
 		\int \sum_{i, k=1}^m a_{0ik} U^{n+1}_i U^{n+1}_k \diff x 
		={}& \int \sum_{i, k=1}^m U^{(s)}_i a_{0ik} U^{(s)}_k \diff x  - \int \sum_{i, k=1}^m (U^{n+1}_i - U^{(s)}_i)  a_{0ik} (U^{n+1}_k - U^{(s)}_k) \diff x  \\
		&- 2 \Delta t \int \sum_{i, k=1}^m  U^{n+1}_i a_{0ik} \sum_{j=1}^{s} (\tilde{b}_{j}-\tilde{h}_{sj}) \sum_{l=1}^m a_{kl} \partial_x U^{(j)}_l \diff x.\\
	\end{aligned}
\end{equation}
Combining with \eqref{equ:err-us}, we obtain
\begin{equation}\nonumber
	\begin{aligned}
		&\int \sum_{i, k=1}^m a_{0ik} U^{n+1}_i U^{n+1}_k \diff x \\
		\leq{} &\int \sum_{j, k=1}^m a_{0jk} U^{n}_j U^{n}_k \diff x + C \Delta t \norm{\bm{U}^{n}}^2 - \frac{C_\star}{2} \norm{\delta \bm{U}}^2 - C \norm{\bm{U}^{n+1} - \bm{U}^{(s)}}^2   \\
		&- 2 \Delta t \int \sum_{i, k=1}^m  U^{n+1}_i a_{0ik} \sum_{j=1}^{s} (\tilde{b}_{j}-\tilde{h}_{sj}) \sum_{l=1}^m a_{kl} \partial_x U^{(j)}_l \diff x.
	\end{aligned}
\end{equation}
Applying a similar treatment on the last term in the above equation as \eqref{err:rk-u-deri-u}, we have
\begin{equation}\nonumber
	\begin{aligned}
		&- 2 \Delta t \int \sum_{i, k=1}^m  U^{n+1}_i a_{0ik} \sum_{j=1}^{s} (\tilde{b}_{j}-\tilde{h}_{sj}) \sum_{l=1}^m a_{kl} \partial_x U^{(j)}_l \diff x\\
		\leq{}& C \Delta t \sum_{j=1}^{s} \sum_{i, l=1}^m \abs{ \int U^{n+1}_i \partial_x U^{(j)}_l + U^{n+1}_l \partial_x U^{(j)}_i \diff x }\\
        \leq{}& C \Delta t \sum_{j=1}^{s} \sum_{i, l=1}^m \abs{ \int \left( (U^{n+1}_i - U^{(j)}_i)  \partial_x U^{(j)}_l + (U^{n+1}_l - U^{(j))}_l) \partial_x U^{(j)}_i \right) \diff x }
	\end{aligned}
\end{equation}
and each term can be estimated by 
\begin{equation}\nonumber
	\begin{aligned}
		\Delta t \bigg| \int (U^{n+1}_i - U^{(j)}_i)\partial_x U^{(j)}_l \bigg|\leq C \norm{U^{n+1}_i - U^{(j)}_i}^2 + C \Delta t \norm{U^{(j)}_l}^2,
	\end{aligned}
\end{equation}
where we use Young's inequality and \eqref{eq:fourer-cfl}.
Using the following estimate 
\begin{equation}\nonumber
	\begin{aligned}
		\norm{U^{n+1}_l - U^{(j)}_l}^2 \leq 2(\norm{U^{n+1}_l - U^{(s)}_l}^2 + \norm{U^{(s)}_l - U^{(j)}_l}^2)
	\end{aligned}
\end{equation}
and \eqref{equ:err-u-s-2}, we obtain 
\begin{equation}\nonumber
	\begin{aligned}
		&- 2 \Delta t \int \sum_{i, k=1}^m  U^{n+1}_i a_{0ik} \sum_{j=1}^{s} (\tilde{b}_{j}-\tilde{h}_{sj}) \sum_{l=1}^m a_{kl} \partial_x U^{(j)}_l \diff x\\
		\leq{}& C \Delta t \norm{\bm{U}^{n}}^2 + \frac{C_\star}{4} \norm{\delta \bm{U}}^2 + C \norm{\bm{U}^{n+1} - \bm{U}^{(s)}}^2.
	\end{aligned}
\end{equation}
Therefore, we obtain 
\begin{equation}\label{3equ:estimate-stability}
	\begin{aligned}
		\int  \bm{U}^{n+1} \bm{A}_{0} \bm{U}^{n+1} \diff x \leq \int \bm{U}^{n} \bm{A}_0 \bm{U}^{n} \diff x + C \Delta t \norm{\bm{U}^{n}}^2.
	\end{aligned}
\end{equation}
which immediately implies
\begin{equation}\nonumber
	\norm{(\bm{U}^{n})_{N}}_{\bm{A}_0} \leq \exp(CT) \norm{(\bm{U}^{0})_N}_{\bm{A}_0},
\end{equation}
where we use the fact that $\norm{\bm{U}^{n}}_{\bm{A}_0}$ is equivalent to $\norm{\bm{U}^{n}}$ since $\bm{A}_0$ is an SPD matrix and Gronwall’s inequality. 

\end{proof}

Following \cite{hu_uniform_2023}, the IMEX-RK schemes ARS(2,2,2), ARS(2,3,2), ARS(4,4,3), and BHR(5,5,3)* all satisfy the assumptions in Theorem \ref{theorem:imex-rk-us}. Therefore, they are uniformly stable.
    
\section{Second-order uniform accuracy}\label{sec:2nd-uniform-accuracy}

In this section, we will prove the second-order uniform accuracy of the IMEX-RK scheme.

Following \cite{hu_uniform_2023}, we make the assumption for the matrix $\bm{H}$ in the IMEX-RK scheme of type CK \eqref{tableau:IMEX_RK}:
\begin{assump}\label{assumpt-H}
    The last component of $\bm{v}$ is zero, where $\bm{v}$ is a generator of the one-dimensional null space of $\bm{H}$. 
\end{assump}
This assumption is satisfied for the IMEX schemes of type ARS \cite{hu_uniform_2023} and BHR(5,5,3)* \cite{boscarino2009siam}.
Additionally, this leads to the following lemma:
\begin{lem}(\cite{hu_uniform_2023})\label{lemma:M1A-prop}
Under the assumption ($\bm{M1}$) and Assumption \ref{assumpt-H}, there exists a constant $C_{\bm{M}\bm{H}}>0$ such that
	\begin{equation*}
		\bm{\xi}^T \bm{M} \bm{H} \bm{\xi} \geq C_{\bm{M}\bm{H}} |\xi_s|^2
	\end{equation*}
	for any vector $\bm{\xi}=(\xi_1, \xi_2, \ldots, \xi_s)\in \mathbb{R}^s$. 
\end{lem}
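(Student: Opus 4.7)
The plan is to reduce the statement to a Rayleigh-quotient estimate for the symmetric part of $\bm{M}\bm{H}$. Since $\bm{\xi}^T\bm{M}\bm{H}\bm{\xi} = \tfrac{1}{2}\bm{\xi}^T\bigl(\bm{M}\bm{H}+(\bm{M}\bm{H})^T\bigr)\bm{\xi}$, setting $\bm{G}:=\tfrac{1}{2}\bigl(\bm{M}\bm{H}+(\bm{M}\bm{H})^T\bigr)$ converts the claim into a statement about the symmetric matrix $\bm{G}$, which by assumption ($\bm{M1}$) is positive semidefinite with rank exactly $s-1$ and hence has a one-dimensional null space.

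The first step is to pin down $\ker(\bm{G})$ explicitly. From $\bm{H}\bm{v}=\bm{0}$ one computes
\begin{equation*}
\bm{v}^T\bm{G}\bm{v} = \tfrac{1}{2}\bm{v}^T\bm{M}(\bm{H}\bm{v}) + \tfrac{1}{2}(\bm{H}\bm{v})^T\bm{M}^T\bm{v} = 0,
\end{equation*}
and because $\bm{G}$ is symmetric positive semidefinite (factor $\bm{G}=\bm{L}^T\bm{L}$), this already forces $\bm{G}\bm{v}=\bm{0}$. Combined with $\dim\ker(\bm{G})=1$, I conclude $\ker(\bm{G})=\mathrm{span}(\bm{v})$. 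Next I would orthogonally decompose $\bm{\xi}=\alpha\bm{v}+\bm{\xi}_\perp$ with $\bm{\xi}_\perp\perp\bm{v}$ in the Euclidean inner product. Since $\bm{G}$ is symmetric, $\mathrm{span}(\bm{v})^\perp$ is $\bm{G}$-invariant and $\bm{G}$ restricted there is positive definite, so
\begin{equation*}
\bm{\xi}^T\bm{G}\bm{\xi} = \bm{\xi}_\perp^T\bm{G}\bm{\xi}_\perp \geq \lambda_\star\,\|\bm{\xi}_\perp\|^2,
\end{equation*}
where $\lambda_\star>0$ is the smallest positive eigenvalue of $\bm{G}$. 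Finally, I invoke Assumption \ref{assumpt-H}: $v_s=0$ kills the contribution of $\alpha\bm{v}$ to the last coordinate, so $\xi_s=(\bm{\xi}_\perp)_s$ and therefore $|\xi_s|^2\leq\|\bm{\xi}_\perp\|^2$. Combining the two bounds yields the inequality with $C=\lambda_\star$.

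The main obstacle, though conceptually small, is matching $\ker(\bm{G})$ with $\mathrm{span}(\bm{v})$, since this is precisely what activates Assumption \ref{assumpt-H} in the final step. Without the hypothesis $v_s=0$, a component of $\bm{\xi}$ along $\bm{v}$ could contribute freely to $\xi_s$ without affecting $\bm{\xi}^T\bm{G}\bm{\xi}$, so no inequality of the claimed form could hold; and the rank-$(s-1)$ condition in ($\bm{M1}$) is likewise essential to guarantee positive definiteness of $\bm{G}$ on $\mathrm{span}(\bm{v})^\perp$.
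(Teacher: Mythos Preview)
Your proof is correct. The paper does not supply its own argument for this lemma---it simply cites the result from \cite{hu_uniform_2023}---so there is nothing to compare against, but the Rayleigh-quotient reduction you give (identifying $\ker(\bm{G})=\mathrm{span}(\bm{v})$ via positive semidefiniteness, then using $v_s=0$ to bound $|\xi_s|$ by $\|\bm{\xi}_\perp\|$) is clean and complete.
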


We denote the numerical error at the ${n}$-th time step as
\begin{equation}\nonumber
	\bm{U}_{e}^{n} = (U_{1e}^{n}, ~\ldots, ~U_{ne}^{n})^T := \brac{ (U_{1}^{n})_N - U_1(t_{n}), ~\ldots, ~(U_{m}^{n})_N - U_m(t_{n})}^T ,
\end{equation}
where $(U_{k})_N^{n}$ is the $k$-th component of the numerical solution and $U_k(t_{n})$ is the $k$-th component of the exact solution at $t_{n}$. We say that the initial data is consistent up to order $q$ if $\norm{\bm{U}_{in}}_{H^q}^2\leq C$ 
and the scheme is applied after an initial layer of length $T_0\geq 2\delta_0^{-1}q\varepsilon \log(\frac{1}{\varepsilon})$ with $\delta_0$ referring to Lemma \ref{thm:regularity-const}.

Our main result in this section is stated as follows.
\begin{thm}[Second order uniform accuracy of IMEX-RK schemes]\label{theorem:imex-rk-2order-u-a}
    Consider the fully discrete scheme \eqref{scheme:IMEX-RK-1}. Assume that the IMEX-RK time discretization is of type CK and ISA, and there exists a matrix $\bm{M}$ satisfying ($\bm{M1}$) and ($\bm{M2}$). In addition, further assume
	\begin{itemize}
		\item The IMEX-RK scheme satisfies the standard second-order conditions \eqref{equ:cond-1st-rk}--\eqref{equ:cond-2nd-rk}.
		\item $c_i = \tilde{c}_i, \qquad i =1, \ldots, s$.
		\item Assumption \ref{assumpt-H}.
		\item The initial data is consistent up to order 6.
	\end{itemize}
	Then for any $T>0$ and ${n} \in \mathbb{N}^+$ with ${n} \Delta t \leq T$, we have
	\begin{equation}\label{err:thm-err-4-1}
		\norm{\bm{U}_e^{n}}^2_{\bm{A}_0} \leq C(\Delta t^4 + \frac{1}{N^8}),
	\end{equation}
	with $C$ independent of $\varepsilon$, $N$ and $\Delta t$.
\end{thm}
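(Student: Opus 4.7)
The overall strategy is to derive an error equation of the same structural form as the scheme \eqref{scheme:IMEX-RK-2} and then apply the energy method of Theorem \ref{theorem:imex-rk-us} to this error equation, with the local truncation errors playing the role of forcing. Let $\bm{U}_e^{(i)} := (\bm{U}^{(i)})_N - \bm{U}(t_n + c_i \Delta t)$ and $\bm{U}_e^{n+1} := (\bm{U}^{n+1})_N - \bm{U}(t_{n+1})$. Substituting the exact solution into \eqref{scheme:IMEX-RK-1} produces residuals $\bm{R}^{(i)}$ and $\bm{R}^{n+1}$, so that the error satisfies the same linear recursion as the scheme but with additional source terms. The plan is to show that these residuals are uniformly $O(\Delta t^3)$ in time and $O(N^{-s})$ in space, then propagate this via a discrete Gronwall argument.

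For the local truncation error, the first step is to Taylor expand the exact solution about $t_n$ and substitute the PDE $\bm{U}_t = -\bm{A}\partial_x \bm{U} + \varepsilon^{-1}\bm{Q}\bm{U}$ to express time derivatives in terms of mixed space-time derivatives. The standard order conditions \eqref{equ:cond-1st-rk}--\eqref{equ:cond-2nd-rk} together with $c_i = \tilde{c}_i$ force the $O(\Delta t)$ and $O(\Delta t^2)$ contributions to cancel, leaving residuals of the form $\Delta t^3 \bm{\Phi}^{(i)}$ where $\bm{\Phi}^{(i)}$ involves third-order derivatives of $\bm{U}$ and mixed derivatives of $\bm{W}$ weighted by factors of $\varepsilon^{-1}$. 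The crucial observation is that every factor of $\varepsilon^{-1}$ appearing in the residual multiplies a $\bm{Q}$-image, hence acts only on the stiff variable $\bm{W}$; Lemma \ref{thm:regularity-const} then gives $\|\partial_t^{r_1}\partial_x^{r_2}\bm{W}\|^2 \le C\varepsilon^2$, so the $\varepsilon$ factors cancel and $\|\bm{R}^{(i)}\|$ is bounded uniformly in $\varepsilon$ by $C\Delta t^3$ plus the Fourier projection error $CN^{-4}$ obtained from the assumption $\|\bm{U}_{in}\|_{H^6} \le C$ and the regularity of time derivatives guaranteed after the initial layer.

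With the residuals controlled, the energy estimate is carried out by replacing $\bm{U}^{(i)}$ with $\bm{U}_e^{(i)}$ in the derivation leading to \eqref{equ:err-uniform-stability} and keeping track of three additional families of terms: inner products of $\bm{U}_e^{(i)}$ with the explicit residual $\Delta t\,\bm{A}\partial_x(\text{LTE})$, with the implicit residual $\Delta t \varepsilon^{-1}\bm{Q}(\text{LTE})$, and with the direct forcing $\bm{R}^{(i)}$. The first can be absorbed by the good term $-C\|\delta \bm{U}_e\|^2$ and the CFL condition \eqref{eq:fourer-cfl} just as in \eqref{err:rk-u-deri-u}--\eqref{equ:err-u-s}. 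The third is handled by Young's inequality against $\|\bm{U}_e\|^2$. The implicit residual term is the main obstacle and is treated using Lemma \ref{lemma:M1A-prop}: the quadratic form $\bm{\xi}^T \bm{M}\bm{H}\bm{\xi} \ge C|\xi_s|^2$ together with the fact that $\bm{Q}$ only acts on the $\bm{W}$-block means the stiff contribution can be paired with $\varepsilon^{-1}\bm{Q}\bm{U}_e^{(s)}$ and dominated, reflecting the extra dissipation that the implicit solve provides on the stiff component.

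The main obstacle I anticipate is book-keeping the cross-couplings between stages and components, especially showing that the $\varepsilon^{-1}$ in the implicit forcing term is genuinely tamed by the $O(\varepsilon)$ size of the stiff residual. Doing this cleanly likely requires splitting the error into its non-stiff block $\bm{V}_e$ and stiff block $\bm{W}_e$, writing separate recursions, and exploiting that $\bm{Q}$ annihilates $\bm{V}$; the Kronecker-product formalism alluded to in the introduction should make this tractable. Once every extra contribution is bounded by $C\Delta t(\|\bm{U}_e^n\|^2 + \Delta t^4 + N^{-8})$, a discrete Gronwall inequality applied to the analog of \eqref{3equ:estimate-stability} for the error yields \eqref{err:thm-err-4-1}.
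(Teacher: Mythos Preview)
Your proposal has a genuine gap at the local truncation error step, and this gap propagates through the rest of the argument.

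You claim that the standard second-order conditions \eqref{equ:cond-1st-rk}--\eqref{equ:cond-2nd-rk} together with $c_i=\tilde c_i$ force the $O(\Delta t)$ and $O(\Delta t^2)$ contributions in the residuals to cancel, leaving $\bm{R}^{(i)}=O(\Delta t^3)$ for every stage. This is false for the \emph{intermediate} stages: the order conditions \eqref{equ:cond-2nd-rk} constrain only the final update $\bm{U}^{n+1}$, not the stages $\bm{U}^{(i)}$. The assumption $c_i=\tilde c_i$ only kills the $O(\Delta t)$ term at each stage, so the stage truncation errors are $\bm{E}^{(i)}=O(\Delta t^2)$ while $\bm{E}^{n+1}=O(\Delta t^3)$ (this is exactly the paper's Lemma \ref{lem:truncatiom-rk-2nd}). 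Consequently, a direct energy estimate on the error equation with these forcings leads, after Young and Gronwall, to a global bound of order $\Delta t^2$ only --- the paper remarks explicitly that without further manipulation the $E_k^{(s)}=O(\Delta t^2)$ term contributes $O(\Delta t^3)$ per step and hence only first-order accuracy overall.

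The paper overcomes this by an auxiliary-variable trick that your outline does not contain: one defines a modified error $\bm{U}_{ke\star}$ through
\[
\bm{E}_k=(\bm{U}_{ke}-\bm{U}_{ke\star})-\mu\bm{H}\sum_l q_{kl}(\bm{U}_{le}-\bm{U}_{le\star}),
\]
inverting the stiff block via the Kronecker/vectorization formalism (this is where the matrix $\bm{\Phi}$ and Lemma \ref{lemma:muH} enter, giving the crucial bound $((\bm{A}_0\bm{Q})\otimes\bm{H})\bm{\Phi}=O((1+\mu)^{-1})$). This moves the $O(\Delta t^2)$ stage errors into a controllable position and yields, in a first pass, the intermediate estimate $\|\bm{U}_e^{n+1}\|^2\le C(\Delta t^4+\Delta t^2\mu^2/(1+\mu)^2)$, i.e.\ first-order uniform accuracy. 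A second, bootstrap pass then re-estimates the worst stiff term using the coercivity from Lemma \ref{lemma:M1A-prop} (the $-C\mu\|U^{(s)}_{ie\star}\|^2$ term) together with the first-pass bound on the $x$-derivatives of the error, and this upgrades the estimate to $O(\Delta t^4)$. Your sketch invokes Lemma \ref{lemma:M1A-prop} but pairs it against the wrong object (the stiff part of the exact solution rather than the $\bm{\Phi}$-transformed error), and it omits both the absorbing variable and the two-pass bootstrap that are the heart of the proof.
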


We note that the IMEX-RK schemes ARS(2,2,2), ARS(2,3,2), ARS(4,4,3), and BHR(5,5,3)* all satisfy the assumptions in Theorem \ref{theorem:imex-rk-2order-u-a}, hence they will exhibit at least second-order uniform accuracy in time.

We will prove Theorem \ref{theorem:imex-rk-2order-u-a} in the rest of this section. 
To begin with, notice that  the error of the initial value is bounded by 
\begin{equation}\nonumber
	\norm{(\bm{U}^0)_{N}-\bm{U}_{in}}^2 \leq \frac{1}{N^8}\norm{\bm{U}_{in}}_{H^4}^2 \leq \frac{C}{N^8}.
\end{equation}
Here we use the property of Fourier projection \cite{hesthaven2007spectral} and the fact that the initial data is consistent up to 4, i.e. $\norm{\bm{U}_{in}}^2_{H^4}\leq C$.

The proof of Theorem \ref{theorem:imex-rk-2order-u-a} is organized as follows:
We first analyze the local truncation error in section \ref{subsec:second-order-local-truncation}. Then, in section \ref{subsec:4-2}, we conduct energy estimates for the error $U_{ke}^{n}$, $k=1,\ldots, m$, in which we introduce the auxiliary error vector $\bm{U}_{ke\star}$ to handle the low stage order of intermediate stages. These energy estimates directly imply the first-order uniform accuracy. Based on this, we first reconsider the energy estimates of $U_{j}$, $j=m-r+1, \ldots, m$. Using Lemma \ref{lemma:M1A-prop} and mathematical induction, we can improve to second-order uniform accuracy. Furthermore, based on the estimate of $U_{j}$, $j=m-r+1, \ldots, m$, the second-order uniform accuracy can be obtained by Gronwall's inequality.

We also comment on the role of the consistency of the initial data in the analysis. To establish first-order uniform accuracy, we assume the initial data is consistent up to order 4. For the second-order accuracy, the proof additionally relies on error estimates for the spatial derivatives of the relevant quantities up to second order, which requires the initial data to be consistent up to order 6.

\subsection{Local truncation error}\label{subsec:second-order-local-truncation}
Using the scheme \eqref{scheme:IMEX-RK}, we define the local truncation errors $\bm{E}^{(i)} = (E_1^{(i)}, E_2^{(i)}, \ldots, E_m^{(i)})^T$ for $i=2, \ldots, s$ and $\bm{E}^{n+1} = (E_1^{n+1}, E_2^{n+1}, \ldots, E_m^{n+1})^T$ as follows:
\begin{equation}\label{equ:truncation-equ}
	\begin{aligned}
		&\bm{U}(t_{n} + c_i \Delta t)= \bm{U}(t_{n}) - \Delta t \sum_{j=1}^{i-1} \tilde{h}_{ij} \bm{A} \partial_x \bm{U}(t_{n} + c_j \Delta t) + \frac{\Delta t}{\varepsilon}\sum_{j=1}^{i}h_{ij} \bm{Q} \bm{U}(t_{n} + c_j \Delta t) - \bm{E}^{(i)}, \\
		&\bm{U}(t_{n} + \Delta t) = \bm{U}(t_{n}) - \Delta t \sum_{j=1}^{s} \tilde{b}_{j} \bm{A} \partial_x \bm{U}(t_{n} + c_j \Delta t) + \frac{\Delta t}{\varepsilon}\sum_{j=1}^{s}b_{j} \bm{Q} \bm{U}(t_{n} + c_j \Delta t) - \bm{E}^{n+1}.
	\end{aligned}
\end{equation}
Here $\bm{U}$ denotes the exact solution and we define $\bm{E}^{(1)}=0$.

We have the following estimates for the local truncation error:
\begin{lem}[Estimates for local truncation error]\label{lem:truncatiom-rk-2nd}
    For a second-order IMEX-RK scheme of type CK with $c_i = \tilde{c}_i$ and assume the initial data is consistent up to order $q\geq 3$. Then, we have the following estimate in the $L^2$ norm:
	\begin{equation}\nonumber
		\bm{E}^{(i)} = O(\Delta t^2),\quad i = 2, \ldots, s, \qquad \bm{E}^{n+1} = O(\Delta t^3)
	\end{equation}
    and the results hold for their $x$-derivatives up to order $q-3$.
\end{lem}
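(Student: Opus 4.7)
The plan is a direct Taylor-expansion argument: substitute the exact solution into the IMEX-RK update \eqref{equ:truncation-equ}, expand each evaluation of $\bm{U}(t_n+c_j\Delta t)$ around $t_n$, collect terms in powers of $\Delta t$, and use the scheme's order conditions together with the PDE \eqref{eq:PDE} to annihilate the low-order contributions. The main challenge, and the reason the result is nontrivial, is that individual terms in the expansion carry explicit powers of $1/\varepsilon$ coming from the stiff source; uniformity requires that these combine back into quantities controlled by Lemma \ref{thm:regularity-const}.

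For the stage errors $\bm{E}^{(i)}$, the $O(\Delta t^0)$ terms cancel trivially and the $O(\Delta t)$ contribution reads
\[
-c_i\Delta t\,\bm{U}_t(t_n) - \tilde{c}_i\Delta t\,\bm{A}\partial_x\bm{U}(t_n) + \frac{c_i\Delta t}{\varepsilon}\bm{Q}\bm{U}(t_n),
\]
which vanishes identically under the hypothesis $c_i=\tilde c_i$ by the PDE \eqref{eq:PDE}. Thus $\bm{E}^{(i)}=O(\Delta t^2)$, with the remainder involving $\partial_t^2\bm{U}$, $\partial_x\partial_t\bm{U}$, and $\varepsilon^{-1}\bm{Q}\partial_t\bm{U}$; the last of these is bounded uniformly in $\varepsilon$ because $\bm{Q}$ projects onto the $\bm{W}$ component and Lemma \ref{thm:regularity-const} yields $\|\partial_t\bm{W}\|=O(\varepsilon)$.

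For $\bm{E}^{n+1}$ I carry the expansion one order further, obtaining an $O(\Delta t^2)$ contribution with coefficients $\sum_j\tilde b_j c_j$ and $\sum_j b_j c_j$. The second-order conditions \eqref{equ:cond-2nd-rk} combine this with the $\frac{\Delta t^2}{2}\bm{U}_{tt}$ arising from expanding $\bm{U}(t_n+\Delta t)$ into the vanishing combination $\bm{U}_{tt}+\bm{A}\partial_x\bm{U}_t-\varepsilon^{-1}\bm{Q}\bm{U}_t$, which is exactly the time derivative of the PDE. The remaining terms are $O(\Delta t^3)$ with coefficient controlled by $\|\partial_t^3\bm{U}\|$ and $\|\partial_x\partial_t^2\bm{U}\|$, both uniform in $\varepsilon$ under the assumption $q\geq 3$. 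Spatial derivatives $\partial_x^k\bm{E}^{(i)}$ and $\partial_x^k\bm{E}^{n+1}$ are handled identically by differentiating the truncation-error identities in $x$; the needed regularity is that of $\partial_t^{r_1}\partial_x^{r_2}\bm{U}$ with $r_1+r_2\leq k+3$, which by Lemma \ref{thm:regularity-const} holds precisely when $k\leq q-3$.

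The main obstacle is thus not the algebra but verifying uniformity in $\varepsilon$: term by term the Taylor expansion carries $1/\varepsilon$ factors from the stiff source, and only after invoking the order conditions plus the PDE do these collapse into bounded quantities. The role of the initial layer $T_0\geq 2\delta_0^{-1}q\varepsilon\log(1/\varepsilon)$ assumed before the theorem is precisely to make the $\bm{W}$-type estimate \eqref{eq:regularity-W} in Lemma \ref{thm:regularity-const} available, producing the uniform-in-$\varepsilon$ bounds on the higher mixed derivatives that appear in the remainder.
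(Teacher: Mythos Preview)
Your proposal is correct and follows essentially the same route as the paper: Taylor-expand both sides of \eqref{equ:truncation-equ} about $t_n$, use $c_i=\tilde c_i$ and the first/second-order conditions together with the PDE (and its time derivative) to cancel the low-order terms, and then invoke the regularity estimates of Lemma~\ref{thm:regularity-const}---in particular $\|\partial_t^{r}\bm W\|=O(\varepsilon)$---to show that the $\varepsilon^{-1}\bm Q\partial_t^{r}\bm U$ remainders are uniformly bounded. The only small omission is that the same $\bm W$-estimate (namely $\|\partial_{tt}\bm W\|=O(\varepsilon)$, available for $q\ge 3$) is also needed for the $\varepsilon^{-1}\bm Q\partial_{tt}\bm U$ term in the $O(\Delta t^3)$ remainder of $\bm E^{n+1}$, which you handled explicitly for the stage errors but did not restate here.
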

\begin{proof}
	Lemma \ref{thm:regularity-const} shows  
	\begin{equation}\label{equ:truncatiom-regularity}
		\begin{aligned}
			\norm{\partial_t \bm{U}}_{H^2} + \norm{\partial_{tt} \bm{U}}_{H^1} + \norm{\partial_{ttt} \bm{U}} \leq C,\qquad \norm{\partial_t \bm{W}} + \norm{\partial_{tt} \bm{W}} \leq C \varepsilon,
		\end{aligned}
	\end{equation}
	if the initial data is consistent up to order 3. 
	
	Taylor expansion of both sides of the first equation in \eqref{equ:truncation-equ} gives
	\begin{equation}\nonumber
		\begin{aligned}
			&\bm{U}(t_{n}) + c_i \Delta t \partial_t \bm{U}(t_{n}) +O(\Delta t^2 \partial_{tt} \bm{U}) \\
			={}& \bm{U}(t_{n}) - \Delta t \sum_{j=1}^{i-1} \tilde{h}_{ij} \bm{A} \partial_x \bm{U}(t_{n}) + O(\Delta t^2 \partial_t\partial_x \bm{U})\\
            &+ \frac{\Delta t}{\varepsilon}\sum_{j=1}^{i}h_{ij} \hat{\bm{S}} \bm{W}(t_{n}) +  O\brac{\frac{\Delta t^2}{\varepsilon} \partial_t \bm{W}}- \bm{E}^{(i)}.\\
		\end{aligned}
	\end{equation}
	Since $\bm{U}$ satisfies the equation \eqref{eq:PDE}, we see that the $O(\Delta t)$ terms are cancelled due to $c_i = \sum_{j=1}h_{ij} = \sum_{j=1}\tilde{h}_{ij}$. Therefore, we get $\bm{E}^{(i)} = O(\Delta t^2)$ with the aid of estimates \eqref{equ:truncatiom-regularity}.
	
	Taylor expansion of both sides of the second equation in \eqref{equ:truncation-equ} gives
	\begin{equation}\nonumber
		\begin{aligned}
			&\bm{U}(t_{n}) + \Delta t \partial_t \bm{U}(t_{n}) + \frac{1}{2}\Delta t^2\partial_{tt}\bm{U}(t_{n}) +  O(\Delta t^3 \partial_{ttt} \bm{U}) \\
            ={}& \bm{U}(t_{n}) - \Delta t \sum_{j=1}^{s} \tilde{b}_{j} \bm{A} \brac{\partial_x \bm{U}(t_{n}) + c_j \Delta t \partial_t\partial_x \bm{U}(t_{n})} + O(\Delta t^3 \partial_{tt}\partial_x \bm{U})\\
            &+ \frac{\Delta t}{\varepsilon}\sum_{j=1}^{s}b_{j}  \hat{\bm{S}}\brac{ \bm{W}(t_{n}) + c_j \Delta t \partial_t \bm{W}(t_{n})} + O \brac{\frac{\Delta t^3}{\varepsilon} \partial_{tt} \bm{W}} - \bm{E}^{n+1}.
		\end{aligned}
	\end{equation}
	Similar as before, the order conditions \eqref{equ:cond-1st-rk} and \eqref{equ:cond-2nd-rk} show that all the $O(\Delta t)$ and $O(\Delta t^2)$ terms are cancelled. Therefore, we get $\bm{E}^{n+1} = O(\Delta t^3)$ with the aid of estimates \eqref{equ:truncatiom-regularity}.

 Since the equation \eqref{eq:PDE} is linear, the results for their $x-$derivatives can be obtained similarly.
\end{proof}

\subsection{Energy estimates for the error}\label{subsec:4-2}

Before deriving the energy estimates, we state the definitions of two operators: vectorization and Kronecker product, and list some properties that will be used later.
\begin{defn}[Vectorization]
    The vectorization of a matrix is a linear transformation which converts the matrix into a vector. Specifically, the vectorization of a $m\times n$ matrix $\bm{A}=(a_{ij})$, denoted by $vec(\bm{A})$, is a $mn \times 1$ column vector obtained by stacking the columns of the matrix $\bm{A}$ on top of one another: 
    \begin{equation}\nonumber
        vec(\bm{A}) = (a_{11}, \cdots, a_{m1}, a_{12}, \cdots, a_{m2}, \cdots, a_{1n}, \cdots, a_{mn})^T.
    \end{equation}
\end{defn}
\begin{defn}[Kronecker product]
    The Kronecker product, denoted by $\otimes$, is an operation on two matrices of arbitrary size resulting in a block matrix.  If $\bm{A}=(a_{ij})$ is $m \times n$ matrix and $\bm{B}$ is $p \times q$ matrix, then the Kronecker product $\bm{A} \otimes \bm{B}$ is $pm \times qn$ block matrix: 
    \begin{equation*}
        \bm{A} \otimes \bm{B} = \begin{pmatrix}
            a_{11}\bm{B} & \cdots & a_{1n}\bm{B}\\
            \ddots & \vdots & \ddots\\
            a_{m1}\bm{B} & \cdots & a_{mn}\bm{B}
        \end{pmatrix}.
    \end{equation*}
\end{defn}

\begin{prop}[\cite{Graham1981Kronecker}]
\begin{enumerate}
\item For $\bm{A}=(a_{ij})\in \mathbb{R}^{p\times q}$, $\bm{B}=(b_{kl})\in \mathbb{R}^{r\times s}$, 
\begin{equation}\label{prop: Kronecker-product-1}
    (\bm{A}\otimes \bm{B})_{(i-1)p+k,(j-1)q+l} = a_{i j}b_{k l}.
\end{equation}
\item For $\bm{A}\in \mathbb{R}^{p\times q}$, $\bm{B}\in \mathbb{R}^{q\times r}$, $\bm{C}\in \mathbb{R}^{r\times s}$
\begin{equation}\label{prop: vec}
    vec(\bm{A}\bm{B}\bm{C})=(\bm{C}^T\otimes \bm{A})vec(\bm{B}).
\end{equation} 

\item For $\bm{A}\in \mathbb{R}^{p\times q}, \bm{C}\in \mathbb{R}^{q\times k}, \bm{B}\in \mathbb{R}^{r\times s}, \bm{D}\in \mathbb{R}^{s\times l}$
\begin{equation}\label{prop: Kronecker-product}
    (\bm{A}\otimes \bm{B})(\bm{C}\otimes \bm{D}) = \bm{A} \bm{C} \otimes \bm{B}\bm{D}.
\end{equation}

\item
$\bm{A}\otimes \bm{B}$ is invertible if and only if both $\bm{A}$ and $\bm{B}$ are invertible, and
\begin{equation}\label{prop: invertible-Kronecker-product}
    (\bm{A}\otimes \bm{B})^{-1} =\bm{A}^{-1}\otimes \bm{B}^{-1}.
\end{equation}

\end{enumerate}
\end{prop}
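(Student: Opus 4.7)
The plan is to verify each of the four identities by direct algebraic computation. None of them requires any analytic machinery; they are entry-by-entry checks, and items (3) and (4) reduce to item (1) plus the definition of matrix multiplication, while item (2) reduces to item (3) together with two elementary special cases.

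For item (1), I would simply unpack the block-matrix definition of $\bm{A}\otimes\bm{B}$: its $(i,j)$ block equals $a_{ij}\bm{B}$, so the $(k,l)$ entry of that block sits at the global row and column positions prescribed by the column-stacking convention, with value $a_{ij}b_{kl}$. This step is pure bookkeeping, and the only risk is mislabeling the row or column indices when translating between the block indices $(i,j)$ and within-block indices $(k,l)$.

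Item (3), the mixed-product property, is the algebraic heart of the proposition and I expect it to be the main obstacle. Using item (1), the appropriate entry of $(\bm{A}\otimes\bm{B})(\bm{C}\otimes\bm{D})$ can be written as a double sum $\sum_{i',k'}a_{ii'}b_{kk'}c_{i'j}d_{k'l}$; because $i'$ and $k'$ run independently, this separates into $\bigl(\sum_{i'}a_{ii'}c_{i'j}\bigr)\bigl(\sum_{k'}b_{kk'}d_{k'l}\bigr) = (\bm{A}\bm{C})_{ij}(\bm{B}\bm{D})_{kl}$, which by item (1) is the corresponding entry of $\bm{A}\bm{C}\otimes\bm{B}\bm{D}$. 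The delicate point here is that one must verify that the stacked middle index used in the matrix product sweeps out exactly the Cartesian product of the ranges of $i'$ and $k'$, and this is guaranteed precisely by the dimensional compatibility $\bm{A}\in\mathbb{R}^{p\times q}$, $\bm{C}\in\mathbb{R}^{q\times k}$, $\bm{B}\in\mathbb{R}^{r\times s}$, $\bm{D}\in\mathbb{R}^{s\times l}$.

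With item (3) in hand, item (4) follows immediately: if $\bm{A}$ and $\bm{B}$ are both invertible, then $(\bm{A}\otimes\bm{B})(\bm{A}^{-1}\otimes\bm{B}^{-1}) = \bm{A}\bm{A}^{-1}\otimes\bm{B}\bm{B}^{-1} = \bm{I}\otimes\bm{I} = \bm{I}$, and similarly on the other side, which simultaneously establishes invertibility and the stated inverse formula; for the converse one can invoke the identity $\det(\bm{A}\otimes\bm{B}) = (\det\bm{A})^{r}(\det\bm{B})^{p}$ so that invertibility of $\bm{A}\otimes\bm{B}$ forces $\det\bm{A}\neq 0$ and $\det\bm{B}\neq 0$. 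Finally, for item (2) I would first prove the two building blocks $vec(\bm{A}\bm{X}) = (\bm{I}\otimes\bm{A})vec(\bm{X})$ and $vec(\bm{X}\bm{C}) = (\bm{C}^T\otimes\bm{I})vec(\bm{X})$ by direct column-wise inspection, then conclude $vec(\bm{A}\bm{B}\bm{C}) = (\bm{C}^T\otimes\bm{I})vec(\bm{A}\bm{B}) = (\bm{C}^T\otimes\bm{I})(\bm{I}\otimes\bm{A})vec(\bm{B}) = (\bm{C}^T\otimes\bm{A})vec(\bm{B})$ by collapsing the last product via item (3).
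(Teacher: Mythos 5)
Your proof outline is correct, but there is nothing in the paper to compare it against: the paper states these four identities as standard, well-known properties of the Kronecker product and the vectorization operator and supplies no proof at all. Your route --- entrywise verification of (1), the double-sum factorization $\sum_{i',k'}a_{ii'}b_{kk'}c_{i'j}d_{k'l}=(\bm{A}\bm{C})_{ij}(\bm{B}\bm{D})_{kl}$ for the mixed-product property (3), deducing (4) from (3) plus the determinant identity for the converse, and assembling (2) from the two special cases $vec(\bm{A}\bm{X})=(\bm{I}\otimes\bm{A})vec(\bm{X})$ and $vec(\bm{X}\bm{C})=(\bm{C}^T\otimes\bm{I})vec(\bm{X})$ via (3) --- is the standard textbook argument and is sound.

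One point worth flagging, since you yourself identify index bookkeeping as the only risk in item (1): the formula as printed in the paper, $(\bm{A}\otimes \bm{B})_{(i-1)p+k,(j-1)q+l}=a_{ij}b_{kl}$, uses the dimensions $p,q$ of $\bm{A}$ as the block offsets, whereas the correct general statement offsets by the dimensions of $\bm{B}$, namely $(\bm{A}\otimes \bm{B})_{(i-1)r+k,(j-1)s+l}=a_{ij}b_{kl}$ for $\bm{B}\in\mathbb{R}^{r\times s}$. (The two coincide only when $p=r$ and $q=s$; in all of the paper's actual applications the second factor is $s\times s$ and the indices $(p-1)s+i$ are used, so the usage downstream is consistent with the correct convention.) A careful write-up of your step (1) would either correct this or note that the identity is being applied only in the square case.
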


Similar as the proof of uniform stability in Section \ref{sec:uniform-stability}, we introduce the vectors $\bm{U}_k(t_{n})$ and $\bm{E}_k$ as a collection of the $k$-th component of the exact solution and the truncation error in all the stages:
\begin{equation}\label{equ:vec-U-nt}
    \bm{U}_k(t_{n}) := (U_k(t_{n} + c_1 \Delta t), \ldots, U_k(t_{n} + c_s \Delta t))^T, 
    \quad
    \bm{E}_k := (E_k^{(1)}, \ldots, E_k^{(s)})^T, \quad k=1,\ldots, m.
\end{equation}
where $U_k$ is the exact solution and $E_k^{(i)}$ is the truncation error defined in \eqref{equ:truncation-equ}.
Then, we rewrite \eqref{equ:truncation-equ} in the component-wise form
\begin{equation}\nonumber
	\begin{aligned}
		&U_k(t_n + c_i \Delta t) ={} U_k(t_n) - \Delta t \sum_{j=1}^{i-1} \tilde{h}_{ij} \sum_{l=1}^m A_{kl} \partial_x U_l(t_n + c_j \Delta t) \\
        &\qquad\qquad\qquad + \frac{\Delta t}{\varepsilon}\sum_{j=1}^{i}h_{ij} \sum_{l=1}^m Q_{kl}U_l(t_n + c_j \Delta t)-E_k^{(i)}, \quad k = 1, \ldots, m, \quad i = 1, \ldots, s,\\[2mm]
		&U_k(t_n + \Delta t) ={} U_k(t_n) - \Delta t \sum_{j=1}^{s} \tilde{b}_{j} \sum_{l=1}^m A_{kl} \partial_x U_l(t_n + c_j \Delta t) 
        + \frac{\Delta t}{\varepsilon}\sum_{j=1}^{s}b_{j} \sum_{l=1}^m Q_{kl}U_l(t_n + c_j \Delta t)-E_k^{n+1}.
	\end{aligned}
\end{equation}
Since $c_1=0$, we can write the first set of equations as 
\begin{equation}\nonumber
	\begin{aligned}
		\bm{U}_k(t_{n}) = U_k(t_{n})\bm{e} - \Delta t\tilde{\bm{H}}\sum_{l=1}^m a_{kl}\partial_x \bm{U}_l + \mu \bm{H} \sum_{l=1}^m q_{kl}\bm{U}_l - \bm{E}_k, \quad k = 1, \cdots, m.
	\end{aligned}
\end{equation}
Here and below, we use the notation
\begin{equation}\nonumber
    \mu := \frac{\Delta t}{\varepsilon}.
\end{equation}
Denote
\begin{equation}\nonumber
	\bm{U}_{ke} := \bm{U}_k - \bm{U}_k(t_{n}), \quad k = 1, \cdots, m,
\end{equation}
as the vector of numerical error in the $n$-th time step (with $\bm{U}_k$ given in \eqref{equ:vector-u-stages} and $\bm{U}_k(t_n)$ given in \eqref{equ:vec-U-nt}. Thus using equations \eqref{equ:U-equ} and \eqref{equ:truncation-equ}, we can obtain
\begin{equation}\label{equ:vec-u-ke}
	\begin{aligned}
		\bm{U}_{ke} = U_{ke}^{(1)}\bm{e} - \Delta t\tilde{\bm{H}}\sum_{l=1}^m a_{kl}\partial_x \bm{U}_{le} + \mu \bm{H} \sum_{l=1}^m q_{kl}\bm{U}_{le} + \bm{E}_k.
	\end{aligned}
\end{equation}

We now absorb the error vectors by introducing the auxiliary error vector $\bm{U}_{ke_\star}$ which satisfies
\begin{equation}\label{equ:u-star-def-1}
	\bm{E}_k = (\bm{U}_{ke} - \bm{U}_{ke_\star}) - \mu \bm{H} \sum_{l=1}^m q_{kl}(\bm{U}_{le} - \bm{U}_{le_\star}).
\end{equation}
\begin{rem}
The purpose of introducing the auxiliary error vector $\bm{U}_{ke_\star}$ is to address the low stage order of intermediate stages. It will become clear later when we rewrite the equation for $\bm{U}_{ke}$ in \eqref{equ:vec-u-ke} as an equation for $\bm{U}_{ke\star}$ in \eqref{equ:energy-estimate-Vstar}. By comparing \eqref{equ:vec-u-ke} and \eqref{equ:energy-estimate-Vstar}, we observe that an extra factor of $\Delta t$ appears before the residual terms in \eqref{equ:energy-estimate-Vstar}. This additional factor facilitates the error estimate in Section \ref{subsubsec:1st-order} and Section \ref{subsubsec:2nd-order}. 
\end{rem}

Next we analyze the relationship between $\bm{E}_{k}$ and $(\bm{U}_{le} - \bm{U}_{le_\star})$ to illustrate that $\bm{U}_{ke_\star}$ is well-defined from \eqref{equ:u-star-def-1}.
We introduce two matrices
\begin{equation}\label{def:e-v-matrices}
    \bm{E} := (\bm{E}_1, \bm{E}_2, \ldots, \bm{E}_m)\in \mathbb{R}^{s\times m},\quad \bm{V} := (\bm{U}_{1e} - \bm{U}_{1e\star}, \bm{U}_{2e} - \bm{U}_{2e\star}, \ldots, \bm{U}_{me} - \bm{U}_{me\star})\in \mathbb{R}^{s\times m}.
\end{equation}
Then \eqref{equ:u-star-def-1} can be rewritten in component-wise form: 
\begin{equation}
	\begin{aligned}\nonumber
		&e_{ik} = v_{ik} - \mu \sum_{j=1}^s\sum_{l=1}^m h_{ij}q_{kl} v_{jl}, \quad i = 1,\ldots,s, \quad k = 1,\ldots,m,
	\end{aligned}
\end{equation}
where $e_{ij}$ and $v_{ij}$ for $i=1,\ldots,s$ and $j=1,\ldots,m$ are the entries in the matrices $\bm{E}$ and $\bm{V}$, respectively.
Multiplying the above equation by $a_{0kr}$, i.e., the entry in the matrix $\bm{A}_0$, we obtain 
\begin{equation}\nonumber
	\begin{aligned}
		e_{ik}a_{0kr} = v_{ik}a_{0kr} - \mu \sum_{j=1}^s\sum_{l=1}^m h_{ij}v_{jl} q_{kl}a_{0kr}.
	\end{aligned}
\end{equation}
Summing over $k$ gives
\begin{equation}\label{equ:ea0-defi}
	\begin{aligned}
		\bm{E} \bm{A}_0 = \bm{V}\bm{A}_0 - \mu \bm{H} \bm{V} (\bm{A}_0 \bm{Q})^T,
	\end{aligned}
\end{equation}
where we use the fact that $\bm{A}_0$ is a symmetric matrix.

Applying vectorization operation $vec(\cdot)$ for equation \eqref{equ:ea0-defi} and using the property \eqref{prop: vec}, we can obtain
\begin{equation}\nonumber
	\begin{aligned}
		vec(\bm{E}\bm{A}_0) ={}& vec(\bm{V}\bm{A}_0) - \mu vec(\bm{H} \bm{V} (\bm{A}_0 \bm{Q})^T)\\
		={}& \brac{\bm{A}_0 \otimes \bm{I}_{s}} vec(\bm{V}) - \mu ((\bm{A}_0 \bm{Q}) \otimes \bm{H}) vec(\bm{V})\\
		={}& \brac{\bm{A}_0 \otimes \bm{I}_{s} - \mu (\bm{A}_0 \bm{Q}) \otimes \bm{H} }vec(\bm{V}).
	\end{aligned}
\end{equation}
Here $\bm{I}_{s}$ is a unit matrix with order $s\times s$. 
Since $\bm{A}_0$ is an SPD matrix and $\bm{A}_0 \bm{Q}$ is a symmetric matrix, there exists an invertible matrix $\bm{G}\in \mathbb{R}^{m\times m}$ and a diagonal matrix $\bm{\Lambda}\in \mathbb{R}^{m\times m}$ such that
\begin{equation}\nonumber
    \bm{A}_0 = \bm{G}^T \bm{G}, \qquad \bm{A}_0 \bm{Q} = \bm{G}^T \bm{\Lambda} \bm{G}.
\end{equation}
Here we briefly explain the constructions of $\bm{G}$ and $\bm{\Lambda}$. In fact, since $\bm{A}_0$ is an SPD matrix, there exists an invertible matrix $\bm{\tilde{G}}\in \mathbb{R}^{m\times m}$
such that
\begin{equation}\nonumber
    \bm{A}_0 = \bm{\tilde{G}}^T \bm{\tilde{G}}.
\end{equation}
For the symmetric matrix $\bm{\tilde{G}}^{-T} \bm{A}_0 \bm{Q}\bm{\tilde{G}}^{-1}$, there exist an orthogonal matrix $\bm{\bar{G}}\in \mathbb{R}^{m\times m}$ and a diagonal matrix $\bm{\Lambda}\in \mathbb{R}^{m\times m}$ such that
\begin{equation}\nonumber
    \bm{\tilde{G}}^{-T} \bm{A}_0 \bm{Q}\bm{\tilde{G}}^{-1} =\bm{\bar{G}}^T \bm{\Lambda} \bm{\bar{G}}.
\end{equation}
Set $\bm{G} := \bm{\bar{G}}\bm{\tilde{G}}$. Since $\bm{\bar{G}}$ is an orthogonal matrix, it is easy to check 
\begin{equation}\nonumber
    \bm{A}_0 = (\bm{\bar{G}}^{-1}\bm{G})^{T}\bm{\bar{G}}^{-1}\bm{G}=\bm{G}^{T}\bm{\bar{G}}^{-T}\bm{\bar{G}}^{-1}\bm{G}=\bm{G}^T \bm{G}, \qquad \bm{A}_0 \bm{Q} = \bm{G}^T \bm{\Lambda} \bm{G}.
\end{equation} 

Since congruent transformation does not change the negative index of inertia of $\bm{A}_0 \bm{Q}$, $\bm{\Lambda}$ is semi-negative definite.
Using the Kronecker product property \eqref{prop: Kronecker-product}, we have 
\begin{equation}\nonumber
	\begin{aligned}
	    &\bm{A}_0 \otimes \bm{I}_{s} - \mu (\bm{A}_0 \bm{Q}) \otimes \bm{H} \\
        ={}& \bm{G}^T \bm{G} \otimes \bm{I}_{s} - \mu (\bm{G}^T \bm{\Lambda} \bm{G}) \otimes H\\
        ={}& (\bm{G}^T \otimes \bm{I}_{s})(\bm{G} \otimes \bm{I}_{s}) - \mu (\bm{G}^T \otimes \bm{I}_{s})(\bm{\Lambda} \otimes \bm{H})(\bm{G} \otimes \bm{I}_{s}) \\
        ={}& (\bm{G}^T \otimes \bm{I}_{s})\brac{\bm{I}_{ms} - \mu (\bm{\Lambda} \otimes \bm{H})}(\bm{G} \otimes \bm{I}_{s}).
	\end{aligned}
\end{equation}
From the property \eqref{prop: invertible-Kronecker-product}, $(\bm{G} \otimes \bm{I}_{s})$ is an invertible matrix. 
Thus, we have 
\begin{equation}\nonumber
	\begin{aligned}
	    vec(\bm{V}) &= \left(\bm{A_0} \otimes \bm{I}_{s} - \mu (\bm{A_0} \bm{Q}) \otimes \bm{H}\right)^{-1} vec(\bm{E}\bm{A_0}) \\
        &= \left(\bm{A_0} \otimes \bm{I}_{s} - \mu (\bm{A_0} \bm{Q}) \otimes \bm{H}\right)^{-1} \left(\bm{A_0} \otimes \bm{I}_{s}\right)vec(\bm{E})\\
        &=(\bm{G} \otimes \bm{I}_{s})^{-1}\left(\bm{I}_{ms} - \mu (\bm{\Lambda} \otimes \bm{H})\right)^{-1}(\bm{G}^T \otimes \bm{I}_{s})^{-1}(\bm{G}^T \otimes \bm{I}_{s})(\bm{G} \otimes \bm{I}_{s})vec(\bm{E})\\
        &=(\bm{G} \otimes \bm{I}_{s})^{-1}\left(\bm{I}_{ms} - \mu (\bm{\Lambda} \otimes \bm{H})\right)^{-1}(\bm{G} \otimes \bm{I}_{s})vec(\bm{E}).
	\end{aligned}
\end{equation}
Define the matrix $\bm{\Phi}$:
\begin{equation}\label{eq:def-matrix-Phi}
    \bm{\Phi} = (\bm{G} \otimes \bm{I}_{s})^{-1}\left(\bm{I}_{ms} - \mu (\bm{\Lambda} \otimes \bm{H})\right)^{-1}(\bm{G} \otimes \bm{I}_{s})
\end{equation}    
we get
\begin{equation}\label{relation-eu}
    vec(\bm{V}) = \bm{\Phi} vec(\bm{E}).
\end{equation}
Combining the above relations with the definitions of $\bm{E}$ and $\bm{V}$ in \eqref{def:e-v-matrices}, we can absorb the error vectors by introducing $\bm{U}_{ke\star}$ for $k=1,\ldots, m$. The use of the auxiliary error vector $\bm{U}_{ke\star}$ is the main technique for addressing the low stage order of intermediate stages.

Here we use the following lemma to state the validity of the definition of $\bm{U}_{\star}$ in \eqref{equ:u-star-def-1}. 
\begin{lem}\label{lem:ustar-well-definied}
    The vector $\bm{U}_{\star}=\brac{\bm{U}_{1\star}, \bm{U}_{2\star}, \ldots, \bm{U}_{m\star}}^T$ is well-defined, as given by \eqref{equ:u-star-def-1}.
\end{lem}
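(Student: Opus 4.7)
The plan is to show that the map $\bm{E} \mapsto \bm{V}$ implicitly defined by \eqref{equ:u-star-def-1} is a bijection; once this is established, $\bm{U}_{k\star} = \bm{U}_{ke} - (\bm{U}_{ke} - \bm{U}_{ke\star})$ is uniquely determined from the known truncation error $\bm{E}_k$ and the known error $\bm{U}_{ke}$. By the derivation leading to \eqref{relation-eu}, this reduces to showing that the matrix $\bm{\Phi}$ in \eqref{eq:def-matrix-Phi} is well-defined, and since $(\bm{G}\otimes \bm{I}_s)$ is invertible by \eqref{prop: invertible-Kronecker-product}, the whole claim boils down to verifying that
\begin{equation}\nonumber
\bm{I}_{ms} - \mu\,(\bm{\Lambda}\otimes \bm{H})
\end{equation}
is invertible.

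First I would exploit the diagonal structure of $\bm{\Lambda} = \diag(\lambda_1,\ldots,\lambda_m)$. By \eqref{prop: Kronecker-product-1}, $\bm{\Lambda}\otimes \bm{H}$ is block diagonal with blocks $\lambda_i \bm{H}$, so $\bm{I}_{ms} - \mu(\bm{\Lambda}\otimes \bm{H})$ is block diagonal with blocks $\bm{I}_s - \mu\lambda_i \bm{H}$ for $i = 1,\ldots,m$. Its invertibility therefore follows from the invertibility of each such block. Next I would recall that by the structural stability condition $\bm{A}_0\bm{Q}$ is symmetric and negative semi-definite (it equals $\diag(0,-\bar{\bm{S}})$ with $\bar{\bm{S}}$ SPD); since the congruence $\bm{A}_0\bm{Q} = \bm{G}^T\bm{\Lambda}\bm{G}$ preserves inertia, each $\lambda_i \le 0$.

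The remaining step is the verification that $\bm{I}_s - \mu\lambda_i \bm{H}$ is invertible for every $i$. When $\lambda_i = 0$ this is trivial. When $\lambda_i < 0$, I would use that $\bm{H}$ is lower triangular with non-negative diagonal entries $h_{jj} \ge 0$ (the defining assumption of the class of IMEX-RK schemes considered here, which also guaranteed the well-posedness of the stages in Section \ref{esc:IMEX-RK-method}). Consequently $\bm{I}_s - \mu\lambda_i \bm{H}$ is lower triangular with diagonal entries $1 - \mu\lambda_i h_{jj} = 1 + \mu|\lambda_i| h_{jj} \ge 1 > 0$ for all $j$, so its determinant is strictly positive and the block is invertible.

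Combining these, $\bm{I}_{ms} - \mu(\bm{\Lambda}\otimes \bm{H})$ is invertible for every $\mu = \Delta t/\varepsilon > 0$, so $\bm{\Phi}$ is well-defined, $\bm{V}$ is uniquely determined by $\bm{E}$ through \eqref{relation-eu}, and therefore $\bm{U}_\star$ defined via \eqref{equ:u-star-def-1} is well-defined. There is no real obstacle here; the only thing to be careful about is the sign bookkeeping in $\lambda_i \le 0$ combined with $h_{jj}\ge 0$, which together make every diagonal entry of the lower triangular block strictly positive uniformly in $\varepsilon$.
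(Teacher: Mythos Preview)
Your proposal is correct and follows essentially the same approach as the paper: both reduce the well-definedness of $\bm{U}_\star$ to the invertibility of $\bm{I}_{ms}-\mu(\bm{\Lambda}\otimes\bm{H})$, and both establish this via the lower-triangular structure together with the sign information $\lambda_i\le 0$, $h_{jj}\ge 0$ forcing strictly positive diagonal entries. Your explicit block-diagonal decomposition into $\bm{I}_s-\mu\lambda_i\bm{H}$ is a slightly cleaner presentation of the same idea the paper carries out directly on the full $ms\times ms$ matrix.
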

\begin{proof}
    By \eqref{relation-eu}, $\bm{U}_{\star}$ is well-defined if and only if the matrix $\bm{\Phi}$ is invertible. Since $(\bm{G} \otimes \bm{I}_{s})$ is an invertible matrix, we only need to prove $\bm{I}_{ms} - \mu (\bm{\Lambda} \otimes \bm{H})$ is invertible. Because $\bm{H}$ is a lower triangular with non--negative diagonal elements and $\bm{\Lambda}$ is a diagonal semi--negative definite matrix,  $\bm{\Lambda} \otimes \bm{H}$ is also a lower triangular matrix with non--positive diagonal elements. Thus $\bm{I}_{ms} - \mu (\bm{\Lambda} \otimes \bm{H})$ is a lower triangular matrix and the diagonal elements are non-negative for $\mu >0$. Hence, matrix  $\bm{\Phi}$ is invertible, $\ie$ $\bm{U}_{\star}$ is well-defined.
\end{proof}

For the matrix $\bm{\Phi}$ defined in \eqref{eq:def-matrix-Phi}, we have the following results:   
\begin{lem} \label{lemma:muH}
    Let $\mu>0$. Then in component-wise, we have
	\begin{equation}
        \begin{aligned}
		  \bm{\Phi} &= O(1),\\
          \mu \left((\bm{A}_0 \bm{Q}) \otimes \bm{H}\right)\bm{\Phi} &= O(\frac{\mu}{1+\mu}).
        \end{aligned}
	\end{equation}
\end{lem}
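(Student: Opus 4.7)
The plan is to exploit the fact that $\bm{G}\otimes\bm{I}_s$ and its inverse are $\mu$-independent, so the conjugation in the definition \eqref{eq:def-matrix-Phi} of $\bm{\Phi}$ reduces the first estimate to bounding the entries of $(\bm{I}_{ms}-\mu(\bm{\Lambda}\otimes\bm{H}))^{-1}$ uniformly in $\mu\ge 0$. Since $\bm{\Lambda}=\diag(\lambda_1,\ldots,\lambda_m)$ is diagonal with $\lambda_i\le 0$, the Kronecker product $\bm{\Lambda}\otimes\bm{H}$ is block diagonal with $m$ blocks of size $s\times s$, the $i$-th block being $\lambda_i\bm{H}$. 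Therefore $(\bm{I}_{ms}-\mu(\bm{\Lambda}\otimes\bm{H}))^{-1}$ is block diagonal with blocks $(\bm{I}_s-\mu\lambda_i\bm{H})^{-1}$, and it suffices to bound each of these blocks componentwise.

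For $\lambda_i=0$ the block is $\bm{I}_s$. For $\lambda_i<0$, set $\alpha:=\mu|\lambda_i|\ge 0$ and use the CK partition $\bm{H}=\begin{pmatrix}0&0\\\bm{h}&\hat{\bm{H}}\end{pmatrix}$ to derive the explicit block-inverse formula
\begin{equation*}
(\bm{I}_s+\alpha\bm{H})^{-1}=\begin{pmatrix}1&0\\-\alpha\,\bm{K}(\alpha)^{-1}\bm{h}&\bm{K}(\alpha)^{-1}\end{pmatrix},\qquad \bm{K}(\alpha):=\bm{I}_{s-1}+\alpha\hat{\bm{H}}.
\end{equation*}
Because $\hat{\bm{H}}$ is lower triangular and invertible with diagonal entries $h_{ii}>0$ (as $h_{ii}\ge 0$ by assumption and the determinant is the product of the diagonal), $\det\bm{K}(\alpha)=\prod_{i\ge 2}(1+\alpha h_{ii})$ has no root on $[0,\infty)$, while every entry of $\bm{K}(\alpha)^{-1}$ is a rational function of $\alpha$ whose numerator has degree strictly less than the denominator. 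Hence each entry is continuous and bounded on $[0,\infty)$ and in fact decays like $1/(1+\alpha)$ at infinity; the prefactor $\alpha$ in the $(2,1)$ block is therefore also absorbed, and the first estimate $\bm{\Phi}=O(1)$ follows.

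For the second estimate, first use $\bm{A}_0\bm{Q}=\bm{G}^T\bm{\Lambda}\bm{G}$ and the mixed-product identity \eqref{prop: Kronecker-product} to rewrite
\begin{equation*}
\bigl((\bm{A}_0\bm{Q})\otimes\bm{H}\bigr)\bm{\Phi}=(\bm{G}^T\otimes\bm{I}_s)\,(\bm{\Lambda}\otimes\bm{H})\bigl(\bm{I}_{ms}-\mu(\bm{\Lambda}\otimes\bm{H})\bigr)^{-1}\,(\bm{G}\otimes\bm{I}_s),
\end{equation*}
so that it suffices to bound the block-diagonal middle factor whose $i$-th block is $\lambda_i\bm{H}(\bm{I}_s-\mu\lambda_i\bm{H})^{-1}$. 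The zero-eigenvalue blocks vanish, and for $\lambda_i<0$ the CK partition gives
\begin{equation*}
\bm{H}(\bm{I}_s+\alpha\bm{H})^{-1}=\begin{pmatrix}0&0\\ \bm{K}(\alpha)^{-1}\bm{h}&\hat{\bm{H}}\bm{K}(\alpha)^{-1}\end{pmatrix}.
\end{equation*}
The same rational-function argument shows every entry here is $O\bigl(1/(1+\alpha)\bigr)$, so the full block is $O\bigl(|\lambda_i|/(1+\mu|\lambda_i|)\bigr)$. Since the set of nonzero $\lambda_i$ is finite, the elementary inequality $|\lambda_i|(1+\mu)/(1+\mu|\lambda_i|)\le \max(1,|\lambda_i|)$ (obtained by checking monotonicity in $\mu$ on $[0,\infty)$) turns this into the claimed $O\bigl(1/(1+\mu)\bigr)$ bound.

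The main obstacle is securing the uniform-in-$\mu$ decay across the full range, particularly the intermediate regime where neither small-$\mu$ nor large-$\mu$ asymptotics apply. I would handle this entirely through the rational-function structure of the entries (no root of the denominator on $[0,\infty)$ and strictly smaller numerator degree) rather than via an operator-norm estimate; the only additional bookkeeping is the finite supremum of $|\lambda_i|$, which converts the sharper block-wise bound $1/(1+\mu|\lambda_i|)$ into the uniform bound $1/(1+\mu)$.
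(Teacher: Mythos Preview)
Your argument is correct and takes a genuinely different route from the paper. The paper works with the full $ms\times ms$ matrix: it splits $\bm{H}=\tilde{\bm{D}}+\bm{K}$ into its diagonal and strictly lower-triangular parts, writes $\bm{I}_{ms}-\mu\bm{\Lambda}\otimes\bm{H}=\bm{D}+\bm{L}$ with $\bm{D}$ diagonal and $\bm{L}$ strictly lower triangular, and then expands $(\bm{D}+\bm{L})^{-1}=\bigl(\bm{I}_{ms}+\sum_{i\ge1}(\mu\bm{D}^{-1}(\bm{\Lambda}\otimes\bm{K}))^i\bigr)\bm{D}^{-1}$ as a finite Neumann series, tracking each Kronecker-indexed entry by hand; the second bound follows from the identity $-\mu(\bm{\Lambda}\otimes\bm{H})(\bm{I}_{ms}-\mu\bm{\Lambda}\otimes\bm{H})^{-1}=\bm{I}_{ms}-(\bm{I}_{ms}-\mu\bm{\Lambda}\otimes\bm{H})^{-1}$. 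You instead use the diagonality of $\bm{\Lambda}$ to reduce to the $s\times s$ blocks $(\bm{I}_s+\alpha\bm{H})^{-1}$, invoke the CK partition for an explicit $2\times2$ block inverse, and handle the uniform-in-$\alpha$ decay of $\bm{K}(\alpha)^{-1}=(\bm{I}_{s-1}+\alpha\hat{\bm{H}})^{-1}$ by a Cramer/rational-function argument (degree of numerator strictly below degree of denominator, no real nonnegative poles). Your route is shorter and sidesteps the Kronecker-index bookkeeping; the paper's route produces the explicit expansion formula \eqref{equ:phi-ana-form}, which is not merely cosmetic since it is reused verbatim in the proof of Lemma~\ref{lem:b-phi-2}. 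Both arguments rest on the same structural input, namely $h_{ii}>0$ for $i\ge2$ (invertibility of $\hat{\bm{H}}$ together with $h_{ii}\ge0$), which is what keeps the relevant denominators bounded away from zero on $[0,\infty)$.
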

\begin{proof}
    Since $\bm{\Phi} = (\bm{G} \otimes \bm{I}_{s})^{-1}\left(\bm{I}_{ms} - \mu (\bm{\Lambda} \otimes \bm{H})\right)^{-1}(\bm{G} \otimes \bm{I}_{s})$ and $\bm{G} \otimes \bm{I}_{s}$ is independent on $\mu$, we only need to analyse matrix $\left(\bm{I}_{ms} - \mu (\bm{\Lambda} \otimes \bm{H})\right)^{-1}$. Set 
    \begin{equation*}
        \tilde{\bm{D}} := \diag(0, h_{22}, \ldots, h_{ss})    
    \end{equation*}
    as the diagonal part of $\bm{H}$, and $\bm{K}$ as the remaining strictly lower triangular part of $\bm{H}$:
    \begin{equation*}
        \bm{K} := \bm{H} - \tilde{\bm{D}}.
    \end{equation*}
    Denote
    \begin{equation}\nonumber
        \begin{aligned}
            \bm{D} := \bm{I}_{ms}  - \mu \bm{\Lambda} \otimes \tilde{\bm{D}}, \qquad \bm{L} := \bm{I}_{ms}  -\mu \bm{\Lambda} \otimes \bm{H} - \bm{D} = -\mu \bm{\Lambda} \otimes \bm{K}.
        \end{aligned}
    \end{equation}
    Note that $\bm{D}$ is also a diagonal $ms\times ms$ matrix and $\bm{L}$ is a strictly lower triangular $ms\times ms$ matrix since $\bm{\Lambda}$ is a diagonal matrix.
    Then we compute
    \begin{equation}\label{equ:phi-ana-form}
        \begin{aligned}
            (\bm{I}_{ms}  - \mu \bm{\Lambda}\otimes \bm{H})^{-1}
            ={}& (\bm{D}+\bm{L})^{-1} \\
            ={}& (\bm{I}_{ms}  + \bm{D}^{-1}\bm{L})^{-1}\bm{D}^{-1} \\
            ={}& \big(\bm{I}_{ms} -\mu \bm{D}^{-1}(\bm{\Lambda}\otimes \bm{K}) \big)^{-1}\bm{D}^{-1} \\
            ={}& \brac{\bm{I}_{ms}  + \sum_{i=1}^\infty\brac{\mu \bm{D}^{-1}(\bm{\Lambda}\otimes \bm{K})}^i } \bm{D}^{-1}\\
            ={}& \brac{\bm{I}_{ms}  + \sum_{i=1}^{ms-1}\brac{\mu\bm{D}^{-1}(\bm{\Lambda}\otimes \bm{K})}^i} \bm{D}^{-1}.
        \end{aligned}
    \end{equation}

Next, we proceed to analyze the above result. For any $1 \leq  \alpha, \beta \leq ms\in \mathbb{N}^+$, there exist $1\leq p,q \leq m\in \mathbb{N}^+$ and $1\leq i, j \leq s\in \mathbb{N}^+$ such that $\alpha = (p-1)s+i$ and $\beta = (q-1)s+j$.
Thus the element of $\mu \bm{D}^{-1}(\bm{\Lambda}\otimes \bm{K})$ can be rewritten as 
\begin{equation}\nonumber
    \begin{aligned}
           \mu \brac{\bm{D}^{-1}(\bm{\Lambda}\otimes \bm{K})}_{\alpha,\beta} ={}& \mu \brac{\bm{D}^{-1}(\bm{\Lambda}\otimes \bm{K})}_{(p-1)s+i,(q-1)s+j} \\
           ={}& \mu \sum_{r=1}^m\sum_{t=1}^s (\bm{D}^{-1})_{(p-1)s+i,(r-1)s+t}(\bm{\Lambda}\otimes \bm{K})_{(r-1)s+t,(q-1)s+j}.
    \end{aligned}
\end{equation}
Using the proposition \eqref{prop: Kronecker-product-1}, we can obtain
\begin{equation}\nonumber
    \begin{aligned}
            (\bm{D})_{(p-1)s+i,(r-1)s+t}={}& (\bm{I}_{ms}  - \mu \bm{\Lambda} \otimes \tilde{\bm{D}})_{(p-1)s+i,(r-1)s+t}\\
            ={}& \delta^{(p-1)s+i}_{(r-1)s+t} - \mu (\bm{\Lambda} \otimes \tilde{\bm{D}})_{(p-1)s+i,(r-1)s+t} \\
            ={}& \delta^{(p-1)s+i}_{(r-1)s+t} - \mu \Lambda_{p,r} \tilde{d}_{i,t}\\
    \end{aligned}
\end{equation}
and 
\begin{equation}\nonumber
    \begin{aligned}
            (\bm{\Lambda}\otimes \bm{K})_{(r-1)s+t,(q-1)s+j}=\Lambda_{r,q} k_{t, j}.
    \end{aligned}
\end{equation}
Here $\delta_i^j$ is Kronecker delta function.
Since $1\leq p,r \leq m\in \mathbb{N}^+$ and $1\leq i, t \leq s\in \mathbb{N}^+$, simple calculation shows that $(p-1)s+i = (r-1)s+t$ if and only if $p=r$ and $i=t$. This means that $\delta^{(p-1)s+i}_{(r-1)s+t}=\delta^{p}_{r}\delta^{i}_{t}$.

Since $\tilde{\bm{D}}$ and $\bm{\Lambda}$ are diagonal matrices, this means that $\Lambda_{p,r}=\Lambda_{p,p}\delta_r^p$ and $ \tilde{d}_{i,t}=\tilde{d}_{i,i}\delta_t^i$. Thus we know the elements of the diagonal matrix $\bm{D}^{-1}$ have the following form
\begin{equation*}
    (\bm{D}^{-1})_{(p-1)s+i,(r-1)s+t} = \frac{\delta^{p}_{r}\delta^{i}_{t}}{1-\mu\Lambda_{p,p}\tilde{d}_{i,i}}.
\end{equation*}
Therefore,  when $\alpha = (p-1)s+i$, $\beta = (q-1)s+j$, we get
\begin{equation}\nonumber
    \begin{aligned}
           \mu \brac{\bm{D}^{-1}(\bm{\Lambda}\otimes \bm{K})}_{\alpha,\beta}= \sum_{r=1}^m\sum_{t=1}^s \frac{\delta^{p}_{r}\delta^{i}_{t}} {1 - \mu \Lambda_{p,p} \tilde{d}_{i,i} } \mu \Lambda_{q,q}\delta_q^r k_{t, j}= \frac{\mu  \Lambda_{q,q}\delta_q^p k_{i, j}} {1 - \mu \Lambda_{p,p}\tilde{d}_{i,i}}= O(\frac{\mu}{1+\mu})\leq O(1)
    \end{aligned}
\end{equation}
and $\bm{D}^{-1} = O(1)$. We notice that the notation $O(\frac{\mu}{1+\mu})\leq O(1)$ is not standard, but is used here just for brevity. It means that $\frac{\mu}{1+\mu}$ grows no faster than 1 up to a constant factor as $\mu\rightarrow a$, where $0\le a\le +\infty$ (a finite non-negative number or $+\infty$). This is evidently true.
Therefore, we have $\bm{\Phi} = O(1)$.

Furthermore, we have
\begin{equation}\nonumber
    \begin{aligned}
        &(\mu (\bm{A}_0 \bm{Q}) \otimes \bm{H})\bm{\Phi} \\
        ={}& \mu (\bm{G}^T \otimes \bm{I}_{s})(\bm{\Lambda} \otimes \bm{H})(\bm{G} \otimes \bm{I}_{s})\bm{\Phi} \\
        ={}& \mu (\bm{G}^T \otimes \bm{I}_{s})(\bm{\Lambda} \otimes \bm{H})(\bm{G} \otimes \bm{I}_{s}) (\bm{G} \otimes \bm{I}_{s})^{-1}\left(\bm{I}_{ms} - \mu (\bm{\Lambda} \otimes \bm{H})\right)^{-1}(\bm{G} \otimes \bm{I}_{s})\\
        ={}& (\bm{G}^T \otimes \bm{I}_{s})(\mu\bm{\Lambda} \otimes \bm{H})\left(\bm{I}_{ms} - \mu (\bm{\Lambda} \otimes \bm{H})\right)^{-1}(\bm{G} \otimes \bm{I}_{s}).
    \end{aligned}
\end{equation}
For matrix $-(\mu\bm{\Lambda} \otimes \bm{H})\left(\bm{I}_{ms} - \mu (\bm{\Lambda} \otimes \bm{H})\right)^{-1}$, we have
\begin{equation}\nonumber
    \begin{aligned}
       & -(\mu\bm{\Lambda} \otimes \bm{H})\left(\bm{I}_{ms} - \mu (\bm{\Lambda} \otimes \bm{H})\right)^{-1} \\
       ={}& \left(\bm{I}_{ms} - \mu\bm{\Lambda} \otimes \bm{H} - \bm{I}_{ms}\right )\left(\bm{I}_{ms} - \mu (\Lambda \otimes \bm{H})\right)^{-1}\\
        ={}& \bm{I}_{ms} - \left(\bm{I}_{ms} - \mu (\bm{\Lambda} \otimes \bm{H})\right)^{-1} \\
        ={}& \bm{I}_{ms} - \bm{D}^{-1} - \sum_{i=1}^{ms-1}(\mu \bm{D}^{-1}(\bm{\Lambda}\otimes \bm{K}))^i\bm{D}^{-1}.
    \end{aligned}
\end{equation}
The previous analysis of $\mu \bm{D}^{-1}(\bm{\Lambda}\otimes \bm{K})$ shows that the last summation is $ O(\frac{\mu}{1+\mu})$. We also know that the element of the diagonal matrix $\bm{I}_{ms} - \bm{D}^{-1}$ has the following form
\begin{equation}\nonumber
    \begin{aligned}
        \brac{\bm{I}_{ms} - \bm{D}^{-1}}_{(p-1)s+i, (q-1)s+j}=\delta^p_q \delta^i_j - \frac{\delta^p_q \delta^i_j}{1  - \mu \Lambda_{p,p} \tilde{d}_{i,i} } = O(\frac{\mu}{1+\mu}).
    \end{aligned}
\end{equation}
Therefore $\mu\left((\bm{A}_0 \bm{Q}) \otimes \bm{H}\right)\bm{\Phi} =O(\frac{\mu}{1+\mu})$.
\end{proof}

Back to the energy estimates for the error, \eqref{equ:vec-u-ke} can be written as the equation of $\bm{U}_{ke\star}$ as follows
\begin{equation}\label{equ:energy-estimate-Vstar}
	\begin{aligned}
		\bm{U}_{ke\star} = U_{ke}^{(1)}\bm{e} - \Delta t\tilde{\bm{H}}\sum_{l=1}^m a_{kl}\partial_x \bm{U}_{le\star} + \mu \bm{H} \sum_{l=1}^m q_{kl}\bm{U}_{le\star} - \Delta t \tilde{\bm{H}}\bm{F}_{k}
	\end{aligned}
\end{equation}
where
\begin{equation}\nonumber
	\bm{F}_{k} = \sum_{l=1}^m a_{kl}\partial_x (\bm{U}_{le}-\bm{U}_{le\star})=\sum_{l=1}^m a_{kl}\partial_x \bm{V}_{l}.
\end{equation}
This implies that $\bm{F}_{k}$  consists of linear combinations of $\partial_x \bm{V}_{l}$. By \eqref{relation-eu} and Lemma \ref{lemma:muH}, each $\bm{V}_{l}$ is a linear combination of $\bm{E}$ with $O(1)$ coefficients. Therefore, $\bm{F}_{k}$ can be expressed as a linear combination of $\partial_x \bm{E}$.

\subsubsection{Proof of the first order uniform accuracy}\label{subsubsec:1st-order}
With the help of the auxiliary error vector $\bm{U}_{ke\star}$ and Lemma \ref{lemma:muH}, we will first prove the first order uniform accuracy of the scheme in this subsection, and then improve it to second-order uniform accuracy in Section \ref{subsubsec:2nd-order}.

Notice that the first component of $\bm{U}_{ke\star}$ is exactly $U_{ke}^{(1)} = U_{ke}^{n}$. Multiplying the $\bm{U}_{ke\star}$ equation in \eqref{equ:energy-estimate-Vstar} by $\bm{U}^T_{je\star} \bm{M}$, we get a scalar equation
\begin{equation}\nonumber
	\begin{aligned}
		\bm{U}^T_{je\star} \bm{M} \bm{U}_{ke\star} ={}& \bm{U}^T_{je\star} \bm{M} U^{n}_{ke\star} e  - \Delta t \bm{U}^T_{je\star} \bm{M} \tilde{\bm{H}}\sum_{l=1}^m a_{kl}\partial_x \bm{U}_{le\star}\\
        {}&+ \mu \bm{U}^T_{je\star} \bm{M} \bm{H} \sum_{l=1}^m q_{kl} \bm{U}_{le\star} - \Delta t \bm{U}^T_{je\star}\bm{M} \tilde{\bm{H}}\bm{F}_{k},
	\end{aligned}
\end{equation}
which can be further simplified as
\begin{equation}\label{equ:rk-key-equ-2}
	\begin{aligned}
		U^{(s)}_{je\star} U^{(s)}_{ke\star} ={}& U^{n}_{je} U^{n}_{ke} - \bm{U}^T_{je\star} \bm{M}_\star \bm{U}_{ke\star} - \Delta t \bm{U}^T_{je\star} \bm{M} \tilde{\bm{H}}\sum_{l=1}^m a_{kl}\partial_x \bm{U}_{le\star}\\
		{}& + \mu \bm{U}^T_{je\star} \bm{M} \bm{H} \sum_{l=1}^m q_{kl} \bm{U}_{le\star} - \Delta t \bm{U}^T_{je\star} \bm{M} \tilde{\bm{H}}\bm{F}_{k}.
	\end{aligned}
\end{equation}
Multiplying the last equation by $\bm{A}_0 = (a_{0jk})$, summing over $j, k$ and integrating in $x$, we obtain
\begin{equation}\label{4equ:err-u-s_jkestar}
	\begin{aligned}
		&\int \sum_{j, k=1}^m U^{(s)}_{je\star} a_{0jk}  U^{(s)}_{ke\star} \diff x \\
		={}& \int \sum_{j, k=1}^m  U^{n}_{je} a_{0jk} U^{n}_{ke} \diff x - \int \sum_{j, k=1}^m  \bm{U}^T_{je\star} a_{0jk} \bm{M}_\star \bm{U}_{ke\star} \diff x \\
		&- \Delta t \int \sum_{j, k=1}^m  \bm{U}^T_{je\star} a_{0jk} \bm{M}\tilde{\bm{H}}\sum_{l=1}^m a_{kl}\partial_x \bm{U}_{le\star} \diff x\\
		&+ \mu \int \sum_{j, k=1}^m  \bm{U}^T_{je\star}a_{0jk} \bm{M} \bm{H} \sum_{l=1}^m q_{kl} \bm{U}_{le\star} \diff x - \Delta t \int \sum_{j, k=1}^m \bm{U}^T_{je\star} a_{0jk}  \bm{M} \tilde{\bm{H}}\bm{F}_{k} \diff x.
	\end{aligned}
\end{equation}
Similar to the proof of Theorem \ref{theorem:imex-rk-us}, we can obtain
\begin{equation}\label{4equ:err-u-s_jestar}
	\begin{aligned}
		&\int \sum_{j, k=1}^m U^{(s)}_{je\star} a_{0jk}  U^{(s)}_{ke\star} \diff x \\
		\leq & \int \sum_{j, k=1}^m  U^{n}_{je}a_{0jk} U^{n}_{ke} \diff x + C\Delta t \norm{\bm{U}^{n}_{e\star}}^2 - \frac{C_\star}{2} \norm{\delta \bm{U}_{e\star}}^2 - \Delta t \int \sum_{j, k=1}^m \bm{U}^T_{je\star} a_{0jk}  \bm{M} \tilde{\bm{H}}\bm{F}_{k} \diff x,
	\end{aligned}
\end{equation}
where we use the fact $\bm{U}^{n}_{e}=\bm{U}^{n}_{e\star}$ and $C_\star$ in \eqref{eq:err-stability-second-term-RHS}. In the above inequality, $\bm{F}_{k}$ consists of linear combinations of $\partial_x \bm{E}$ with $O(1)$ coefficients (due to the Lemma \ref{lemma:muH}).
To treat the terms with $\bm{F}_{k}$, we have the estimate
\begin{equation}\nonumber
	\begin{aligned}
		\Delta t \bigg|\int  U^{(l)}_{je\star} \partial_x E_k^{(i)} \diff x \bigg| 
        \leq {} \Delta t(\norm{U^{(l)}_{je\star}}^2 + \norm{\partial_x E_k^{(i)}}^2 ) 
        \leq {} C \Delta t \norm{U^{n}_{le}}^2 + C\Delta t\norm{U^{(l)}_{je\star}-U^{n}_{le}}^2 + C (\Delta t)^5,
	\end{aligned}
\end{equation}
since $\partial_x E_k^{(i)} = O(\Delta t^2)$ for $1\leq i \leq s$ by Lemma \ref{lem:truncatiom-rk-2nd}. Therefore,  absorbing $C\Delta t\norm{U^{(l)}_{je\star}-U^{n}_{le}}^2$ by the good term $ - \frac{C_\star}{4} \norm{\delta \bm{U}_{e\star}}^2$ since $C \Delta t \leq C_\star/4$ and $\bm{U}^{n}_{e} = \bm{U}^{n}_{e\star}$, we get
\begin{equation}\label{err:ue-star}
	\begin{aligned}
		\int \sum_{j, k=1}^m U^{(s)}_{je\star} a_{0jk}  U^{(s)}_{ke\star} \diff x 
		\leq  \int \sum_{j, k=1}^m a_{0jk} U^{n}_{je} U^{n}_{ke} \diff x + C\Delta t \norm{\bm{U}^{n}_{e\star}}^2 - \frac{C_\star}{4} \norm{\delta \bm{U}_{e\star}}^2 + O(\Delta t^5).
	\end{aligned}
\end{equation}

Using \eqref{scheme:IMEX-RK-2} and \eqref{equ:truncation-equ}, the error $U_{ke}^{n+1}$ satisfies
\begin{equation}\label{equ:vec-u-nplus1e}
	\begin{aligned}
		U^{n+1}_{ke} = U_{ke}^{n} - \Delta t\sum_{l=1}^m\sum_{j=1}^s \tilde{b}_{j} a_{kl}\partial_x U^{{(j)}}_{le} + \mu \sum_{j=1}^s  \sum_{l=1}^m h_{sj} q_{kl}U^{{(j)}}_{le} + E^{n+1}_k.
	\end{aligned}
\end{equation}
Here we use the ISA property which is $b_j = h_{sj}$.
We rewrite \eqref{equ:vec-u-nplus1e} with $\bm{U}_{ke\star}$ as 
\begin{equation}\label{equ:umplus1-star}
\begin{aligned}
    U^{n+1}_{ke} = U_{ke}^{n} - \Delta t \tilde{\bm{b}} \sum_{l=1}^m  a_{kl}\partial_x \bm{U}_{le\star} + \mu  \bm{H}_{s} \sum_{l=1}^m q_{kl}\bm{U}_{le\star} + E^{n+1}_k 
    - \Delta t\tilde{\bm{b}} \bm{F}_{k} + \mu \bm{H}_s \sum_{l=1}^m q_{kl}(\bm{U}_{le}-\bm{U}_{le\star}),
\end{aligned}
\end{equation}
where $\bm{H}_{s}$ denotes the last row of the matrix $\bm{H}$ (and similar notation is used for the last row of other matrices).
Subtracting with the last row of the vector equations \eqref{equ:energy-estimate-Vstar}, we get
\begin{equation}\nonumber
	\begin{aligned}
		U^{n+1}_{ke} = U_{ke\star}^{(s)} - \Delta t (\tilde{\bm{b}}- \tilde{\bm{H}}_s) \sum_{l=1}^m a_{kl}\partial_x \bm{U}_{le\star} + E^{n+1}_k 
		- \Delta t(\tilde{\bm{b}}- \tilde{\bm{H}}_s) \bm{F}_{k} + \mu H_s \sum_{l=1}^mq_{kl}(\bm{U}_{le}-\bm{U}_{le\star}).
	\end{aligned}
\end{equation}
Multiplying above equation by $2U^{n+1}_{ie} a_{0ik} $ respectively, summing over $i, k$ and integrating in $x$ gives the energy estimate
\begin{equation}\label{err:rk-um1}
	\begin{aligned}
		&\int \sum_{i,k=1}^m U^{n+1}_{ie} a_{0ik} U^{n+1}_{ke} \diff x \\
		={}& \int \sum_{i,k=1}^m U^{(s)}_{ie\star} a_{0ik} U_{ke\star}^{(s)} \diff x -  \int \sum_{i,k=1}^m (U^{n+1}_{ie}-U^{(s)}_{ie\star}) a_{0ik} (U^{n+1}_{ke} - U_{ke\star}^{(s)}) \diff x \\
		& - 2\Delta t \int \sum_{i,k=1}^m U^{n+1}_{ie} a_{0ik}  (\tilde{\bm{b}}- \tilde{\bm{H}}_s) \sum_{l=1}^m a_{kl}\partial_x \bm{U}_{le\star} \diff x\\
		&+ 2 \int \sum_{i,k=1}^m U^{n+1}_{ie} a_{0ik} E^{n+1}_k \diff x - 2\Delta t \int \sum_{i,k=1}^m U^{n+1}_{ie} a_{0ik} (\tilde{\bm{b}}- \tilde{\bm{H}}_s) \bm{F}_{k} \diff x\\
		&+ 2 \mu \int \sum_{i,k=1}^m U^{n+1}_{ie} a_{0ik} \sum_{l=1}^m \bm{H}_s q_{kl}(\bm{U}_{le}-\bm{U}_{le\star}) \diff x.
	\end{aligned}
\end{equation}
Adding with \eqref{err:ue-star} gives 
\begin{equation}
	\begin{aligned}\label{4equ:err-umplus1}
		&\int \sum_{i,k=1}^m U^{n+1}_{ie} a_{0ik} U^{n+1}_{ke} \diff x \\
		\leq {}&\int \sum_{j, k=1}^m a_{0jk} U^{n}_{je} U^{n}_{ke} \diff x + C\Delta t \norm{\bm{U}^{n}_{e\star}}^2 -  \frac{C_\star}{4} \norm{\delta \bm{U}_{e\star}}^2 - C \norm{\bm{U}^{n+1}_{e}-\bm{U}^{(s)}_{e\star}}^2 \\
        & - 2\Delta t \int \sum_{i,k=1}^m U^{n+1}_{ie} a_{0ik}  (\tilde{\bm{b}}- \tilde{\bm{H}}_s) \sum_{l=1}^m a_{kl}\partial_x \bm{U}_{le\star} \diff x\\
		&+ 2 \int \sum_{i,k=1}^m U^{n+1}_{ie} a_{0ik} E^{n+1}_k \diff x - 2\Delta t \int \sum_{i,k=1}^m U^{n+1}_{ie} a_{0ik} (\tilde{\bm{b}}- \tilde{\bm{H}}_s) \bm{F}_{k} \diff x\\
		&+ 2 \mu \int \sum_{i,k=1}^m U^{n+1}_{ie} a_{0ik} \sum_{l=1}^m \bm{H}_s q_{kl}(\bm{U}_{le}-\bm{U}_{le\star}) \diff x + O(\Delta t^5).
	\end{aligned}
\end{equation}

The second line on the right side can be estimated by 
\begin{equation}\nonumber
    \begin{aligned}
        & - 2\Delta t \int \sum_{i,k=1}^m U^{n+1}_{ie} a_{0ik}  (\tilde{\bm{b}}- \tilde{\bm{H}}_s) \sum_{l=1}^m a_{kl}\partial_x \bm{U}_{le\star} \diff x \\
        \leq {}& C \sum_{i, l=1}^m \sum_{j=1}^s \Delta t \abs{\int  \brac{U_{ie}^{n+1} \partial_x U_{le\star}^{(j)} + U_{le}^{n+1} \partial_x U_{ie\star}^{(j)}} \diff x}\\
        \leq {}& C \sum_{i, l=1}^m \sum_{j=1}^s \Delta t \abs{\int \brac{ (U_{ie}^{n+1} - U_{ie\star}^{(j)}) \partial_x U_{le\star}^{(j)} + (U_{le}^{n+1} - U_{le\star}^{(j)}) \partial_x U_{ie\star}^{(j)}} \diff x}. 
    \end{aligned}
\end{equation}
Each term can be controlled by 
\begin{equation}\nonumber
    \begin{aligned}
        &\Delta t \abs{\int (U_{ie}^{n+1} - U_{ie\star}^{(j)}) \partial_x U_{le\star}^{(j)} \diff x} \leq C \norm{U_{ie}^{n+1} - U_{ie\star}^{(j)}}^2 + C \Delta t^2 \norm{\partial_x U_{le\star}^{(j)}}^2 \\[2mm]
        \leq {}& C \norm{U_{ie}^{n+1} - U_{ie\star}^{(s)}}^2 + C \norm{U_{ie\star}^{(s)} - U_{ie\star}^{(j)}}^2 + C \Delta t \norm{U_{le\star}^{(j)}}^2 \\[2mm]
        \leq {}& C \norm{U_{ie}^{n+1} - U_{ie\star}^{(s)}}^2 + C \norm{U_{ie\star}^{(s)} - U_{ie\star}^{(j)}}^2 + C \Delta t \norm{U_{le\star}^{n}}^2 + C \Delta t \norm{U_{le\star}^{(j)} - U_{le\star}^{n}}^2.
    \end{aligned}
\end{equation}
The first term can be absorbed by $- C\norm{\bm{U}^{n+1}_{e}-\bm{U}^{(s)}_{e\star}}^2$ and the second and fourth term can be absorbed by $- \frac{C_\star}{4} \norm{\delta \bm{U}_{e\star}}^2$. The only remaining items is $C \Delta t \norm{U_{le\star}^{n}}^2$ which can be estimated by the first term of RHS of in \eqref{4equ:err-umplus1} since $U_{le\star}^{n} = U_{le}^{n}$.

The each term in $\int \sum_{i,k=1}^m U^{n+1}_{ie} a_{0ik} E^{n+1}_k \diff x$ can be estimated by
\begin{equation}\nonumber
	\begin{aligned}
		\int U^{n+1}_{ie} a_{0ik} E^{n+1}_k \diff x \leq C (\Delta t\norm{U^{n+1}_{ie\star}}^2 + \frac{1}{\Delta t}\norm{E^{n+1}_k}^2 ) \leq C \Delta t \norm{\bm{U}^{n+1}_{e}}^2 + O(\Delta t^5),
	\end{aligned}
\end{equation}
using $E^{n+1}_k = O(\Delta t^3)$. 
Notice that if we do not introduce the new variable $\bm{U}_{\star}$, we just rewrite the equation of $\bm{U}^{n+1}_{e}$ by subtracting with the last row of the vector equation \eqref{equ:vec-u-ke} as we did before in proof of Theorem \ref{theorem:imex-rk-us} and estimate it. There will be an additional item for $\int \sum_{i,k=1}^m U^{n+1}_{ie} a_{0ik} E^{(s)}_k \diff x$ in the RHS of new equation  \eqref{4equ:err-umplus1} of $\bm{U}^{n+1}_{e}$. This term can only bounded by $\Delta t \norm{\bm{U}^{n+1}_{e}}^2$ and $O(\Delta t^3)$ since $E^{(s)}_k = O(\Delta t^2)$. This can not derive the second-order accuracy.
Similarly, for the terms involving $\bm{F}_{k}$, one can estimate as 
\begin{equation}\nonumber
	\begin{aligned}
		\Delta t \abs{\int U^{n+1}_{je\star} \partial_x E_k^{(i)} \diff x } \leq \Delta t(\norm{U^{n+1}_{je\star}}^2 + \norm{\partial_x E_k^{(i)}}^2 ) \leq C \Delta t \norm{\bm{U}^{n+1}_{e}}^2 + O(\Delta t^5),
	\end{aligned}
\end{equation}
and the term $C \Delta t \norm{\bm{U}^{n+1}_{e}}^2$ can be absorbed by LHS. Therefore all these non-stiff terms give a contribution of at most $O(\Delta t^5)$.

Note that 
\begin{equation}\nonumber
    \begin{aligned}
	   vec(\bm{U}_{1e} - \bm{U}_{1e\star}, \cdots, \bm{U}_{me} - \bm{U}_{me\star}) = \bm{\Phi} vec(\bm{E}_1,  ~\cdots, ~\bm{E}_m).
  \end{aligned}
\end{equation}
For the matrix $\mu ((\bm{A}_0 \bm{Q}) \otimes \bm{H}_s)\bm{\Phi}$, the best thing one can say is that it has elements $\leq C \frac{\mu}{1+\mu}$ from Lemma \ref{lemma:muH}. Therefore, the worst term is the stiff term, which can be estimated by
\begin{equation}\nonumber
	\begin{aligned}
		&2 \mu \int  U^{n+1}_{ie} a_{0ik} \bm{H}_s q_{kl}(\bm{U}_{le}-\bm{U}_{le\star}) \diff x \\[2mm]
		\leq {}& C\Delta t \norm{U^{n+1}_{ie}}^2 + \frac{C}{\Delta t}\max_{1\leq l\leq n} \norm{\mu ((\bm{A}_0 \bm{Q}) \otimes \bm{H}_s)\bm{\Phi} \bm{E}_{l}}^2\\[2mm]
		\leq {}& C\Delta t \norm{U^{n+1}_{ie}}^2 + \frac{C}{\Delta t}\frac{\mu^2}{(1+\mu)^2}\max_{1\leq l\leq n} \norm{\bm{E}_l}^2\\[2mm]
		\leq {}& C\Delta t \norm{U^{n+1}_{ie}}^2 + C(\Delta t)^3\frac{\mu^2}{(1+\mu)^2},
	\end{aligned}
\end{equation}
using $\norm{\bm{E}_l} = O(\Delta t^2)$. The term $C \Delta t \norm{U^{n+1}_{ie}}^2$ can be absorbed by LHS. Therefore we finally get 
\begin{equation}\label{err-inequ-UAU-1}
	\begin{aligned}
		&\int \sum_{i,k=1}^m U^{n+1}_{ie} a_{0ik} U^{n+1}_{ke} \diff x \\
		\leq {}& (1+ C\Delta t)\int \sum_{j, k=1}^m U^{n}_{je} a_{0jk} U^{n}_{ke} \diff x + O( \Delta t^5 + \Delta t^3\frac{\mu^2}{(1+\mu)^2}).
	\end{aligned}
\end{equation}
Using Gronwall's inequality, we get
\begin{equation}\label{err:uim1-1rd}
	\begin{aligned}		\norm{\bm{U}^{n}_{e}}^2_{\bm{A}_0} \leq{}& C(T)(\Delta t^4 + \Delta t^2\frac{\mu^2}{(1+\mu)^2} + \norm{\bm{U}^{0}_{e}}^2)\\
        \leq{}& C(T)(\Delta t^4 + \Delta t^2\frac{\mu^2}{(1+\mu)^2} + \frac{1}{N^8}).
	\end{aligned}
\end{equation}
Notice that the same error estimate works for any intermediate stages $U_{ke\star}^{(j)}$. Also, the above estimate only utilizes the consistency of initial data up to order 4. Since we assumed the consistency of initial data up to order 6, the same error estimate works for the $x$-derivatives of these quantities up to order 2. 

The inequality \eqref{err:uim1-1rd} implies the first-order uniform accuracy. To be more precise, if $\varepsilon = O(1)$, $\ie$  $\mu=O(\Delta t)$, then it gives the second-order accuracy, but it degenerates to the first-order accuracy if $\varepsilon\ll \Delta t$, $\ie$, $\mu\gg 1$. This motivates us to utilize the terms in the equation \eqref{4equ:err-u-s_jestar}, a coercive term proportional to $\mu$, to obtain the second-order uniform accuracy.

\subsubsection{Proof of the second order uniform accuracy}\label{subsubsec:2nd-order}
In this section, we improve the error estimate \eqref{err:uim1-1rd} in the previous section to the second-order uniform accuracy.

In the rest of proof, we assume $\varepsilon \leq 1$. Thus, $\Delta t^2\frac{\mu^2}{(1+\mu)^2}$ is always the worst term in \eqref{err:uim1-1rd}, since $\Delta t\leq C$ is assumed.

Multiplying \eqref{equ:rk-key-equ-2} by $a_{0jk}$, summing over $j, k=m-r+1, \ldots, m$ and integrating in $x$, we obtain
\begin{equation}\nonumber
	\begin{aligned}
		&\int \sum_{j, k=m-r+1}^m U^{(s)}_{je\star} a_{0jk}  U^{(s)}_{ke\star} \diff x \\
		={}& \int \sum_{j, k=m-r+1}^m  U^{n}_{je} a_{0jk} U^{n}_{ke} \diff x - \int\sum_{j, k=m-r+1}^m  \bm{U}^T_{je\star} a_{0jk} \bm{M}_\star \bm{U}_{ke\star} \diff x \\
		&- \Delta t \int \sum_{j, k=m-r+1}^m  \bm{U}^T_{je\star} a_{0jk} \bm{M}\tilde{\bm{H}}\sum_{l=1}^m a_{kl}\partial_x \bm{U}_{le\star} \diff x\\
		&+ \mu \int \sum_{j, k=m-r+1}^m  \bm{U}^T_{je\star}a_{0jk} \bm{M} \bm{H} \sum_{l=1}^m q_{kl} \bm{U}_{le\star} \diff x - \Delta t \int \sum_{j, k=m-r+1}^m \bm{U}^T_{je\star} a_{0jk}  \bm{M} \tilde{\bm{H}}\bm{F}_{k} \diff x.
	\end{aligned}
\end{equation}
Similar to the estimate in \eqref{eq:err-stability-second-term-RHS}, we have
\begin{equation}\nonumber
	\begin{aligned}
		&\int \sum_{j, k=m-r+1}^m U^{(s)}_{je\star} a_{0jk}  U^{(s)}_{ke\star} \diff x \\
		\leq {}& \int \sum_{j, k=m-r+1}^m a_{0jk} U^{n}_{je} U^{n}_{ke} \diff x - C_\star \sum_{i=m-r+1}^m \norm{\delta U_{ie\star}}^2 \\
        &- \Delta t \int \sum_{j, k=m-r+1}^m a_{0jk} \bm{U}^T_{je\star} \bm{M} \tilde{\bm{H}}\sum_{i=1}^m a_{ki}\partial_x \bm{U}_{ie\star} \diff x\\
		&+ \mu \int \sum_{j, k=m-r+1}^m a_{0jk} \bm{U}^T_{je\star} \bm{M} \bm{H} \sum_{i=1}^m q_{ki} \bm{U}_{ie\star} \diff x - \Delta t \int \sum_{j, k=m-r+1}^m a_{0jk} \bm{U}^T_{je\star} \bm{M} \tilde{\bm{H}}\bm{F}_{k} \diff x
	\end{aligned}
\end{equation}
with $C_\star$ in \eqref{eq:err-stability-second-term-RHS}.
For the fourth term of RHS in the above equation, according to the structural stability condition, we have $\bm{A}_0 \bm{Q}= \diag(0, -\bar{\bm{S}})$ with $\bar{\bm{S}} = -\bm{A}_{02}\hat{\bm{S}}$ being an SPD matrix. Thus there exists an SPD matrix $\bar{\bm{K}}=(\bar{k}_{ij})$ such that $\bar{\bm{S}} = \bar{\bm{K}}^T \bar{\bm{K}}$. Set $\bar{\bm{U}}_i = \sum_{j=m-r+1}^m \bar{k}_{ji} \bm{U}_j, i=m-r+1, \ldots, m$. Since $\bm{(M1)}$ and $\bm{(H)}$ are assumed, Lemma \ref{lemma:M1A-prop} gives the coercive estimate
\begin{equation}\nonumber
	\begin{aligned}
		&\mu \int \sum_{j, k=m-r+1}^m \bm{U}^T_{je\star} \bm{M} \bm{H} \sum_{i=1}^m a_{0jk} q_{ki} \bm{U}_{ie\star} \diff x \\
        ={}& - \mu \int \sum_{i=m-r+1}^m \bar{\bm{U}}^T_{ie\star} \bm{M} \bm{H}  \bar{\bm{U}}_{ie\star} \diff x \\
        \leq{}& - \mu C_{\bm{M}\bm{H}}  \sum_{i=m-r+1}^m \norm{U_{ie\star}^{(s)}}^2.
	\end{aligned}
\end{equation}
Here we use the equivalence of the norms $\norm{\cdot}$ and $\norm{\cdot}_{\bar{\bm{S}}}$.

For the third term, we have
\begin{equation}\label{equ:r-2or-2-1}
	\begin{aligned}
		{}&\bigg|- \Delta t \int \sum_{j, k=m-r+1}^m a_{0jk} \bm{U}^T_{je\star} \bm{M}\tilde{\bm{H}}\sum_{i=1}^m a_{ki}\partial_x \bm{U}_{ie\star} \diff x\bigg| \\
		\leq {}& C \Delta t \sum_{j=m-r+1}^m  \sum_{i=1}^m\sum_{p,l=1}^s \bigg\vert \int U^{(p)}_{je\star} \partial_x U^{(l)}_{ie\star} \diff x \bigg\vert \\
		\leq {}& \sum_{j=m-r+1}^m  \sum_{i=1}^m\sum_{p,l=1}^s\bigg(C_0 \frac{\mu}{1+\mu} \norm{U^{(p)}_{je\star}}^2 + C\Delta t^2\frac{1+\mu}{\mu} \norm{\partial_x U^{(l)}_{ie\star}}^2 \bigg) \\
		\leq {}& \sum_{j=m-r+1}^m \sum_{p=1}^s \frac{C_0\mu}{1+\mu} \norm{U^{(p)}_{je\star}}^2 + C\sum_{i=1}^m\sum_{l=1}^s\Delta t^2\frac{1+\mu}{\mu} \norm{\partial_x U^{(l)}_{ie\star}}^2  \\
		\leq {}& \sum_{j=m-r+1}^m \sum_{p=1}^s \frac{C_0 \mu}{1+\mu}\brac{\norm{U_{je\star}^{(p)} - U_{je\star}^{(s)}}^2 +  \norm{U_{j\star}^{(s)}}^2} + C\Delta t^2\frac{1+\mu}{\mu} \brac{\Delta t^2\frac{\mu^2}{(1+\mu)^2} + \frac{1}{N^8}}\\
		\leq {}& \sum_{j=m-r+1}^m \brac{\frac{C_\star}{4} \norm{\delta U_{je\star}}^2 + \mu \frac{C_{\bm{M}\bm{H}} }{2} \norm{U_{j\star}^{(s)}}^2} + C (\Delta t^4 + \frac{1}{N^8}) \frac{\mu}{1+\mu},
	\end{aligned}
\end{equation}
with $2C_0= \min \{C_\star/4, C_{\bm{M}\bm{H}}/2 \}$. Here we use the estimate \eqref{err:uim1-1rd} for $\norm{\partial_x U^{(l)}_{ie\star}}$ and the fact that $\Delta t^2\frac{1+\mu}{\mu} \leq C \frac{\mu}{1+\mu}$.

The remaining term $- \Delta t \int \sum_{j, k=m-r+1}^m a_{0jk} \bm{U}^T_{je\star} \bm{M} \tilde{\bm{H}}\bm{F}_{k} \diff x$ can be controlled by $ O(\Delta t^5) + C\Delta t \sum_{j=m-r+1}^m\norm{U_{je}^{n}}^2 - \frac{C_\star}{4}\sum_{j=m-r+1}^m \norm{\delta U_{je\star}}^2$ as before.

Therefore, we obtain the following estimate
\begin{equation}\nonumber
	\begin{aligned}
		&\int \sum_{j, k=m-r+1}^m U^{(s)}_{je\star} a_{0jk}  U^{(s)}_{ke\star} \diff x \\
        \leq {}& (1+ C\Delta t)\int \sum_{j, k=m-r+1}^m a_{0jk} U^{n}_{je} U^{n}_{ke} \diff x - \frac{C_\star}{2} \sum_{i=m-r+1}^m \norm{\delta U_{ie\star}}^2 \\
        &- \mu \frac{C_{\bm{M}\bm{H}} }{2}   \sum_{i=m-r+1}^m \norm{U_{ie\star}^{(s)}}^2 + C(\Delta t^4 + \frac{1}{N^8})\frac{\mu}{1+\mu}.
	\end{aligned}
\end{equation}
Since we assume $\Delta t\leq C$, $\varepsilon \leq 1$, it is easy to show that $O(\Delta t^5) \leq O(\Delta t^4 \frac{\mu}{1+\mu}) $ by noticing that $\Delta t^4 \frac{\mu}{1+\mu} = \Delta t^4 \frac{\frac{\Delta t}{\varepsilon}}{1+\frac{\Delta t}{\varepsilon}} =  \frac{\Delta t^5}{\varepsilon + \Delta t}$.

Combining the last estimate, we can obtain the following estimate similar to \eqref{err:rk-um1} 
\begin{equation}\nonumber
	\begin{aligned}
		&\int \sum_{j, k=m-r+1}^m U^{n+1}_{ie} a_{0ik} U^{n+1}_{ke} \diff x \\
        \leq {}& (1+ C\Delta t)\int \sum_{j, k=m-r+1}^m a_{0jk} U^{n}_{je} U^{n}_{ke} \diff x -  \frac{C_\star}{2} \sum_{i=m-r+1}^m \norm{\delta U_{ie\star}}^2 - \mu\frac{C_{\bm{M}\bm{H}}}{2} \sum_{i=m-r+1}^m \norm{U_{ie\star}^{(s)}}^2 \\
        & - C \sum_{i=m-r+1}^m \norm{U^{n+1}_{ie}-U^{(s)}_{ie\star}}^2 - 2\Delta t \int \sum_{i,k=m-r+1}^m U^{n+1}_{ie} a_{0ik}  (\tilde{\bm{b}}- \tilde{\bm{H}}_s) \sum_{l=1}^m a_{kl}\partial_x \bm{U}_{le\star} \diff x\\
		&+ 2 \int \sum_{i,k=m-r+1}^m U^{n+1}_{ie} a_{0ik} E^{n+1}_k \diff x - 2\Delta t \int \sum_{i,k=m-r+1}^m U^{n+1}_{ie} a_{0ik} (\tilde{\bm{b}}- \tilde{\bm{H}}_s) \bm{F}_{k} \diff x\\
		&+ 2 \mu \int \sum_{i,k=m-r+1}^m U^{n+1}_{ie} a_{0ik} \sum_{l=1}^m H_s q_{kl}(\bm{U}_{le}-\bm{U}_{le\star}) \diff x  + C(\Delta t^4 + \frac{1}{N^8})\frac{\mu}{1+\mu}.
	\end{aligned}
\end{equation}
Since $\bm{A}_0 \bm{Q} = \diag(0, -\bar{\bm{S}})$, the last integral can be rewritten as 
\begin{equation}\nonumber
    \begin{aligned}
         &2 \mu \int \sum_{i,k=m-r+1}^m U^{n+1}_{ie} a_{0ik} \sum_{l=1}^m \bm{H}_s q_{kl}(\bm{U}_{le}-\bm{U}_{le\star}) \diff x\\
         ={}& -2 \mu \int \sum_{i, l=m-r+1}^m U^{n+1}_{ie} \bm{H}_s \bar{S}_{il}(\bm{U}_{le}-\bm{U}_{le\star}) \diff x.
    \end{aligned}
\end{equation}
Note that 
\begin{equation}\nonumber
    \begin{aligned}
	   &vec(\bm{U}_{1e} - \bm{U}_{1e\star}, \cdots, \bm{U}_{me} - \bm{U}_{me\star}) 
       = \bm{\Phi} vec(\bm{E}_1,  ~\cdots, ~\bm{E}_m).
  \end{aligned}
\end{equation}
The elements in the matrix $\mu ((\bm{A}_0 \bm{Q}) \otimes \bm{H}_s)\bm{\Phi}$ are bounded by  $O(\frac{\mu}{1+\mu})$ from Lemma \ref{lemma:muH}, which means that the elements in the matrix $-\mu (\bar{\bm{S}} \otimes \bm{H}_s)\bm{\Phi}$ also are bounded by  $O(\frac{\mu}{1+\mu})$. 
This fact combined with $\norm{\bm{E}} = O(\Delta t^2)$ helps us improve the worst stiff term estimate as 
\begin{equation}\nonumber
    \begin{aligned}
       & \bigg| 2\mu \int \sum_{i,k=m-r+1}^m U^{n+1}_{ie} a_{0ik} \sum_{l=1}^m \bm{H}_s q_{kl}(\bm{U}_{le}-\bm{U}_{le\star}) \diff x \bigg| \\
       \leq {}& \frac{C_1\mu}{1+\mu}\sum_{i=m-r+1}^m\norm{U^{n+1}_{ie}}^2 + C \frac{1+\mu}{\mu} \max_{1\leq l\leq m} \norm{\mu (\bar{\bm{S}} \otimes \bm{H}_s)\bm{\Phi} \bm{E}_{l}}^2\\
        \leq{}& \frac{C_1\mu}{1+\mu}\sum_{i=m-r+1}^m\norm{U^{n+1}_{ie}}^2 + C \frac{1+\mu}{\mu}\Delta t^4 \frac{\mu^2}{(1+\mu)^2}\\
        = {}& \frac{C_1\mu}{1+\mu}\sum_{i=m-r+1}^m\norm{U^{n+1}_{ie}}^2 + C \Delta t^4 \frac{\mu}{1+\mu}
    \end{aligned}
\end{equation}
with $4C_1=\min\{C, C_{\bm{M}\bm{H}}/2\}$. The previous inequality combined with the following equation 
\begin{equation}\nonumber
    \begin{aligned}
        -C\norm{U^{n+1}_{ie}-U^{(s)}_{ie\star}}^2 - C_{\bm{M}\bm{H}} \mu  \norm{U_{ie\star}^{(s)}}^2 \leq \frac{4C_1\mu}{1+\mu}(-\norm{U^{n+1}_{ie}-U^{(s)}_{ie\star}}^2 -  \norm{U_{ie\star}^{(s)}}^2 )\leq -\frac{2C_1 \mu}{1+\mu} \norm{U^{n+1}_{ie}}^2 
    \end{aligned}
\end{equation}
contributes a total of $-\frac{C_1\mu}{1+\mu}\sum_{i=m-r+1}^m\norm{U^{n+1}_{ie}}^2 + C \Delta t^4 \frac{\mu}{1+\mu}$.
By estimating other terms as we did before which gives $O(\Delta t^5) $ in total, we obtain
\begin{equation}\label{r-err-V-2rd-1}
    \begin{aligned}
       &\int \sum_{j,k=m-r+1}^m U^{n+1}_{ie} a_{0jk} U^{n+1}_{ke} \diff x\\ 
       \leq {}& (1+ C\Delta t)\int \sum_{j, k=m-r+1}^m a_{0jk} U^{n}_{je} U^{n}_{ke} \diff x + \frac{\mu}{1+\mu}(C(\Delta t^4 + \frac{1}{N^8}) - C_1 \sum_{i=m-r+1}^m \norm{U^{n+1}_{ie}}^2) \\
       \leq {}& (1+ C\Delta t)\int \sum_{j, k=m-r+1}^m a_{0jk} U^{n}_{je} U^{n}_{ke} \diff x + \frac{\mu}{1+\mu}(C(\Delta t^4 + \frac{1}{N^8}) \\
       &- C_2 \int \sum_{j,k=m-r+1}^m U^{n+1}_{ie} a_{0jk} U^{n+1}_{ke} \diff x).
    \end{aligned}
\end{equation} 

Next, from \eqref{r-err-V-2rd-1}, we prove the following estimate by mathematical induction:
\begin{equation}\label{r-err-V-2rd}
    \begin{aligned}
        \int \sum_{j,k=m-r+1}^m U^{n}_{ie} a_{0jk} U^{n}_{ke} \diff x \leq K (\Delta t^4 + \frac{1}{N^8}), \quad \forall n\ge 0.
    \end{aligned}
\end{equation}
Here, $K>0$ is a constant to be determined later, which is independent of $\varepsilon$ and $\Delta t$.
For $n=0$, the above formula is obviously true due to the initial projection. Assuming that \eqref{r-err-V-2rd} holds for any integer no larger than $n$, we will prove it also holds for $n+1$.
We rewrite \eqref{r-err-V-2rd-1} as follows:
\begin{equation}\nonumber
\begin{aligned}
    \int \sum_{j,k=m-r+1}^m U^{n+1}_{ie} a_{0jk} U^{n+1}_{ke} \diff x \leq \frac{1+C \Delta t}{\alpha} \int \sum_{j,k=m-r+1}^m U^{n}_{ie} a_{0jk} U^{n}_{ke} \diff x + \frac{\beta}{\alpha} ( \Delta t^4 + \frac{1}{N^8})
\end{aligned}    
\end{equation}
with 
\begin{equation}\nonumber
    \alpha := 1 + \frac{\mu}{1+\mu} C_2, \quad \beta := \frac{\mu}{1+\mu} C,
\end{equation}
From the assumption in the induction, we have
\begin{equation}\nonumber
\begin{aligned}
    \int \sum_{j,k=m-r+1}^m U^{n+1}_{ie} a_{0jk} U^{n+1}_{ke} \diff x \leq \frac{1+C \Delta t}{\alpha} K (\Delta t^4 + \frac{1}{N^8}) + \frac{\beta}{\alpha} ( \Delta t^4 + \frac{1}{N^8}).
\end{aligned}    
\end{equation}
Thus \eqref{r-err-V-2rd} holds for $n+1$ if 
\begin{equation*}
    \frac{1+C \Delta t}{\alpha} K (\Delta t^4 + \frac{1}{N^8}) + \frac{\beta}{\alpha} ( \Delta t^4 + \frac{1}{N^8}) \le K (\Delta t^4 + \frac{1}{N^8}),
\end{equation*}
which is equivalent to
\begin{equation*}
\begin{aligned}
    (\alpha - 1 - C\Delta t) K &\ge \beta, \\
    \Leftrightarrow (\frac{\mu}{1+\mu}C_2 - C\Delta t) K &\ge \frac{\mu}{1+\mu}C, \\
    \Leftrightarrow (C_2 - (\varepsilon + \Delta t)C) K &\ge C.
\end{aligned}
\end{equation*}
The above formula holds by taking $\varepsilon + \Delta t \le \frac{C_2}{2C}$ and $K= \frac{2C}{C_2}$. Hence we prove the estimate \eqref{r-err-V-2rd}.

Next, we improve the error estimate \eqref{err:uim1-1rd} to second-order uniform accuracy for $U_{i}$ for $i = 1, \cdots, m-r$.
Multiplying \eqref{equ:rk-key-equ-2} by $a_{0jk}$, summing over $j, k=1, \ldots, m-r$ and integrating in $x$, we have
\begin{equation}\nonumber
	\begin{aligned}
		&\int \sum_{j, k=1}^{m-r} U^{(s)}_{je\star} a_{0jk}  U^{(s)}_{ke\star} \diff x \\
		\leq {}& \int \sum_{j, k=1}^{m-r} a_{0jk} U^{n}_{je} U^{n}_{ke} \diff x - C_\star \sum_{i=1}^{m-r} \norm{\delta U_{ie\star}}^2 - \Delta t \int \sum_{j, k=1}^{m-r} a_{0jk} \bm{U}^T_{je\star} \bm{M} \tilde{\bm{H}}\sum_{i=1}^m a_{ki}\partial_x \bm{U}_{ie\star} \diff x\\
		& - \Delta t \int \sum_{j, k=1}^{m-r} a_{0jk} \bm{U}^T_{je\star} \bm{M} \tilde{\bm{H}}\bm{F}_{k} \diff x.
	\end{aligned}
\end{equation}
We divide the third term into two parts
\begin{equation}\nonumber
	\begin{aligned}
		&- \Delta t \int \sum_{j, k=1}^{m-r} a_{0jk} \bm{U}^T_{je\star} \bm{M} \tilde{\bm{H}}\sum_{i=1}^m a_{ki}\partial_x \bm{U}_{ie\star} \diff x\\
		={}& - \Delta t \int \sum_{j, k=1}^{m-r} a_{0jk} \bm{U}^T_{je\star} \bm{M} \tilde{\bm{H}}\sum_{i=1}^{m-r} a_{ki}\partial_x \bm{U}_{ie\star} \diff x - \Delta t \int \sum_{j, k=1}^{m-r} a_{0jk} \bm{U}^T_{je\star} \bm{M} \tilde{\bm{H}}\sum_{i=m-r+1}^m a_{ki}\partial_x \bm{U}_{ie\star} \diff x
	\end{aligned}
\end{equation}
Using the estimate in \eqref{r-err-V-2rd}, the second part in the above equation can be estimated by
\begin{equation}\nonumber
	\begin{aligned}
		&- \Delta t \int \sum_{j, k=1}^{m-r} a_{0jk} \bm{U}^T_{je\star} \bm{M} \tilde{\bm{H}}\sum_{i=m-r+1}^m a_{ki}\partial_x \bm{U}_{ie\star} \diff x\\
		\leq {}& C \Delta t \sum_{j=1}^{m-r} \norm{\bm{U}_{je\star}}^2 + C \Delta t \sum_{i=m-r+1}^m \norm{\partial_x \bm{U}_{ie\star}}^2\\
        \leq {}& C \Delta t \sum_{j=1}^{m-r}\norm{U^n_{je\star}}^2 + C \Delta t \sum_{j=1}^{m-r}\sum_{l=1}^s \norm{U^{(l)}_{je\star}-U^n_{je\star}}^2 + C \Delta t^5.
	\end{aligned}
\end{equation}
By estimating other terms in the same way as before, we obtain
\begin{equation}\nonumber
	\begin{aligned}
		\int \sum_{j, k=1}^{m-r} U^{(s)}_{je\star} a_{0jk}  U^{(s)}_{ke\star} \diff x 
        \leq {} (1+ C\Delta t)\int \sum_{j, k=1}^{m-r} a_{0jk} U^{n}_{je} U^{n}_{ke} \diff x - \frac{C_\star}{2} \sum_{i=1}^{m-r} \norm{\delta U_{ie\star}}^2 + C\Delta t^5.
	\end{aligned}
\end{equation}
Similar to \eqref{err:rk-um1}, we can obtain the following estimate
\begin{equation}\nonumber
	\begin{aligned}
		&\int \sum_{j, k=1}^{m-r} U^{n+1}_{ie} a_{0ik} U^{n+1}_{ke} \diff x \\
        \leq {}& (1+ C\Delta t)\int \sum_{j, k=1}^{m-r} a_{0jk} U^{n}_{je} U^{n}_{ke} \diff x -  \frac{C_\star}{2} \sum_{i=1}^{m-r} \norm{\delta U_{ie\star}}^2 - C \sum_{i=1}^{m-r} \norm{U^{n+1}_{ie}-U^{(s)}_{ie\star}}^2 \\&
        - 2\Delta t \int \sum_{j,k=1}^{m-r} U^{n+1}_{je} a_{0jk}  (\tilde{\bm{b}}- \tilde{\bm{H}}_s) \sum_{l=1}^m a_{kl}\partial_x \bm{U}_{le\star} \diff x+ 2 \int \sum_{j,k=1}^{m-r} U^{n+1}_{je} a_{0jk} E^{n+1}_k \diff x \\
        &- 2\Delta t \int \sum_{j,k=1}^{m-r} U^{n+1}_{je} a_{0jk} (\tilde{\bm{b}}- \tilde{\bm{H}}_s) \bm{F}_{k} \diff x  + C\Delta t^5.
	\end{aligned}
\end{equation}
By estimating other terms in the same way as before, adding them to the previous estimate, we obtain
\begin{equation}\nonumber
	\begin{aligned}
		\int \sum_{j, k=1}^{m-r} U^{n+1}_{je\star} a_{0jk}  U^{(s)}_{ke\star} \diff x 
        \leq  (1+ C\Delta t)\int \sum_{j, k=1}^{m-r} a_{0jk} U^{n}_{je} U^{n}_{ke} \diff x + C\Delta t^5.
	\end{aligned}
\end{equation}
Applying Gronwall's inequality, we can get for $i = 1, \cdots, m-r$
\begin{equation}\label{r-err-W-2rd}
\begin{aligned}
        \norm{U_{ie}^{n+1}}^2_{\bm{A}_0} \leq C(T)\brac{\Delta t^4 + \norm{U_{ie}^0}^2}\leq C(T)\brac{\Delta t^4 + \frac{1}{N^8}}, \qquad \forall n\Delta t \leq T  .
\end{aligned}
\end{equation}
Therefore, according to \eqref{r-err-V-2rd} and \eqref{r-err-W-2rd}, we prove the uniform second-order accuracy.

\section{Third order uniform accuracy}\label{sec:3rd-uniform-accuracy}
Our main result in this section is stated as follows.
\begin{thm}[Third order uniform accuracy of IMEX-RK schemes]\label{thm:5-1-3rd}
    Under the same assumption as in Theorem \ref{theorem:imex-rk-2order-u-a}, further assume
    \begin{itemize}
        \item The IMEX-RK scheme satisfies the standard third-order condition \eqref{equ:cond-3rd-rk}.
        \item The stage order conditions
        \begin{equation}\label{cond:stage-3rd-order}
            \frac{1}{2}c_i^2 = \sum_{j=1}^{i-1} \tilde{h}_{ij}c_j = \sum_{j=1}^{i}h_{ij}c_j, \quad i = 3, \dots, s 
        \end{equation}
        \item The $\textit{vanishing coefficient condition}$
        \begin{equation}\label{condition-5.3-vanishing}
            \tilde{b}_2=0 \quad \textit{and}\quad h_{i, 2} =0, \quad i = 3, \dots, s.
        \end{equation}
        \item The initial data is consistent up to order 8.
    \end{itemize}
    
        Then for any $T > 0$ and integer $n$ with $n \Delta t\leq T$, we have
        \begin{equation}
            \norm{\bm{U}^{n}_e} \leq C\left( \Delta t^6 + \frac{1}{N^{12}} \right),
        \end{equation}
        with $C$ independent of $\varepsilon, N$ and $\Delta t$.
\end{thm}

Among the IMEX-RK schemes considered in this paper, only the IMEX-RK scheme BHR(5,5,3)* given in the Appendix satisfies the assumptions in Theorem \ref{thm:5-1-3rd}, hence it will exhibit at least third-order uniform accuracy in time. 

We will prove this theorem in the rest of this section. Similar to Theorem \ref{theorem:imex-rk-2order-u-a}, we may handle the initial projection error using the property of the Fourier projection \cite{hesthaven2007spectral} and ignore the dependence of $N$ in the rest of the proof. The proof follows the same structure as that of Theorem \ref{theorem:imex-rk-2order-u-a} but is more technical.

\subsection{Local truncation error}
\begin{lem}\label{lem:truncatiom-rk-3rd}
	For a third-order IMEX-RK scheme of type CK with $c_i = \tilde{c}_i, i =1,\cdots, s$, assume it further satisfies condition \eqref{cond:stage-3rd-order} and the initial data is consistent up to order $q\geq 4$. Then
	\begin{equation}\nonumber
		\bm{E}^{(2)} = O(\Delta t^2),\quad \bm{E}^{(i)} = O(\Delta t^3), \quad i = 3, \cdots, s, \quad \bm{E}^{n+1} = O(\Delta t^4),
	\end{equation}
	and similar results hold for their $x-$derivatives up to order $q-4$.
\end{lem}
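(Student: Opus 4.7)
The plan is to parallel the proof of Lemma \ref{lem:truncatiom-rk-2nd} for the second-order case but carry the Taylor expansions one order further, invoking the standard third-order conditions \eqref{equ:cond-3rd-rk} and the stage order conditions \eqref{cond:stage-3rd-order} at the appropriate places. Since \eqref{eq:PDE} is linear, the order conditions that actually get used in the cancellations reduce to moment identities in $\tilde{\bm{b}}, \bm{b}, \tilde{\bm{c}}, \bm{c}$, so the full list in \eqref{equ:cond-3rd-rk} is more than enough.

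First I would record the regularity estimates produced by Lemma \ref{thm:regularity-const} at the level needed here. With initial data consistent up to $q \geq 4$, we have $\norm{\partial_t^{r_1}\partial_x^{r_2}\bm{U}} \leq C$ for $r_1+r_2 \leq 4$ and $\norm{\partial_t^{r_1}\partial_x^{r_2}\bm{W}} \leq C\varepsilon$ for $r_1+r_2 \leq 3$. The key identity that will be used throughout is the PDE itself in the form $\frac{1}{\varepsilon}\bm{Q}\partial_t^k\bm{U} = \partial_t^{k+1}\bm{U} + \bm{A}\partial_x\partial_t^k\bm{U}$, which is uniformly $O(1)$. This, together with the $O(\varepsilon)$ bound on time derivatives of $\bm{W}$, is precisely what turns nominally stiff remainders of the form $\frac{\Delta t^k}{\varepsilon}\partial_t^{k-1}\bm{W}$ into genuine $O(\Delta t^k)$ terms.

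Next I handle the stage errors. For $i=2$, Taylor-expand both sides of the first equation of \eqref{equ:truncation-equ} to first order in $\Delta t$. The $O(\Delta t)$ terms vanish by $c_2 = \tilde{c}_2 = \tilde{h}_{21}$ (non-stiff part) and $c_2 = h_{21}+h_{22}$ (stiff part, after using the PDE to rewrite $\frac{1}{\varepsilon}\bm{Q}\bm{U}$), leaving a quadratic remainder so that $\bm{E}^{(2)} = O(\Delta t^2)$. For $i \geq 3$, extend the expansion to second order. The $O(\Delta t)$ terms cancel as above, and the $O(\Delta t^2)$ terms cancel precisely by the stage order conditions \eqref{cond:stage-3rd-order}, $\frac{1}{2}c_i^2 = \sum_j \tilde{h}_{ij}c_j = \sum_j h_{ij}c_j$, once $\partial_t^2\bm{U}$ is expressed via the PDE as a combination of $\partial_x\partial_t\bm{U}$ and $\frac{1}{\varepsilon}\bm{Q}\partial_t\bm{U}$. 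This yields $\bm{E}^{(i)} = O(\Delta t^3)$ for $i \geq 3$. For $\bm{E}^{n+1}$, expand the second equation of \eqref{equ:truncation-equ} to third order. The $O(\Delta t)$, $O(\Delta t^2)$, and $O(\Delta t^3)$ contributions cancel respectively via $\sum \tilde{b}_j = \sum b_j = 1$, $\sum \tilde{b}_j c_j = \sum b_j c_j = \tfrac{1}{2}$, and $\sum \tilde{b}_j c_j^2 = \sum b_j c_j^2 = \tfrac{1}{3}$, all implied by \eqref{equ:cond-1st-rk}--\eqref{equ:cond-3rd-rk}. Each cancellation again uses $\partial_t^{k+1}\bm{U} = -\bm{A}\partial_x\partial_t^k\bm{U} + \frac{1}{\varepsilon}\bm{Q}\partial_t^k\bm{U}$, so only the moment identities in $\tilde{\bm{b}}, \bm{b}$ enter, while the coupled tree conditions in \eqref{equ:cond-3rd-rk} are automatically compatible by linearity of \eqref{eq:PDE}. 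The resulting $O(\Delta t^4)$ remainder is bounded using $\norm{\partial_t^3\bm{W}} \leq C\varepsilon$ to kill the $1/\varepsilon$ in the stiff tail. The $x$-derivative statement up to order $q-4$ follows by applying the same argument to $\partial_x^\ell \bm{U}$, which satisfies a system of the same form with the same type of bounds from Lemma \ref{thm:regularity-const}.

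The main obstacle I anticipate is bookkeeping: tracking every $1/\varepsilon$ factor across the third-order expansion to confirm that each stiff residual is either removed by an order condition or absorbed by the $O(\varepsilon)$ regularity of a time derivative of $\bm{W}$, and verifying cleanly that linearity of \eqref{eq:PDE} collapses the full list of third-order coupled tree conditions to just the three scalar moment identities used above, so that no hidden coupling condition from \eqref{equ:cond-3rd-rk} is silently required to close the cancellation.
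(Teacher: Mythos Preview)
Your proposal is correct and is exactly the approach the paper intends: the paper simply states that the proof is similar to Lemma~\ref{lem:truncatiom-rk-2nd} and omits it, so your plan to carry the Taylor expansion one order further, use the stage order conditions \eqref{cond:stage-3rd-order} to kill the $O(\Delta t^2)$ stage terms for $i\ge 3$, and use the moment identities $\sum \tilde b_j c_j^2=\sum b_j c_j^2=\tfrac13$ together with $\norm{\partial_t^3\bm W}\le C\varepsilon$ for $\bm E^{n+1}$ is precisely what is required. Your observation that, because the truncation errors in \eqref{equ:truncation-equ} sample the exact solution at $t_n+c_j\Delta t$, only the scalar moment conditions in $\tilde{\bm b},\bm b,\bm c$ are needed (not the full coupled tree conditions in \eqref{equ:cond-3rd-rk}) is also correct and worth noting.
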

The proof of this lemma is similar to Lemma \ref{lem:truncatiom-rk-2nd} and thus omitted.

\subsection{Energy estimates for the error}
Starting from equation \eqref{equ:energy-estimate-Vstar}, we do a further change of variable in order to absorb the last term involving $\bm{F}_{k}$ in equation.
We absorb the error vectors by introducing $\bm{U}_{ke\star\star}$ which satisfies
\begin{equation}\label{equ:u-star2-def-1}
	-\Delta t \tilde{\bm{H}} \bm{F}_{k} = (\bm{U}_{ke\star} - \bm{U}_{ke\star\star}) - \mu \bm{H} \sum_{l=1}^m q_{kl}(\bm{U}_{le\star} - \bm{U}_{le\star\star}), 
\end{equation}
Equation \eqref{equ:energy-estimate-Vstar} can be rewritten as 
\begin{equation}\label{equ:u-star2-equ}
	\begin{aligned}
		\bm{U}_{ke\star\star} = U_{ke}^{(1)}\bm{e} - \Delta t\tilde{\bm{H}}\sum_{l=1}^m a_{kl}\partial_x \bm{U}_{le\star\star} + \mu \bm{H} \sum_{l=1}^m q_{kl}\bm{U}_{le\star\star} - (\Delta t)^2 \tilde{\bm{H}}\bm{J}_{k},
	\end{aligned}
\end{equation}
where
\begin{equation*}
    \begin{aligned}
         \Delta t \bm{J}_{k} = \sum_{l=1}^m a_{kl}\partial_x (\bm{U}_{le\star} - \bm{U}_{le\star\star}).
    \end{aligned}
\end{equation*}
Similarly, denote a matrix
\begin{equation}\nonumber
    \bm{V}_{\star} = (\bm{U}_{1e\star} - \bm{U}_{1e\star\star}, \bm{U}_{2e\star} - \bm{U}_{2e\star\star}, \cdots, \bm{U}_{me\star} - \bm{U}_{me\star\star})\in \mathbb{R}^{s\times m}.
\end{equation}
According to relations \eqref{relation-eu} and \eqref{equ:u-star2-def-1}, we can obtain have 
\begin{equation}\nonumber
    vec(\bm{V}_{\star}) = -\Delta t \bm{\Phi} vec(\tilde{\bm{H}} \bm{\bm{F}_{U}}), \qquad \bm{\bm{F}_{U}} = (\bm{F}_{1},\ldots, \bm{F}_{n}).
\end{equation}

\subsubsection{Proof of the second order uniform accuracy}\label{subsubsec:2nd-order-2}

We will first prove the second-order uniform accuracy of the scheme in this subsection, and then improve it to third-order uniform accuracy in Section \ref{subsubsec:3rd-order}.

Multiplying the $\bm{U}_{ke\star\star}$ equation with $\bm{U}_{je\star\star}^T \bm{M}$, we get
\begin{equation}\nonumber
	\begin{aligned}
		\bm{U}_{je\star\star}^T \bm{M} \bm{U}_{ke\star\star} ={}& \bm{U}_{je\star\star}^T \bm{M} U_{ke\star}^{(1)}\bm{e} - \Delta t \bm{U}_{je\star\star}^T \bm{M} \tilde{\bm{H}}\sum_{l=1}^m a_{kl}\partial_x \bm{U}_{le\star\star}\\
        &+ \mu \bm{U}_{je\star\star}^T \bm{M} \bm{H} \sum_{l=1}^m q_{kl}\bm{U}_{le\star\star} - \Delta t^2 \bm{U}_{je\star\star}^T \bm{M} \tilde{\bm{H}} \bm{J}_{k},
	\end{aligned}
\end{equation}
i.e.,
\begin{equation}\nonumber
	\begin{aligned}
		U^{(s)}_{je\star\star} U^{(s)}_{ke\star\star} ={}& U^{n}_{je} U_{ke}^{n} -  \bm{U}_{je\star\star}^T \bm{M}_\star U_{ke\star\star} - \Delta t \bm{U}_{je\star\star}^T \bm{M} \tilde{\bm{H}}\sum_{l=1}^m a_{kl}\partial_x \bm{U}_{le\star\star}\\
        &+ \mu \bm{U}_{je\star\star}^T \bm{M} \bm{H} \sum_{l=1}^m q_{kl}\bm{U}_{le\star\star} - \Delta t^2 \bm{U}_{je\star\star}^T \bm{M} \tilde{\bm{H}} \bm{J}_{k}.
	\end{aligned}
\end{equation}
Multiplying last equation by $\bm{A}_0 = (a_{0jk})$, summing over $j, k$ and integrated in $x$, we obtain
\begin{equation}\nonumber
	\begin{aligned}
		&\int \sum_{j, k=1}^m a_{0jk}  U^{(s)}_{je\star\star}  U^{(s)}_{ke\star\star} \diff x \\
		={}& \int \sum_{j, k=1}^m a_{0jk} U^{n}_{je} U^{n}_{ke} \diff x - \int \sum_{j, k=1}^m a_{0jk} \bm{U}^T_{je\star\star} \bm{M}_\star \bm{U}_{ke\star\star} \diff x \\
		&- \Delta t \int \sum_{j, k=1}^m a_{0jk} \bm{U}^T_{je\star\star} \bm{M}\tilde{\bm{H}}\sum_{i=1}^m a_{kl}\partial_x \bm{U}_{le\star\star} \diff x\\
		&+ \mu \int \sum_{j, k=1}^m a_{0jk} \bm{U}^T_{je\star\star} \bm{M} \bm{H} \sum_{i=1}^m q_{kl} \bm{U}_{le\star\star} \diff x - \Delta t^2 \int \sum_{j, k=1}^m a_{0jk} \bm{U}^T_{je\star\star} \bm{M} \tilde{\bm{H}} \bm{J}_{k} \diff x.
	\end{aligned}
\end{equation}
Similar to the proof of Theorem \ref{theorem:imex-rk-us}, we get
\begin{equation}\label{equ:r-3or-2-1}
	\begin{aligned}
		&\int \sum_{j, k=1}^m U^{(s)}_{je\star} a_{0jk}  U^{(s)}_{ke\star} \diff x \\
		\leq {}& \int \sum_{j, k=1}^m a_{0jk} U^{n}_{je} U^{n}_{ke} \diff x - C_\star \norm{\delta \bm{U}_{e\star\star}}^2 - \Delta t \int \sum_{j, k=1}^m a_{0jk} \bm{U}^T_{je\star\star} M\tilde{\bm{H}}\sum_{i=1}^m a_{kl}\partial_x \bm{U}_{le\star\star} \diff x\\
		&+ \mu \int \sum_{j, k=1}^m a_{0jk} \bm{U}^T_{je\star\star} M H \sum_{i=1}^m q_{kl} \bm{U}_{le\star\star} \diff x - \Delta t^2 \int \sum_{j, k=1}^m a_{0jk} \bm{U}^T_{je\star\star} M \tilde{\bm{H}}\bm{J}_{k}\diff x\\
		\leq & \int \sum_{j, k=1}^m a_{0jk} U^{n}_{je} U^{n}_{ke} \diff x + C\Delta t \norm{U^{n}_{e\star\star}}^2 -  \frac{3 C_\star}{4} \norm{\delta \bm{U}_{e\star\star}}^2 - \Delta t^2 \int \sum_{j, k=1}^m a_{0jk} \bm{U}^T_{je\star\star} M \tilde{\bm{H}}\bm{J}_{k}\diff x,
	\end{aligned}
\end{equation}
where $C_\star$ in \eqref{eq:err-stability-second-term-RHS} and $\bm{J}_{k}$ consists of linear combinations of $\partial_{xx} \bm{E}^{(i)}$ with $O(1)$ coefficients due to Lemma \ref{lemma:muH}.
To treat the terms with $\bm{J}_{k}$, we have the estimate
\begin{equation}\nonumber
	\begin{aligned}
		&(\Delta t)^2 \bigg|\int a_{0jk} U^{(l)}_{je\star\star} \partial_{xx} E_k^{(i)} \diff x \bigg| \\
        \leq {}& \Delta t^2(\frac{1}{\Delta t} \norm{U^{(l)}_{je\star\star}}^2 + \Delta t \norm{\partial_{xx} E_k^{(i)}}^2 ) \\
        \leq {}& C \Delta t \norm{U^{n}_{le}}^2 + C\Delta t\norm{U^{(l)}_{je\star\star}-U^{n}_{le}}^2 + C \Delta t^7,
	\end{aligned}
\end{equation}
since $\partial_{xx}\bm{E}^{(i)} = O(\Delta t^2)$ for every $j$ by Lemma \ref{lem:truncatiom-rk-2nd}. Therefore, absorbing $C\Delta t\norm{U^{(l)}_{je\star\star}-U^{n}_{le}}^2$ by the good term $ -\frac{C_\star}{4} \norm{\delta \bm{U}_{e\star\star}}^2$ for $C \Delta t \leq \frac{C_\star}{4}$, we get
\begin{equation}\label{err:ue-starstar}
	\begin{aligned}
		\int \sum_{j, k=1}^m U^{(s)}_{je\star\star} a_{0jk}  U^{(s)}_{ke\star\star} \diff x 
		\leq  (1+ C\Delta t)\int \sum_{j, k=1}^m a_{0jk} U^{n}_{je} U^{n}_{ke} \diff x - \frac{C_\star}{2} \norm{\delta \bm{U}_{e\star\star}}^2 + O(\Delta t^7).
	\end{aligned}
\end{equation}

Note the $U_{ke}^{n+1}$ equation \eqref{equ:umplus1-star} is
\begin{equation}\nonumber
	\begin{aligned}
		U^{n+1}_{ke} ={}& U_{ke}^{n} - \Delta t \tilde{\bm{b}} \sum_{l=1}^m  a_{kl}\partial_x \bm{U}_{le\star} + \mu  \bm{H}_{s} \sum_{l=1}^m q_{kl}U_{le\star} + E^{n+1}_k \\
		&- \Delta t\tilde{\bm{b}} \bm{F}_{k} + \mu \bm{H}_s \sum_{l=1}^m q_{kl}\bm{V}_{l},
	\end{aligned}
\end{equation}
We rewrite it with $\bm{U}_{ke\star\star}$ as 
\begin{equation}\nonumber
	\begin{aligned}
		U^{n+1}_{ke} ={}& U_{ke}^{n} - \Delta t\sum_{l=1}^m \tilde{\bm{b}} a_{kl}\partial_x \bm{U}_{le\star\star} + \mu  \sum_{l=1}^m \bm{H}_{s} q_{kl}U_{le\star\star} + E^{n+1}_k \\
		&- \Delta t\tilde{\bm{b}} \bm{F}_{k} + \mu\sum_{l=1}^m \bm{H}_s q_{kl}\bm{V}_{l} - (\Delta t)^2 \tilde{\bm{b}} \bm{J}_{k} + \mu\sum_{l=1}^m \bm{H}_s q_{kl}\bm{V}_{l\star}
	\end{aligned}
\end{equation}
where $\bm{H}_{s}$ denotes the last row of the matrix $\bm{H}$ (and similar notation is used for the last row of other matrices).

Subtracting with last rows of the vector equations \eqref{equ:u-star2-equ} of $\bm{U}_{ke\star\star}$, we get
\begin{equation}\nonumber
	\begin{aligned}
		U^{n+1}_{ke} ={}& U_{ke\star\star}^{(s)} - \Delta t (\tilde{\bm{b}}- \tilde{\bm{H}}_s) \sum_{l=1}^m a_{kl}\partial_x \bm{U}_{le\star\star} + E^{n+1}_k - \Delta t \tilde{\bm{b}} \bm{F}_{k} \\
  &+ \mu\sum_{l=1}^m \bm{H}_s q_{kl}\bm{V}_{l} - \Delta t^2 (\tilde{\bm{b}}- \tilde{\bm{H}}_s) \bm{J}_{k} + \mu\sum_{l=1}^m \bm{H}_s q_{kl}\bm{V}_{l\star}.
	\end{aligned}
\end{equation}
Multiplying above equation by $2U^{n+1}_{ie} a_{0ik} $ respectively, summing over $i, k$ and integrating in $x$ gives the energy estimate
\begin{equation}\label{err:rk-um1star}
	\begin{aligned}
		&\int \sum_{i,k=1}^m U^{n+1}_{ie} a_{0ik} U^{n+1}_{ke} \diff x \\
        ={}& \int \sum_{i,k=1}^m U^{(s)}_{ie\star\star} a_{0ik} U_{ke\star\star}^{(s)} \diff x -  \int \sum_{i,k=1}^m (U^{n+1}_{ie}-U^{(s)}_{ie\star\star}) a_{0ik} (U^{n+1}_{ke} - U_{ke\star\star}^{(s)}) \diff x \\
		& - 2\Delta t \int \sum_{i,k=1}^m U^{n+1}_{ie} a_{0ik}  (\tilde{\bm{b}}- \tilde{\bm{H}}_s) \sum_{l=1}^m a_{kl}\partial_x \bm{U}_{le\star\star} \diff x\\
		&+ 2 \int \sum_{i,k=1}^m U^{n+1}_{ie} a_{0ik} E^{n+1}_k \diff x - 2\Delta t \int \sum_{i,k=1}^m U^{n+1}_{ie} a_{0ik} \tilde{\bm{b}} \bm{F}_{k} \diff x\\
		&+ 2 \mu \int \sum_{i,k=1}^m U^{n+1}_{ie} a_{0ik} \sum_{l=1}^m \bm{H}_s q_{kl}\bm{V}_{l} \diff x + 2 \mu \int \sum_{i,k=1}^m U^{n+1}_{ie} a_{0ik} \sum_{l=1}^m \bm{H}_s q_{kl}\bm{V}_{l\star} \diff x \\
        &- 2\Delta t^2 \int \sum_{i,k=1}^m U^{n+1}_{ie} a_{0ik}(\tilde{\bm{b}}- \tilde{\bm{H}}_s) \bm{J}_{k}  \diff x.
	\end{aligned}
\end{equation}
Conducting a similar energy estimate in Theorem \ref{theorem:imex-rk-us}, and adding with the \eqref{err:ue-starstar} give 
\begin{equation}\nonumber
	\begin{aligned}
		&\int \sum_{i,k=1}^m U^{n+1}_{ie} a_{0ik} U^{n+1}_{ke} \diff x \\
		\leq {}& (1+ C\Delta t)\int \sum_{j, k=1}^m a_{0jk} U^{n}_{je} U^{n}_{ke} \diff x - \frac{C_\star}{2} \norm{\delta \bm{U}_{e\star\star}}^2 -  \int \sum_{i,k=1}^m (U^{n+1}_{ie}-U^{(s)}_{ie\star}) a_{0ik} (U^{n+1}_{ke} - U_{ke\star}^{(s)}) \diff x \\
		&+ 2 \int \sum_{i,k=1}^m U^{n+1}_{ie} a_{0ik} E^{n+1}_k \diff x - 2\Delta t \int \sum_{i,k=1}^m U^{n+1}_{ie} a_{0ik} \tilde{\bm{b}} \bm{F}_{k} \diff x\\
		&+ 2 \mu \int \sum_{i,k=1}^m U^{n+1}_{ie} a_{0ik} \sum_{l=1}^m \bm{H}_s q_{kl}\bm{V}_{l} \diff x + 2 \mu \int \sum_{i,k=1}^m U^{n+1}_{ie} a_{0ik} \sum_{l=1}^m \bm{H}_s q_{kl}\bm{V}_{l\star} \diff x \\
        &- 2\Delta t \int \sum_{i,k=1}^m U^{n+1}_{ie} a_{0ik} (\tilde{\bm{b}}- \tilde{\bm{H}}_s) \bm{J}_{k} \diff x + O(\Delta t^7).
	\end{aligned}
\end{equation}

Here we need to estimate the following three terms:
\begin{equation}\nonumber
    \begin{aligned}
       \mathcal{I}_1 &=  - 2\Delta t \int \sum_{i,k=1}^m U^{n+1}_{ie} a_{0ik} \tilde{\bm{b}} \bm{F}_{k} \diff x,\\
       \mathcal{I}_2 &= 2\mu \int \sum_{i,k=1}^m U^{n+1}_{ie} a_{0ik} \sum_{l=1}^m \bm{H}_s q_{kl}\bm{V}_{l} \diff x,\\
       \mathcal{I}_3 &= 2\mu \int \sum_{i,k=1}^m U^{n+1}_{ie} a_{0ik} \sum_{l=1}^m \bm{H}_s q_{kl}\bm{V}_{l\star} \diff x.
    \end{aligned}
\end{equation}
Other terms only contribute $O(\Delta t^7)$ or terms which can be absorbed. 
In the term $\mathcal{I}_1$, note that $\bm{F}_{k} =\sum_{l=1}^m a_{kl}\partial_x \bm{V}_{l}$. We need to analyse the following component 
\begin{equation}\label{formula-bf-bv}
    \begin{aligned}
         \tilde{\bm{b}} \bm{F}_{k}= \sum_{j=1}^s \tilde{b}^{j}F^{j}_k =\sum_{j=1}^s \sum_{l=1}^m\tilde{b}^{j} a_{kl}\partial_x v_{jl}= \sum_{l=1}^m a_{kl}\partial_x \brac{\sum_{j=1}^s\tilde{b}^{j} v_{jl}}.
    \end{aligned}
\end{equation}
Since $vec(\bm{V}) = \bm{\Phi} vec(\bm{E})$ from \eqref{relation-eu}, using the properties of vectorization and Kronecker product operators, we can  deduce 
\begin{equation}\nonumber
    \begin{aligned}
        v_{jl} = \sum_{r=1}^s\sum_{q=1}^m \Phi_{(l-1)s+j,~(q-1)s+r} e_{rq}.
    \end{aligned}
\end{equation}
Thus, we have
\begin{equation}\label{formula-bv-appendix}
    \begin{aligned}
        \sum_{j=1}^s\tilde{b}^{j} v_{jl}= \sum_{r=1}^s\sum_{q=1}^m \sum_{j=1}^s\tilde{b}^{j} \Phi_{(l-1)s+j,~(q-1)s+r} e_{rq}.
    \end{aligned}
\end{equation}
Since $e_{2q}=O(\Delta t^2)$ and $e_{rq}=O(\Delta t^3)(3\leq r\leq s)$ from Lemma \ref{lem:truncatiom-rk-3rd}, we use Lemma \ref{lem:b-phi-2} in Appendix \ref{sec:appendix} to state $\sum_{j=1}^s\tilde{b}^{j} \Phi_{(l-1)s+j,~(q-1)s+2} = 0$ to eliminate the $O(\Delta t^2)$ terms.

The term $\mathcal{I}_1$ contributes only $O(\Delta t^7)$ due to Lemma \ref{lem:b-phi-2}. In fact, using  \eqref{formula-bf-bv} and \eqref{formula-bv-appendix}, we have
\begin{equation}\nonumber
    \begin{aligned}
         \tilde{\bm{b}} \bm{F}_{k}= \sum_{l=1}^m a_{kl}\partial_x \brac{\sum_{j=1}^s\tilde{b}^{j} v_{jl}}= \sum_{l=1}^m a_{kl} \brac{\sum_{r=1}^s\sum_{q=1}^m \sum_{j=1}^s\tilde{b}^{j} \Phi_{(l-1)s+j,~(q-1)s+r} \partial_x e_{rq}}.
    \end{aligned}
\end{equation}
Each term $\tilde{b}^{j} \Phi_{(l-1)s+j,~(q-1)s+r} \partial_x \bm{E}_q^{r}$ is $O(\Delta t^3)$, because the second component of the coefficient vector vanishes due to Lemma \ref{lem:b-phi-2}, and the remaining components satisfy $\partial_x  \bm{E}_q^{r} = O(\Delta t^3)$ by Lemma \ref{lem:truncatiom-rk-3rd}.

The term $\mathcal{I}_2$ gives a contribution of $O(\Delta t^5 \frac{\mu^2}{(1+\mu)^2})$ for the same reason, combining with the fact $\mu \left((\bm{A}_0 \bm{Q}) \otimes \bm{H}\right)\bm{\Phi} = O(\frac{\mu}{1+\mu})$ from Lemma \ref{lemma:muH}. 

Recall that
\begin{equation}\nonumber
    vec(\bm{V}_{\star}) = -\Delta t \bm{\Phi} vec(\tilde{\bm{H}} \bm{\bm{F}_{U}}), \qquad \bm{\bm{F}_{U}} = (\bm{F}_{1},\ldots, \bm{F}_{n}).
\end{equation}
The term $\mathcal{I}_3$ gives a contribution of $O(\Delta t^5 \frac{\mu^2}{(1+\mu)^2})$ owing to Lemma \ref{lem:truncatiom-rk-3rd} and $\mu \left((\bm{A}_0 \bm{Q}) \otimes \bm{H}\right)\bm{\Phi} = O(\frac{\mu}{1+\mu})$ from Lemma \ref{lemma:muH}.

Therefore we finally get 
\begin{equation}\nonumber
	\begin{aligned}
		\int \sum_{i,k=1}^m U^{n+1}_{ie} a_{0ik} U^{n+1}_{ke} \diff x \leq  (1+ C\Delta t)\int \sum_{j, k=1}^m a_{0jk} U^{n}_{je} U^{n}_{ke} \diff x + O(\Delta t^7 + \Delta t^5\frac{\mu^2}{(1+\mu)^2}).
	\end{aligned}
\end{equation}
Using Gronwall's inequality, we get
\begin{equation}\label{err:uim1}
	\begin{aligned}
		&\int \sum_{i,k=1}^m U^{n+1}_{ie} a_{0ik} U^{n+1}_{ke} \diff x \leq C(T)(\Delta t^6 + \Delta t^4\frac{\mu^2}{(1+\mu)^2} + \frac{1}{N^{12}}),
	\end{aligned}
\end{equation}
which implies second-order uniform accuracy and the desired third-order accuracy if $\varepsilon=O(1)$.
Here we use the fact that the error of the initial value is bounded by 
\begin{equation}\nonumber	 
\norm{\bm{U}_{e}^0}^2=\norm{(\bm{U}^0)_{N}-\bm{U}_{in}}^2 \leq  \frac{C}{N^{12}}.
\end{equation}
In the rest of this proof, we assume $\varepsilon\leq 1$, and $\Delta t^4\frac{\mu^2}{(1+\mu)^2}$ is always the worst term in \eqref{err:uim1}.

\subsubsection{Improve to third order uniform accuracy}\label{subsubsec:3rd-order}
Here we improve the error estimate \eqref{err:uim1} to third-order uniform accuracy. The process of proving the third-order uniform accuracy error is similar to that in Subsection \ref{subsubsec:2nd-order}. We simplify this part and only emphasize the similarities and differences.

We start by revisiting \eqref{equ:r-3or-2-1} as follows:
\begin{equation}\label{5equ:estimate-ustarstar-s}
	\begin{aligned}
		&\int \sum_{j, k=m-r+1}^m U^{(s)}_{je\star\star} a_{0jk}  U^{(s)}_{ke\star\star} \diff x \\
    	\leq {}& \int \sum_{j, k=m-r+1}^m a_{0jk} U^{n}_{je} U^{n}_{ke} \diff x - C_\star \sum_{i=m-r+1}^m \norm{\delta U_{ie\star\star}}^2 \\
        &- \Delta t \int \sum_{j, k=1}^m a_{0jk} \bm{U}^T_{je\star\star} \bm{M} \tilde{\bm{H}}\sum_{i=1}^m a_{ki}\partial_x \bm{U}_{ie\star\star} \diff x\\
		&+ \mu \int \sum_{j, k=m-r+1}^m a_{0jk} \bm{U}^T_{je\star\star} \bm{M} \bm{H} \sum_{i=1}^m q_{ki} \bm{U}_{ie\star\star} \diff x - \Delta t^2 \int \sum_{j, k=m-r+1}^m a_{0jk} \bm{U}^T_{je\star} \bm{M} \tilde{\bm{H}}\bm{J}_{k} \diff x.
	\end{aligned}
\end{equation}
The fourth term in \eqref{5equ:estimate-ustarstar-s} can be bounded by
$- \mu C_{\bm{M}\bm{H}} \sum_{i=m-r+1}^m \norm{U_{ie\star\star}^{(s)}}^2$ by Lemma \ref{lemma:M1A-prop}, and this term together with $-C_\star\sum_{i=m-r+1}^m\norm{\delta U_{ie\star\star}}^2$ can estimate the third term in \eqref{5equ:estimate-ustarstar-s} as in \eqref{equ:r-2or-2-1}. The remaining last term can be controlled by $C\Delta t \norm{U_{e}^n}^2 + C\Delta t \norm{U_{e\star\star}^{(i)} - U_{e\star}^{n}}^2 + O(\Delta t^7)$ as we did before.
Therefore we can get the following estimate
\begin{equation}\nonumber
	\begin{aligned}
		&\int \sum_{j, k=m-r+1}^m U^{(s)}_{je\star\star} a_{0jk}  U^{(s)}_{ke\star\star} \diff x \\
        \leq {}& (1+ C\Delta t)\int \sum_{j, k=m-r+1}^m a_{0jk} U^{n}_{je} U^{n}_{ke} \diff x -\frac{C_\star}{2} \sum_{i=m-r+1}^m\norm{\delta U_{ie\star\star}}^2\\
        &-\mu \frac{C_{\bm{M}\bm{H}}}{2} \sum_{j=m-r+1}^m \norm{U_{je\star\star}^{(s)}}^2 + C\frac{\mu}{1+\mu}(\Delta t^6+\frac{1}{N^{12}}).
	\end{aligned}
\end{equation}

Combining the last estimate, \eqref{err:rk-um1star} can be rewritten as
\begin{equation}\nonumber
	\begin{aligned}
		&\int \sum_{i,k=m-r+1}^m U^{n+1}_{ie} a_{0ik} U^{n+1}_{ke} \diff x \\
        \leq {}& (1+ C\Delta t)\int \sum_{j, k=m-r+1}^m a_{0jk} U^{n}_{je} U^{n}_{ke} \diff x - \frac{C_\star}{2}\sum_{i=m-r+1}^m\norm{\delta U_{ie\star\star}}^2 - \mu \frac{C_{\bm{M}\bm{H}} }{2} \sum_{i=m-r+1}^m \norm{U_{ie\star\star}^{(s)}}^2 \\
		&-C\sum_{i=m-r+1}^m \norm{U^{n+1}_{ie}-U^{(s)}_{ie\star\star}}^2 - 2\Delta t \int \sum_{i,k=1}^m U^{n+1}_{ie} a_{0ik}  (\tilde{\bm{b}}- \tilde{\bm{H}}_s) \sum_{l=1}^m A_{kl}\partial_x \bm{U}_{le\star\star} \diff x\\
		&+ 2 \int \sum_{i,k=m-r+1}^m U^{n+1}_{ie} a_{0ik} E^{n+1}_k \diff x - 2\Delta t \int \sum_{i,k=m-r+1}^m U^{n+1}_{ie} a_{0ik} \tilde{\bm{b}} \bm{F}_{k} \diff x\\
		&+ 2 \mu \int \sum_{i,k=m-r+1}^m U^{n+1}_{ie} a_{0ik} \sum_{l=1}^m \bm{H}_s q_{kl}\bm{V}_l \diff x + 2 \mu \int \sum_{i,k=m-r+1}^m U^{n+1}_{ie} a_{0ik} \sum_{l=1}^m \bm{H}_s q_{kl}\bm{V}_{l\star} \diff x \\
       &- 2\Delta t^2 \int \sum_{i,k=m-r+1}^m U^{n+1}_{ie} a_{0ik}(\tilde{\bm{b}}- \tilde{\bm{H}}_s)\bm{J}_{k}  \diff x + C\frac{\mu}{1+\mu}(\Delta t^6+\frac{1}{N^{12}}).
	\end{aligned}
\end{equation}
We can gain a good term $\frac{\mu}{1+\mu}\norm{U^{n+1}_{ie}}^2$, $i=m-r+1,\cdots, m$ out of the good terms $- \mu \frac{C_{\bm{M}\bm{H}} }{2}\sum_{i=m-r+1}^m \norm{U_{ie\star\star}^{(s)}}^2- C \sum_{i=m-r+1}^m \norm{U^{n+1}_{ie}-U^{(s)}_{ie\star\star}}$. 
This helps us improve the worst stiff term estimate as 
\begin{equation}\nonumber
    \begin{aligned}
       \bigg| 2 \mu \int \sum_{i,k=m-r+1}^m U^{n+1}_{ie} a_{0ik} \sum_{l=1}^m \bm{H}_s q_{kl}\bm{V}_l \diff x \bigg| 
        \leq {} C\frac{\mu}{1+\mu}\sum_{i=m-r+1}^m\norm{U^{n+1}_{ie}}^2 + C \frac{\mu}{1+\mu}(\Delta t^6 + \frac{1}{N^{12}}).
    \end{aligned}
\end{equation}
By estimating other terms as we did before, which gives $O(\Delta t^7) $ in total, we obtain
\begin{equation}\nonumber
    \begin{aligned}
       &\int \sum_{i,k=m-r+1}^m U^{n+1}_{ie} a_{0ik} U^{n+1}_{ke} \diff x\\ 
       \leq {}& (1+ C\Delta t)\int \sum_{j, k=m-r+1}^m a_{0jk} U^{n}_{je} U^{n}_{ke} \diff x + \frac{\mu}{1+\mu}(C\Delta t^6 - C\sum_{i=m-r+1}^m \norm{U^{n+1}_{ie}}^2).
    \end{aligned}
\end{equation}
Using the same induction method as in Subsection \ref{subsubsec:2nd-order}, we can get
\begin{equation}\nonumber
    \begin{aligned}
        \norm{U_{ie}^{n+1}}^2 \leq C(T) (\Delta t^6+\frac{1}{N^{12}}), \qquad \forall n\Delta t \leq T, \quad i=m-r+1,\cdots, m.
    \end{aligned}
\end{equation}

The proof of the third-order accuracy for $U_{i}$ for $i = 1, \cdots, m-r$  closely resembles that in \ref{subsubsec:2nd-order}; therefore, we omit the details here.

\section{Numerical tests}\label{sec:numerical-test}
In this section, we numerically verify the accuracy of some IMEX-RK schemes applied to two linearized hyperbolic relaxation systems including the Broadwell model \cite{broadwell1964shock} and the Grad's moment system \cite{grad1949kinetic,cai2014cpam}. 
We consider four IMEX-RK schemes: ARS(2,2,2) \cite{ascher1997}, ARS(2,3,2) \cite{ascher1997}, ARS(4,4,3) \cite{ascher1997}, and BHR(5,5,3)* \cite{boscarino2009siam}. Their double Butcher tableau are also included in Appendix \ref{subsec:some-imex-rk-scheme}.
In all the numerical tests, we adopt the Fourier-Galerkin spectral method for spatial discretization with modes $|k|\leq N$ and fix $N=40$ to ensure that the discretization error in space is much smaller than that in time. The reference solution $\bm{U}_{ref}$ is computed with a much finer time step.

\subsection{Broadwell model}

The Broadwell model is a simplified discrete velocity model for the Boltzmann equation \cite{broadwell1964shock}. It describes a two-dimensional (2D) gas as composed of particles of only four velocities with a binary collision law and spatial variation in only one direction. When looking for one-dimensional solutions of the 2D gas, the evolution equations of the model are given by 
\begin{equation}\nonumber
	\begin{aligned}
		\partial_t f_{+} + \partial_x f_{+} &= -\frac{1}{\varepsilon}(f_{+}f_{-} - f_0^2),\\
		\partial_t f_{-} - \partial_x f_{-} &= -\frac{1}{\varepsilon}(f_{+}f_{-} - f_0^2),\\
		\partial_t f_{0} &= \frac{1}{\varepsilon}(f_{+}f_{-} - f_0^2).
	\end{aligned}
\end{equation}
Here $f_{+}$, $f_{-}$ and $f_0$ denote the particle density function at time $t$, position $x$ with velocity $1$, $-1$ and $0$, respectively, $\varepsilon>0$ is the mean free path. 
Set 
\begin{equation}\nonumber
	\rho = f_{+} + 2f_0 + f_{-}, \quad m = f_{+} - f_{-}, \quad z = f_{+} + f_{-}.
\end{equation}
The Broadwell equations can be rewritten as 
\begin{equation}\nonumber
	\begin{aligned}
		\partial_t \rho + \partial_x m &= 0,\\
		\partial_t m + \partial_x z &= 0,\\
		\partial_t z + \partial_x m &=  \frac{1}{2\varepsilon}( \rho^2 + m^2 - 2\rho z).
	\end{aligned}
\end{equation}

A local Maxwellian is the density function that satisfies $z = \frac{1}{2\rho}(\rho^2 + m^2)$. Considering the linearized version at $\rho_\star = 2,  m_\star=0, z_\star=1 $, we obtain the linearized Broadwell system as follows
\begin{equation}\nonumber
    \begin{aligned}
        \partial_t \bm{U} + \bm{A} \partial_x \bm{U} = \frac{1}{\varepsilon}\bm{Q} \bm{U},
    \end{aligned}
\end{equation}
with 
\begin{equation}\nonumber
    \begin{aligned}
        \bm{U} =  (\rho, ~m, ~z)^T, \qquad \bm{A}=\begin{pmatrix}
            0 & 1 & 0\\
            0 & 0 & 1\\
            0 & 1 & 0\\
        \end{pmatrix}, \qquad \bm{Q}=\begin{pmatrix}
            0 & 0 & 0\\
            0 & 0 & 0\\
            1 & 0 & -2\\
            \end{pmatrix}.
    \end{aligned}
\end{equation}
It has been shown in \cite{yong_singular_1999} that the Broadwell model satisfies the structural stability condition.

In our numerical test, the computational spatial domain is $[-\pi, \pi]$ with periodic boundary conditions and the initial data of $\rho$ and $m$ are given by
\begin{equation}\nonumber
    \begin{aligned}
        \rho(x, 0) = 0.5 \exp(0.3\sin(2x)), \quad  m(x, 0) = \rho(x, 0) \left(0.5 + 0.05\cos(2x) \right), \quad z(x, 0) = \frac{1}{2}\rho(x, 0).
    \end{aligned}
\end{equation}
To avoid the initial layer or prepare the initial data satisfying the conditions of Theorem \ref{thm:regularity-const}, we start the computation from time $T_0=1$ which are computed using BHR(5,5,3)* with a much smaller time step $\Delta t = 10^{-5}$. 
We compute the solution to time $T=2$ and estimate the error of the solutions $\bm{U}_{\Delta t}$ as $\norm{\bm{U}_{\Delta t}-\bm{U}_{ref}}$. 

The numerical results are presented in Figures \ref{fig:Broadwell-RK} and \ref{fig:Broadwell-order}, from which the desired uniform second- and third-order accuracy can be observed for the ARS(2,2,2), ARS(2,3,2) and BHR(5,5,3)* schemes, respectively. Meanwhile, ARS(4,4,3), although being third-order when $\varepsilon=O(1)$ and $\varepsilon\rightarrow 0$, suffers from order reduction in the intermediate regime. The numerical results are in perfect agreement with our theoretical analysis.

\begin{figure}[htbp!]
\centering
\subfigure{\includegraphics[width=0.45\textwidth]{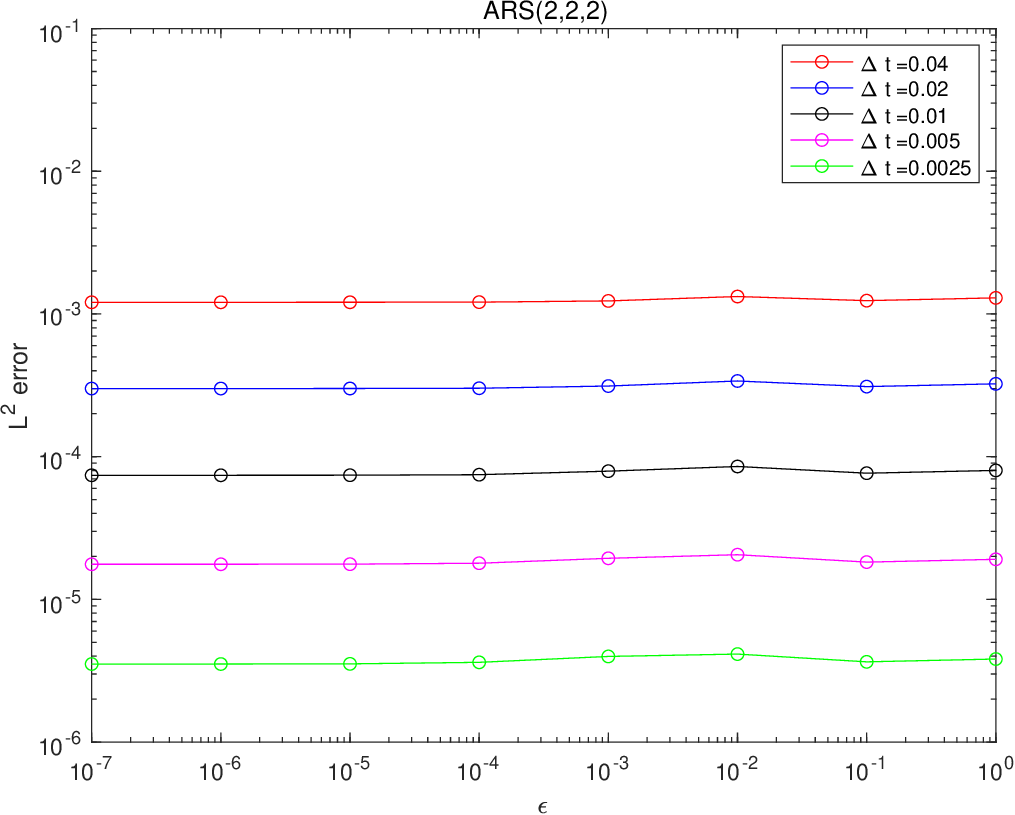}}
\subfigure{\includegraphics[width=0.45\textwidth]{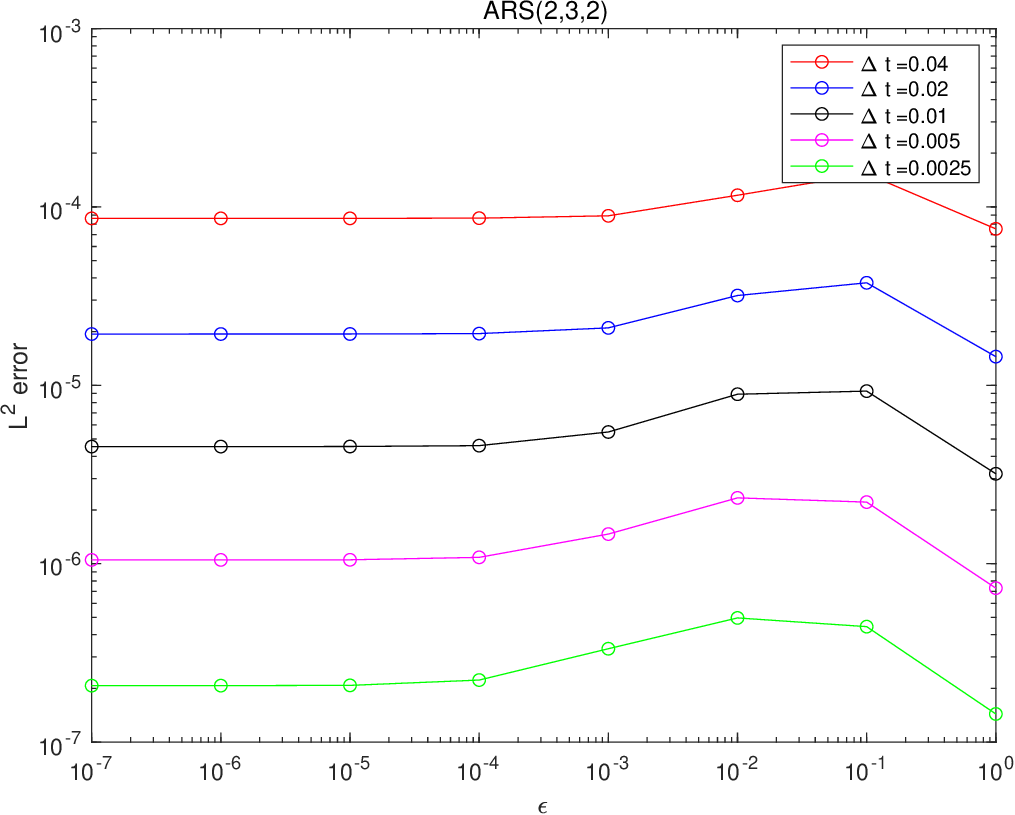}}
\subfigure{\includegraphics[width=0.45\textwidth]{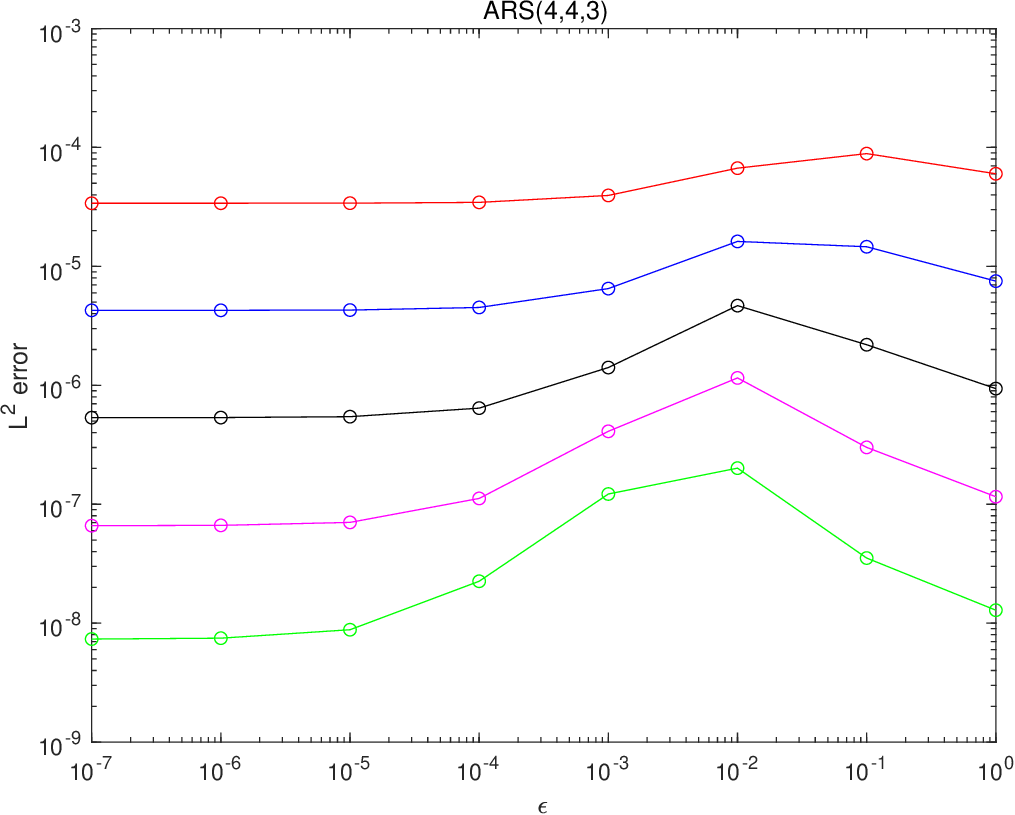}}
\subfigure{\includegraphics[width=0.45\textwidth]{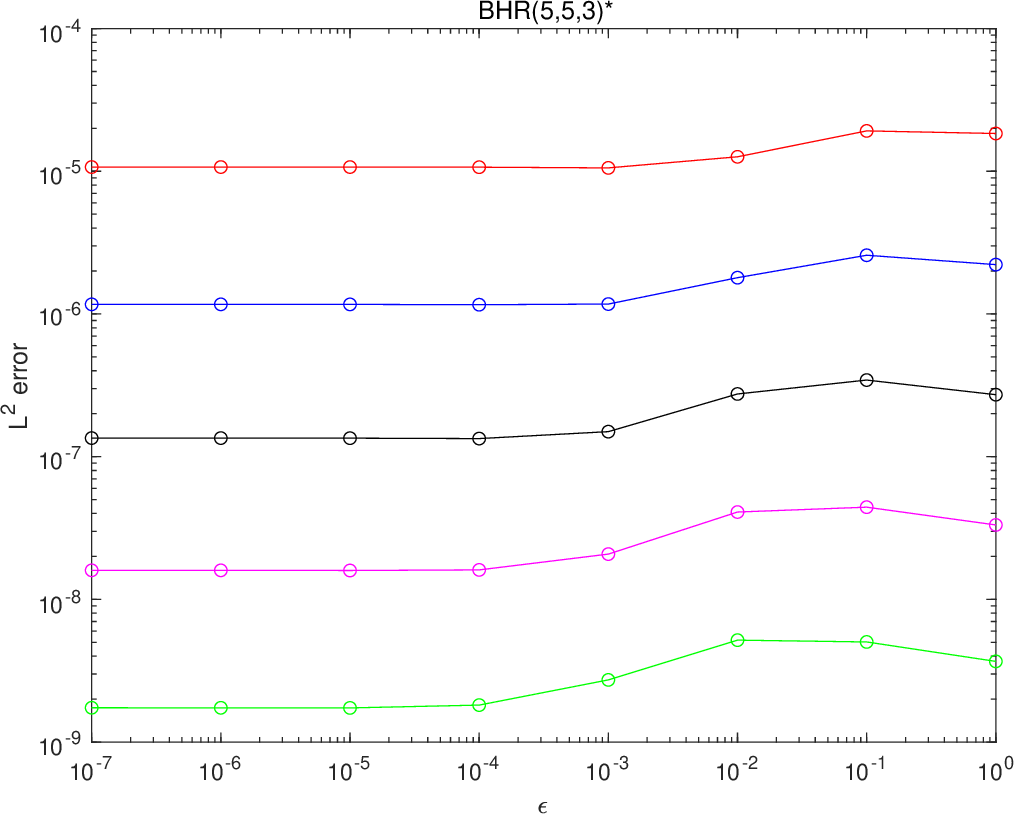}}
\caption{Broadwell system. The $L^2$ error of the solutions computed by IMEX-RK schemes. Top left, top right, bottom left and bottom right figures: ARS(2,2,2), ARS(2,3,2), ARS(4,4,3), BHR(5,5,3)* schemes, respectively. In these four sufigures, horizontal axis ranges from $1e-7$ to $1$, and different curves represent different values of $\Delta t$ as shown in the top left figure. }
\label{fig:Broadwell-RK}
\end{figure}

\begin{figure}[h] 
    \centering 
    \includegraphics[width=0.5\linewidth]{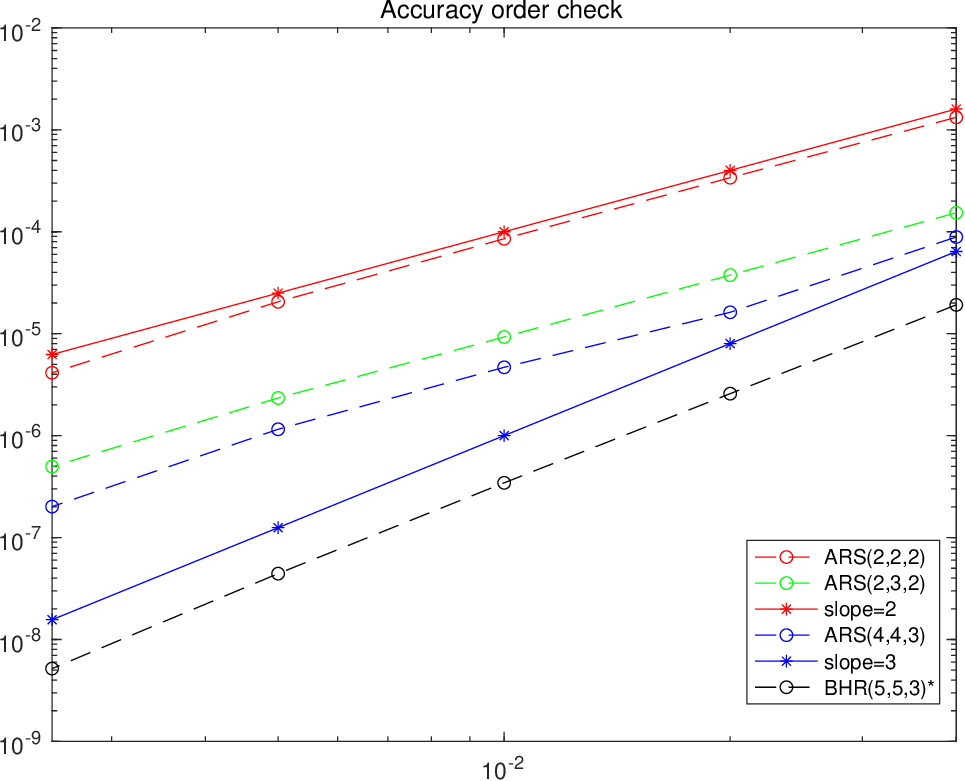} 
    \caption{Broadwell system. The accuracy order for these four IMEX-RK schemes, is obtained as follows: for each scheme, take the maximal $L^2$ error among all values of for a fixed $\Delta t$.} 
    \label{fig:Broadwell-order} 
\end{figure}

\subsection{Linearized Grad's moment system} 
The linearized Grad's moment system in 1D \cite{grad1949kinetic,cai2014cpam,zhao2017stability} reads as
\begin{equation}\label{equ:linear-moment-equ}
	\partial_t \bm{U} + \bm{A}\partial_x \bm{U} = \frac{1}{\varepsilon}\bm{Q}\bm{U}
\end{equation}
with
\begin{equation}\nonumber
	\begin{aligned}
		\bm{U} = \begin{pmatrix}
			\rho \\ w \\ \theta/\sqrt{2} \\ \sqrt{3!}f_3 \\ \vdots \\\sqrt{M!}f_M
		\end{pmatrix},
		\bm{A} = \begin{pmatrix}
			0 & 1 & & & &  \\
			1 & 0 & \sqrt{2} & & & \\
			& \sqrt{2} & 0 & \sqrt{3} & &   \\
			& & \sqrt{3} & 0 & \ddots &   \\
			& & & \ddots & 0 & \sqrt{M} \\
			& & & & \sqrt{M} & 0			
		\end{pmatrix},
		\bm{Q} = -\diag(0, 0, 0, \underbrace{1, \cdots, 1}_{M-2}).
	\end{aligned}
\end{equation}
In the above equation, $\rho$ is the density, $w$ is the macroscopic velocity, $\theta$ is the temperature and $f_3, \cdots, f_M$ with $M\geq 3$ are high order moments. The moment system is obtained by taking moments on both sides of the Bhatnagar-Gross-Krook (BGK) model \cite{Bhatnagar1954511}. 
It was shown in \cite{Di2017nm,zhao2017stability} that the moment system satisfies the structural stability condition.
Here we only consider its linearized version.

The spatial domain is taken as $x\in [-\pi, \pi]$ with periodic boundary conditions.
We solve the linearized Grad’s moment system \eqref{equ:linear-moment-equ} with $M=5$.  
The initial data are prepared by
\begin{equation}\nonumber
	(\rho, ~ w, ~ \theta)(x, 0) = \left( \sin(2x)+1.1, ~ 0, ~ \sqrt{2} \right),\qquad (f_3, f_4, f_5) = (0, ~ 0, ~ 0).
\end{equation}
We compute the solution to time $T=2$ with $\Delta t =\frac{32}{N^2}\cdot 2^{-k}, k=1, 2,\cdots,6$ and estimate the error of the solution $\bm{U}_{\Delta t}$ as $\norm{\bm{U}_{\Delta t}-\bm{U}_{ref}}$. 
The numerical results are in Figure \ref{fig:LinearMoment-RK} and \ref{fig:LinearMoment-order} . We can observe that the numerical results are in perfect agreement with our theoretical analysis. Both the ARS(2,2,2) and ARS(2,3,2) schemes exhibit the uniform second-order accuracy, and the BHR(5,5,3)* scheme achieve the uniform third-order accuracy. The ARS(4,4,3) scheme attains third-order in the regimes $\varepsilon=O(1)$ and $\varepsilon\rightarrow0$, but suffers from order reduction in the intermediate regime.

\begin{figure}[htbp!]
\centering
\subfigure{\includegraphics[width=0.45\textwidth]{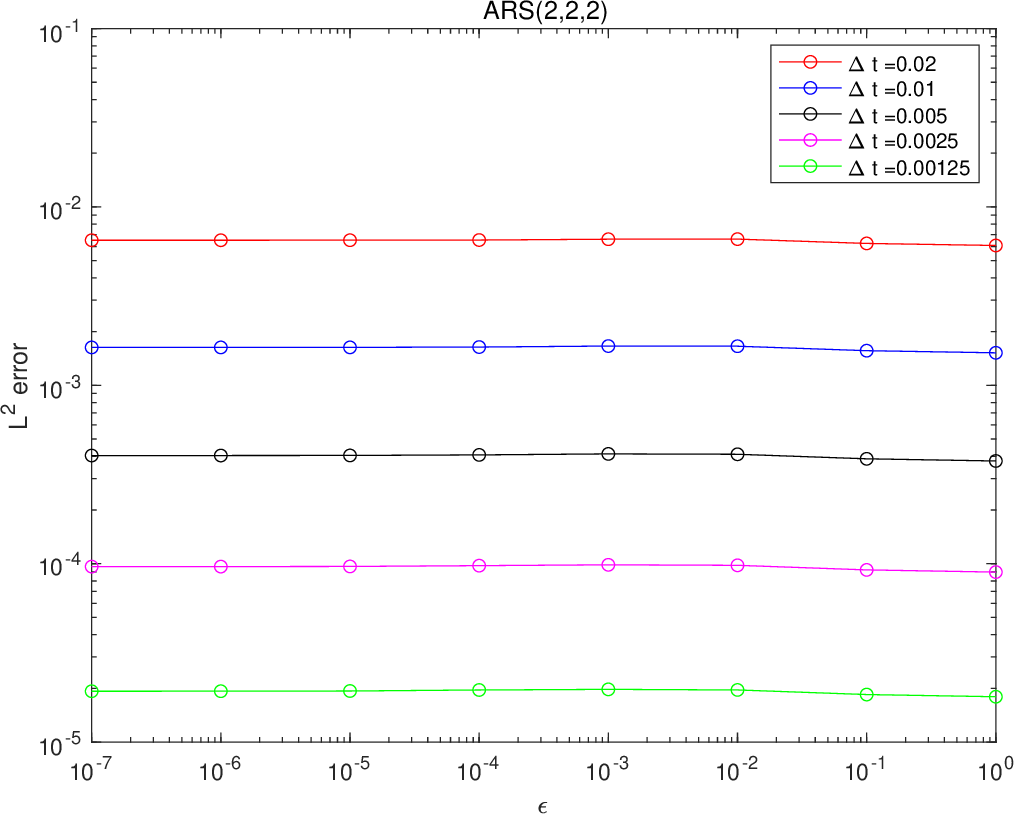}}
\subfigure{\includegraphics[width=0.45\textwidth]{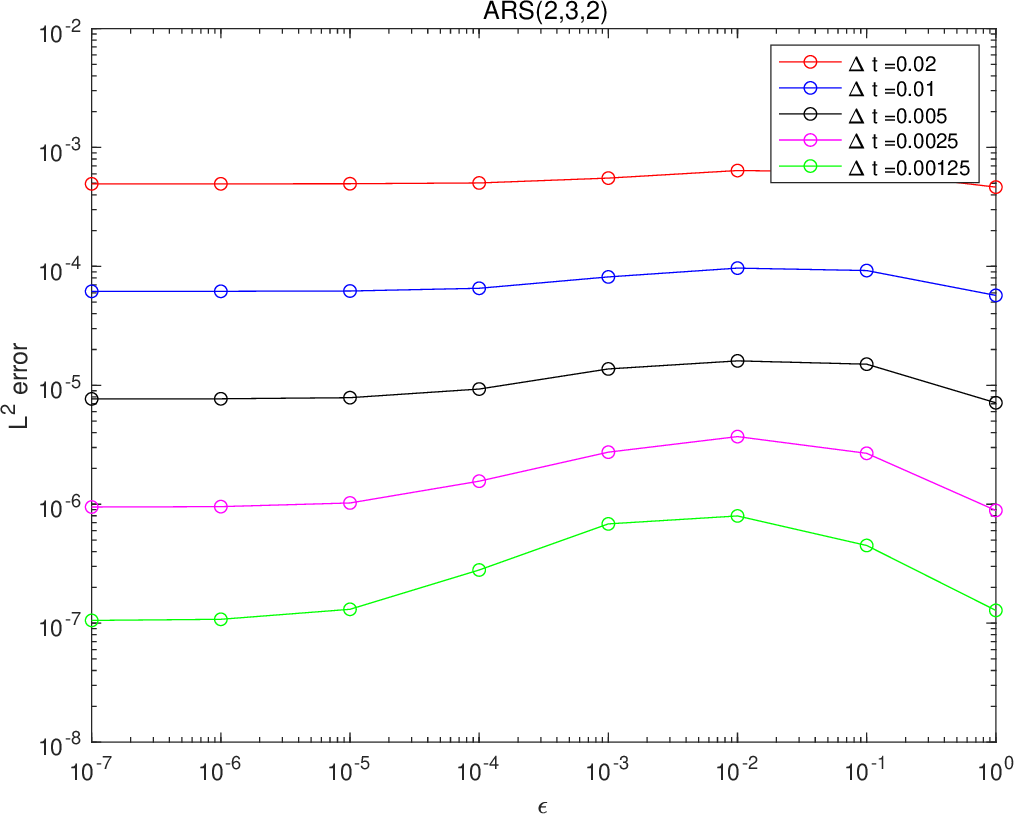}}
\subfigure{\includegraphics[width=0.45\textwidth]{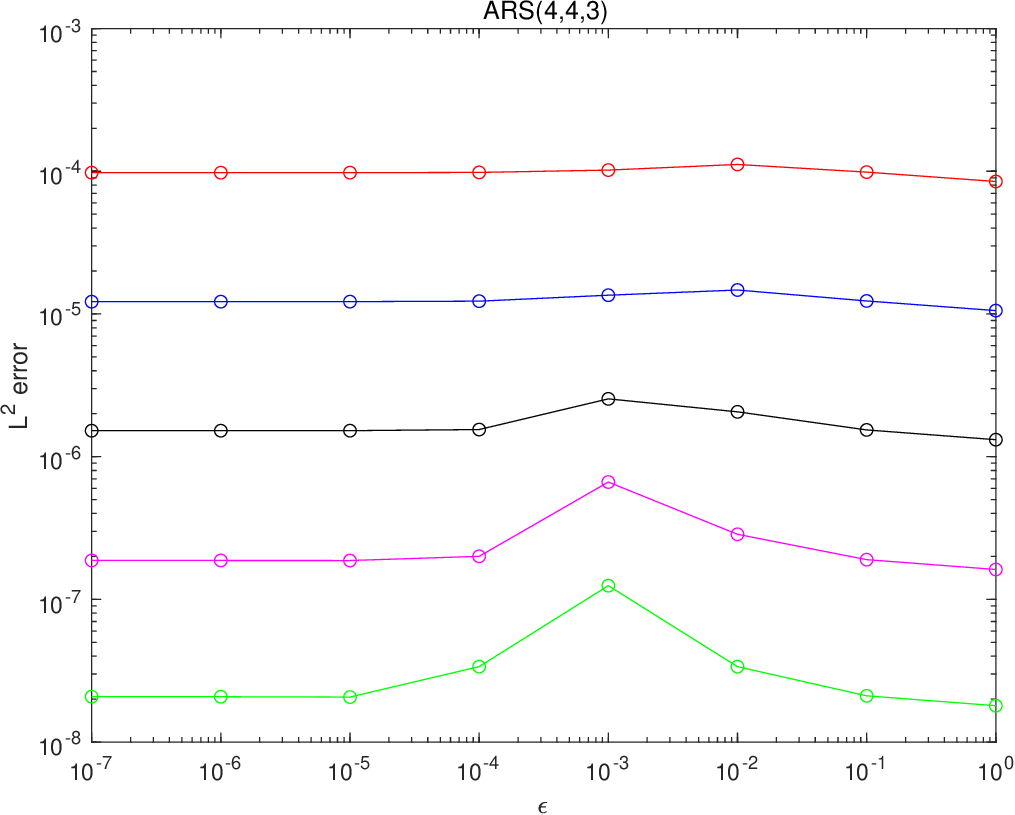}}
\subfigure{\includegraphics[width=0.45\textwidth]{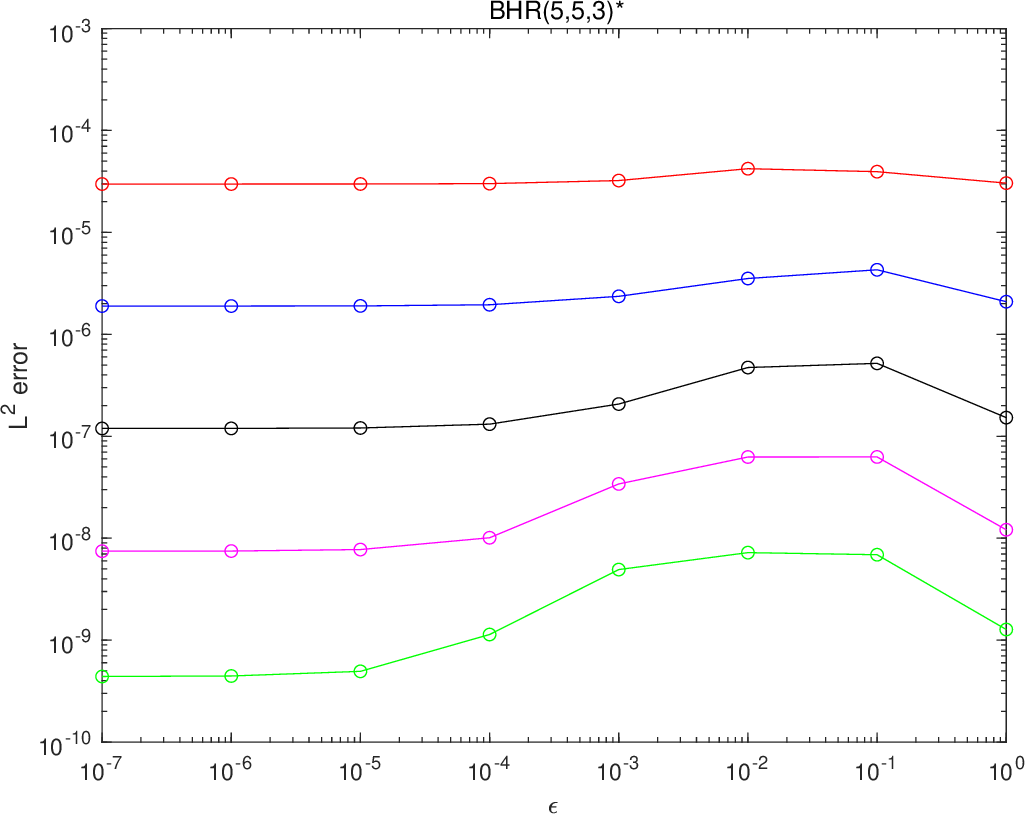}}
\caption{Linearized Grad's moment system. The $L^2$ error of the solutions computed by IMEX-RK schemes. Top left, top right, bottom left and bottom right figures: ARS(2,2,2), ARS(2,3,2), ARS(4,4,3), BHR(5,5,3)* schemes, respectively. In these four sufigures, horizontal axis ranges from $1e-7$ to $1$, and different curves represent different values of $\Delta t$ as shown in the top left figure.}
\label{fig:LinearMoment-RK}
\end{figure}

\begin{figure}[h] 
    \centering 
    \includegraphics[width=0.5\linewidth]{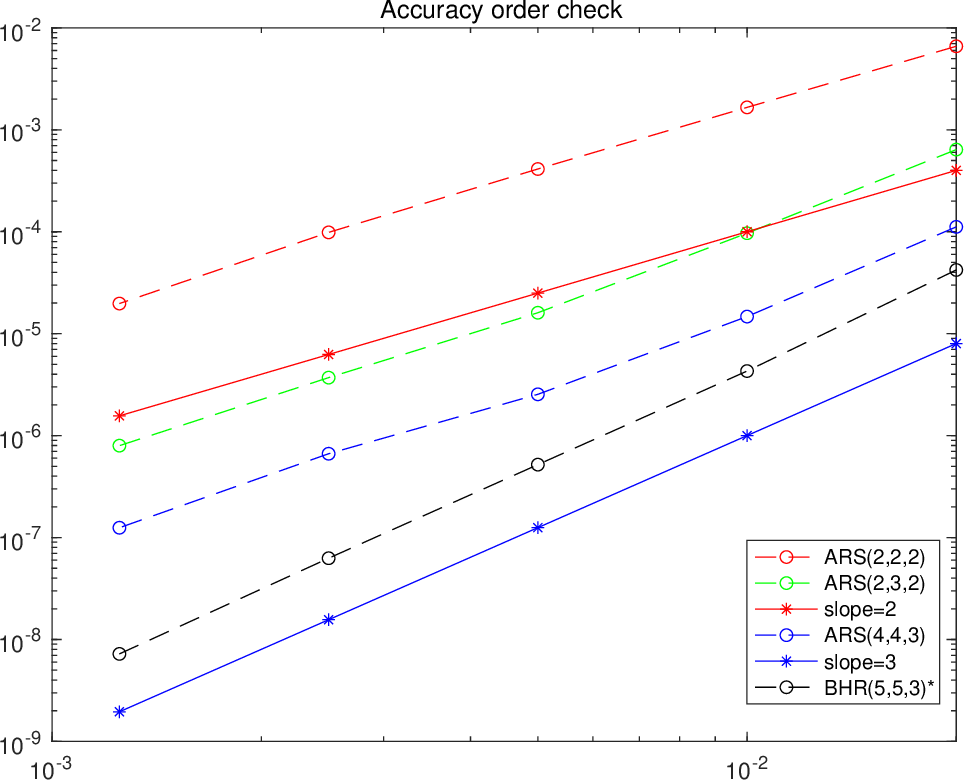} 
    \caption{Linearized Grad's moment system. The accuracy order for these four IMEX-RK schemes, is obtained as follows: for each scheme, take the maximal $L^2$ error among all values of for a fixed $\Delta t$.} 
    \label{fig:LinearMoment-order} 
\end{figure}

\section*{Acknowledgments}

We would like to thank the anonymous reviewers for their invaluable comments and questions which greatly improved the quality of this paper.

\appendix
\section{Appendix}\label{sec:appendix}

\subsection{Lemma \ref{lem:b-phi-2}} \label{appendix:proof_lemma}
For formula \eqref{formula-bv-appendix}, we state the following Lemma.
\begin{lem}\label{lem:b-phi-2}
    The condition \eqref{condition-5.3-vanishing} implies that for all $1\leq l, q \leq m \in \mathbb{N}^+$ and $1\leq j \leq s\in \mathbb{N}^+$, $\sum_{j=1}^s\tilde{b}^{j} \Phi_{(l-1)s+j,~(q-1)s+2} = 0$ and $\sum_{j=1}^sh_{sj} \Phi_{(l-1)s+j,~(q-1)s+2} = 0$ for any $\mu>0$.
\end{lem}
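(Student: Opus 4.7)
The plan is to exploit the block-diagonal structure hidden in the definition \eqref{eq:def-matrix-Phi} of $\bm{\Phi}$, and thereby reduce the claim to an elementary forward-substitution argument on a single $s\times s$ triangular system. Since $\bm{\Lambda}$ is diagonal with entries $\Lambda_{pp}$, the $ms\times ms$ matrix $\bm{I}_{ms}-\mu(\bm{\Lambda}\otimes\bm{H})$ is block diagonal with $s\times s$ blocks $\bm{I}_s-\mu\Lambda_{pp}\bm{H}$ along the diagonal. Its inverse is therefore block diagonal with blocks $(\bm{I}_s-\mu\Lambda_{pp}\bm{H})^{-1}$. Using \eqref{prop: Kronecker-product-1}, the $(l-1)s+j$, $(q-1)s+r$ entry of $\bm{\Phi}$ can be written as
\[
\Phi_{(l-1)s+j,\,(q-1)s+r}=\sum_{p=1}^{m}(G^{-1})_{lp}\,G_{pq}\,\bigl[(\bm{I}_s-\mu\Lambda_{pp}\bm{H})^{-1}\bigr]_{j,r}.
\]
Hence it suffices to prove that, for every scalar $\lambda\in\mathbb{R}$ and every $\mu>0$,
\[
\sum_{j=1}^{s}\tilde{b}_j\,\bigl[(\bm{I}_s-\mu\lambda\bm{H})^{-1}\bigr]_{j,2}=0,\qquad
\sum_{j=1}^{s}h_{sj}\,\bigl[(\bm{I}_s-\mu\lambda\bm{H})^{-1}\bigr]_{j,2}=0.
\]

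The next step is to set $\bm{y}=(\bm{I}_s-\mu\lambda\bm{H})^{-1}\bm{e}_2$, so that the two sums equal $\tilde{\bm{b}}^{T}\bm{y}$ and $\bm{H}_s\bm{y}$ respectively, and to analyze the lower-triangular system $(\bm{I}_s-\mu\lambda\bm{H})\bm{y}=\bm{e}_2$, which reads
\[
(1-\mu\lambda h_{ii})\,y_i-\mu\lambda\sum_{j=1}^{i-1}h_{ij}\,y_j=\delta_{i,2},\qquad i=1,\dots,s.
\]
Because the scheme is of type CK we have $h_{11}=0$, so $y_1=0$. The $i=2$ equation then gives $y_2=(1-\mu\lambda h_{22})^{-1}\neq 0$. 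For $i\geq 3$, the equation reduces to $(1-\mu\lambda h_{ii})\,y_i=\mu\lambda\sum_{j=1}^{i-1}h_{ij}y_j$, and the vanishing coefficient condition \eqref{condition-5.3-vanishing} (namely $h_{i,2}=0$ for $i\geq 3$) removes the contribution of $y_2$ entirely. A straightforward forward induction starting from $y_1=0$ then shows $y_i=0$ for all $i\geq 3$, so that $\bm{y}=y_2\bm{e}_2$.

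Substituting back, the two sums reduce to $\tilde{b}_2\,y_2$ and $h_{s,2}\,y_2$, both of which vanish by \eqref{condition-5.3-vanishing} (the second one requires $s\geq 3$, which is the relevant case). The main obstacle, and really the only non-mechanical part, is recognizing that after vectorization the Kronecker product structure decouples into $m$ independent $s\times s$ problems, each sharing exactly the same triangular recurrence; once that is in hand, the vanishing coefficient condition is precisely what is needed to kill the propagation of $y_2$ through the stages, and the conclusion follows immediately. Note that the argument is uniform in $\mu>0$, which is why the lemma holds for all relaxation regimes simultaneously.
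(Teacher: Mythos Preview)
Your proof is correct, and in fact cleaner than the paper's. Both arguments ultimately exploit the same structural facts---the block-diagonal nature of $\bm{I}_{ms}-\mu(\bm{\Lambda}\otimes\bm{H})$ coming from the diagonality of $\bm{\Lambda}$, and the vanishing of the $(i,2)$-entries of $\bm{H}$ for $i\neq 2$---but they proceed quite differently. The paper keeps the problem at the $ms\times ms$ level and works through the finite Neumann expansion $(\bm{I}_{ms}-\mu\bm{\Lambda}\otimes\bm{H})^{-1}=\bigl(\bm{I}_{ms}+\sum_{i=1}^{ms-1}(\mu\bm{D}^{-1}(\bm{\Lambda}\otimes\bm{K}))^{i}\bigr)\bm{D}^{-1}$, computing individual entries of the summands and arguing that the condition \eqref{condition-5.3-vanishing} kills the second-column contributions term by term. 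Your route instead first collapses everything to a single $s\times s$ triangular solve $(\bm{I}_s-\mu\lambda\bm{H})\bm{y}=\bm{e}_2$ for a scalar parameter $\lambda$, and then the forward-substitution induction shows directly that $\bm{y}$ is supported on the second coordinate, after which $\tilde{b}_2=h_{s,2}=0$ finishes. This buys you a shorter and more transparent argument with no index bookkeeping on the Neumann terms. One small quibble: you write ``for every scalar $\lambda\in\mathbb{R}$,'' but invertibility of $\bm{I}_s-\mu\lambda\bm{H}$ is only needed (and only guaranteed) for the actual eigenvalues $\lambda=\Lambda_{pp}\le 0$; since $h_{ii}\ge 0$ this gives $1-\mu\lambda h_{ii}\ge 1$, so the forward substitution is always well-posed in the cases that matter.
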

\begin{proof}

Since $\bm{\Phi} = (\bm{G} \otimes \bm{I}_{s})^{-1}\left(\bm{I}_{ms} - \mu (\bm{\Lambda} \otimes \bm{H})\right)^{-1}(\bm{G} \otimes \bm{I}_{s}) \in \mathbb{R}^{ms \times ms}$, we have
\begin{equation}\nonumber
    \begin{aligned}
        \Phi_{(l-1)s+j,~(q-1)s+r} = \sum_{i, k=1}^{ms}(\bm{G} \otimes \bm{I}_{s})^{-1}_{(l-1)s+j, i}\left(\bm{I}_{ms} - \mu (\bm{\Lambda} \otimes \bm{H})\right)^{-1}_{i,k}(\bm{G} \otimes \bm{I}_{s})_{k,(q-1)s+r}.
    \end{aligned}
\end{equation}
Here $\bm{G}\in \mathbb{R}^{m\times m}$ is an invertible matrix and $\bm{\Lambda}\in \mathbb{R}^{m\times m}$ is a diagonal matrix, which satisfy
\begin{equation}\nonumber
    \bm{A}_0 = \bm{G}^T \bm{G}, \qquad \bm{A}_0 \bm{Q} = \bm{G}^T \bm{\Lambda} \bm{G}.
\end{equation}

Therefore, we have
\begin{equation}\nonumber
    \begin{aligned}
        &\sum_{j=1}^s\tilde{b}^{j} \Phi_{(l-1)s+j,~(q-1)s+r} \\
        ={}& \sum_{j=1}^s \sum_{i, k=1}^{ms}\tilde{b}^{j} (\bm{G} \otimes \bm{I}_{s})^{-1}_{(l-1)s+j, i}\left(\bm{I}_{ms} - \mu (\bm{\Lambda} \otimes \bm{H})\right)^{-1}_{i,k}(\bm{G} \otimes \bm{I}_{s})_{k,(q-1)s+r}\\
        ={}& \sum_{j, z, \beta=1}^s\sum_{y,\alpha=1}^m \tilde{b}^{j} (\bm{G} \otimes \bm{I}_{s})^{-1}_{(l-1)s+j, (y-1)s+z}\left(\bm{I}_{ms} - \mu (\bm{\Lambda} \otimes \bm{H})\right)^{-1}_{(y-1)s+z,(\alpha-1)s+\beta}(\bm{G} \otimes \bm{I}_{s})_{(\alpha-1)s+\beta,(q-1)s+r}\\
        ={}& \sum_{j, z, \beta=1}^s\sum_{y,\alpha=1}^m \tilde{b}^{j} G^{-1}_{l,y} \delta_{j}^{z}\left(\bm{I}_{ms} - \mu (\bm{\Lambda} \otimes \bm{H})\right)^{-1}_{(y-1)s+z,(\alpha-1)s+\beta} G_{\alpha,q} \delta_{\beta}^{r}\\
        ={}& \sum_{j=1}^s\sum_{y,\alpha=1}^m \tilde{b}^{j} G^{-1}_{l,y} \left(\bm{I}_{ms} - \mu (\bm{\Lambda} \otimes \bm{H})\right)^{-1}_{(y-1)s+j,(\alpha-1)s+r} G_{\alpha,q} .
    \end{aligned}
\end{equation}
In the fourth equality, we denote $i=(y-1)s+z, k=(\alpha-1)s+\beta$ and use the property \eqref{prop: Kronecker-product-1}
\begin{equation}\nonumber
    (\bm{A}\otimes \bm{B})_{(\alpha-1)s+\beta,(q-1)s+r} = a_{\alpha, q}b_{\beta, r}.
\end{equation}
According to \eqref{equ:phi-ana-form}, we have
 \begin{equation}\nonumber
        \begin{aligned}
            (\bm{I}_{ms}  - \mu \bm{\Lambda}\otimes \bm{H})^{-1} = \brac{\bm{I}_{ms}  + \sum_{i=1}^{ms-1}\brac{\mu\bm{D}^{-1}(\bm{\Lambda}\otimes \bm{K})}^i } \bm{D}^{-1}.
        \end{aligned}
\end{equation}
Here $\bm{D} = \bm{I}_{ms}  - \mu \bm{\Lambda} \otimes \tilde{\bm{D}}$ with $\tilde{\bm{D}} = \diag\{0, h_{22}, \cdots, h_{ss}\}$ and $\bm{K}$ as the diagonal and strictly lower triangular parts of $\bm{H}$. 
Since $\bm{\Lambda}$ and $\tilde{\bm{D}}$ are diagonal matrices, as stated in the proof of Lemma \ref{lemma:muH}, we can obtain
\begin{equation}\nonumber
    \begin{aligned}
           \brac{\bm{D}^{-1}(\bm{\Lambda}\otimes \bm{K})}_{(y-1)s+j,(p-1)s+\beta}= \sum_{q=1}^m\sum_{t=1}^s \frac{\delta^q_y \delta^t_j \Lambda_{p,p}\delta_q^p k_{t, \beta}}{1 - \mu \Lambda_{y,y} \tilde{d}_{j,j}}= \frac{\Lambda_{p,p}\delta_y^p k_{j, \beta}}{1 - \mu \Lambda_{y,y} \tilde{d}_{j,j}}
    \end{aligned}
\end{equation}
and 
\begin{equation}\nonumber
    \begin{aligned}
            (\bm{D}^{-1})_{(p-1)s+\beta,(\alpha-1)s+r}= \frac{\delta_p^\alpha \delta_\beta^r}{1 - \mu \Lambda_{p,p}  \tilde{d}_{r,r}}.
    \end{aligned}
\end{equation}
Therefore, we have
\begin{equation}\nonumber
    \begin{aligned}
        &\sum_{j=1}^s\tilde{b}^{j} \Phi_{(l-1)s+j,~(q-1)s+r}\\
        ={}&\sum_{j=1}^s\sum_{y,\alpha=1}^m \tilde{b}^{j} G^{-1}_{l,y} \left(\bm{I}_{ms} - \mu (\bm{\Lambda} \otimes \bm{H})\right)^{-1}_{(y-1)s+j,(\alpha-1)s+r} G_{\alpha,q}\\
        ={}&\sum_{j=1}^s\sum_{y,\alpha=1}^m G^{-1}_{l,y} \tilde{b}^{j} \brac{\delta^j_r +\sum_{i=1}^{ms-1}\brac{\frac{\Lambda_{p,p} k_{j, r}}{1 - \mu \Lambda_{y,y} \tilde{d}_{j,j}}}^i} \frac{\delta^y_\alpha }{1 - \mu \lambda_{p,p}\tilde{d}_{r,r}} G_{\alpha,q}\\
        ={}&\sum_{j=1}^s\sum_{y=1}^m G^{-1}_{l,y} \tilde{b}^{j} \brac{\delta^j_r +\sum_{i=1}^{ms-1}\brac{\frac{\Lambda_{p,p} k_{j, r}}{1 - \mu \Lambda_{y,y} \tilde{d}_{j,j}}}^i} \frac{G_{y,q}}{1 - \mu \lambda_{p,p}\tilde{d}_{r,r}} \\
        ={}&\sum_{y=1}^m G^{-1}_{l,y} \brac{\tilde{b}^{r} +\sum_{j=1}^s\tilde{b}^{j} \sum_{i=1}^{ms-1}\brac{\frac{\Lambda_{p,p} k_{j, r}}{1 - \mu \Lambda_{y,y} \tilde{d}_{j,j}}}^i} \frac{G_{y,q}}{1 - \mu \lambda_{p,p}\tilde{d}_{r,r}}.
    \end{aligned}
\end{equation}
Due to conditions \eqref{condition-5.3-vanishing}, we know that $\tilde{b}^2=0$ and $k_{j,2}=h_{j,2}=0, j=1,\cdots, s$. When $r=2$, we get
\begin{equation}\nonumber
    \begin{aligned}
        \sum_{j=1}^s\tilde{b}^{j} \Phi_{(l-1)s+j,~(q-1)s+2}
        ={}&\sum_{y=1}^m G^{-1}_{l,y} \brac{\tilde{b}^{2} +\sum_{j=1}^s\tilde{b}^{j} \sum_{i=1}^{ms-1}\brac{\frac{\Lambda_{p,p} k_{j, 2}}{1 - \mu \Lambda_{y,y} \tilde{d}_{j,j}}}^i} \frac{G_{y,q}}{1 - \mu \lambda_{p,p}\tilde{d}_{2,2}} \equiv 0 \\
    \end{aligned}
\end{equation}
Therefore the coefficients in $\tilde{b}^{j} v_{jl}$ which involved $e_{2q}$ are all zero. Replacing $\tilde{b}^{j}$ with $h_{sj}$ yields the same result. Hence we get the conclusion.
\end{proof}

\subsection{Some IMEX-RK schemes}\label{subsec:some-imex-rk-scheme}
In this part, we present some examples of type CK IMEX-RK schemes. They are also numerically tested in Section \ref{sec:numerical-test}.
\begin{itemize}
    \item $\operatorname{ARS}(2,2,2)$ \cite{ascher1997}:
\begin{equation}\nonumber
	\begin{tabular}{l|lll}
0 & 0 & 0 & 0 \\
$\gamma$ & $\gamma$ & 0 & 0 \\
1 & $\delta$ & $1-\delta$ & 0 \\
\hline & $\delta$ & $1-\delta$ & 0
\end{tabular}
\qquad 
\begin{tabular}{l|lll}
0 & 0 & 0 & 0 \\
$\gamma$ & 0 & $\gamma$ & 0 \\
1 & 0 & $1-\gamma$ & $\gamma$ \\
\hline & 0 & $1-\gamma$ & $\gamma$
\end{tabular}
\end{equation}
$\gamma=1-\frac{\sqrt{2}}{2} \approx 0.292893218813452$, \quad $\delta=1-\frac{1}{2 \gamma} \approx-0.707106781186548$.

 \item $\operatorname{ARS}(2,3,2)$ \cite{ascher1997}:
\begin{equation}\nonumber
	\begin{tabular}{l|lll}
0 & 0 & 0 & 0 \\
$\gamma$ & $\gamma$ & 0 & 0 \\
1 & $\delta$ & $1-\delta$ & 0 \\
\hline & $ 0 $ & $ 1-\gamma $ & $\gamma$
\end{tabular}
\qquad 
\begin{tabular}{l|lll}
0 & 0 & 0 & 0 \\
$\gamma$ & 0 & $\gamma$ & 0 \\
1 & 0 & $1-\gamma$ & $\gamma$ \\
\hline & 0 & $1-\gamma$ & $\gamma$
\end{tabular}
\end{equation}
$\gamma=1-\frac{\sqrt{2}}{2} \approx 0.292893218813452$, \quad $\delta=-2\sqrt{2}/3  \approx-0.942809041582063$.

\item $\operatorname{ARS}(4,4,3)$ \cite{ascher1997}:
\begin{equation}\nonumber
\begin{tabular}{l|lllll}
0 & 0 & 0 & 0 & 0 & 0 \\
$1 / 2$ & $1 / 2$ & 0 & 0 & 0 & 0 \\
$2 / 3$ & $11 / 18$ & $1 / 18$ & 0 & 0 & 0 \\
$1 / 2$ & $5 / 6$ & $-5 / 6$ & $1 / 2$ & 0 & 0 \\
1 & $1 / 4$ & $7 / 4$ & $3 / 4$ & $-7 / 4$ & 0 \\
\hline & $1 / 4$ & $7 / 4$ & $3 / 4$ & $-7 / 4$ & 0
\end{tabular}
\qquad 
\begin{tabular}{l|lllll}
0 & 0 & 0 & 0 & 0 & 0 \\
$1 / 2$ & 0 & $1 / 2$ & 0 & 0 & 0 \\
$2 / 3$ & 0 & $1 / 6$ & $1 / 2$ & 0 & 0 \\
$1 / 2$ & 0 & $-1 / 2$ & $1 / 2$ & $1 / 2$ & 0 \\
1 & 0 & $3 / 2$ & $-3 / 2$ & $1 / 2$ & $1 / 2$ \\
\hline & 0 & $3 / 2$ & $-3 / 2$ & $1 / 2$ & $1 / 2$
\end{tabular}
\end{equation}
\item $\operatorname{BHR}(5,5,3)^*$ (a variant of the $\operatorname{BHR}(5,5,3)$ scheme in \cite{boscarino2009siam}):
\begin{equation}\nonumber
\begin{tabular}{c|ccccc}
0 & 0 & 0 & 0 & 0 & 0 \\
$2 \gamma$ & $2 \gamma$ & 0 & 0 & 0 & 0 \\
$2 \gamma$ & $\gamma$ & $\gamma$ & 0 & 0 & 0 \\
$c_4$ & $c_4-\frac{c_4^2}{4 \gamma}$ & 0 & $\frac{c_4^2}{4 \gamma}$ & 0 & 0 \\
1 & $1+b_3-\tilde{a}_{53}-\tilde{a}_{54}$ & $-b_3$ & $\tilde{a}_{53}$ & $\tilde{a}_{54}$ & 0 \\
\hline & $1-b_3-b_4-\gamma$ & 0 & $b_3$ & $b_4$ & $\gamma$
\end{tabular} 
\qquad 
\begin{tabular}{c|ccccc}
0 & 0 & 0 & 0 & 0 & 0 \\
$2 \gamma$ & $\gamma$ & $\gamma$ & 0 & 0 & 0 \\
$2 \gamma$ & $\gamma$ & 0 & $\gamma$ & 0 & 0 \\
$c_4$ & $\frac{3 c_4}{2}-\frac{c_4^2}{4 \gamma}-\gamma$ & 0 & $\frac{c_4^2}{4 \gamma}-\frac{c_4}{2}$ & $\gamma$ & 0 \\
1 & $1-b_3-b_4-\gamma$ & 0 & $b_3$ & $b_4$ & $\gamma$ \\
\hline & $1-b_3-b_4-\gamma$ & 0 & $b_3$ & $b_4$ & $\gamma$
\end{tabular}
\end{equation}
\begin{equation}\nonumber
\begin{aligned}
    &\gamma=0.435866521508460, \quad c_4 =1.5, \quad  b_3=0.362863385578740,\\
    &\tilde{a}_{53}=1.195970114894582, \quad \tilde{a}_{54}=-0.150831109536248, \quad b_4=-0.168124349878957.
\end{aligned}
\end{equation}
\end{itemize}

\section*{Data Availability}

Data will be made available upon reasonable request.

\section*{Conflict of interest}

The authors declare that they have no conflict of interest.

\end{document}